\pdfoutput=1
\documentclass[11pt]{article} 
\usepackage{xspace}

\newcommand{\mathprog}[1]{foo}
\newcommand{\arxiv}[1]{ba}
\newcommand{\nips}[1]{bar}
\ifdefined \SubmitToMathprog
\renewcommand{\mathprog}[1]{#1}%
\renewcommand{\arxiv}[1]{}%
\renewcommand{\nips}[1]{}%
\else%
\ifdefined \NipsWokshopFTW
	\renewcommand{\mathprog}[1]{}%
	\renewcommand{\arxiv}[1]{}%
	\renewcommand{\nips}[1]{#1}%
\else
	\renewcommand{\mathprog}[1]{}%
	\renewcommand{\arxiv}[1]{#1}%
	\renewcommand{\nips}[1]{}%
\fi%
\fi%

\mathprog{\RequirePackage{fix-cm}}
\mathprog{\usepackage{newtxtext}} %
\usepackage{latexsym}

\usepackage{enumerate}
\usepackage{fancyhdr,amsmath,amssymb,url}
\arxiv{\usepackage{amsthm}}
\nips{\usepackage{amsthm}}

\usepackage{booktabs}
\usepackage{tabularx}

\usepackage{float}
\usepackage{multirow}
\usepackage[normalem]{ulem}
\arxiv{\usepackage{accents}}
\usepackage[]{graphicx}				%
\usepackage{amssymb}
\usepackage{color}
\usepackage{xspace}
\usepackage[numbers,square]{natbib}
\usepackage{array}

\usepackage{mathtools}
\usepackage{thmtools}
\usepackage{thm-restate}

\usepackage{algorithm}
\usepackage[noend]{algpseudocode}

\usepackage{etoolbox}

\arxiv{\usepackage[top=1in, right=1in, left=1in, bottom=1in]{geometry}}
\mathprog{\usepackage[top=1.25in, right=1.25in, left=1.25in, 
bottom=1.25in]{geometry}}
\arxiv{\usepackage[hidelinks]{hyperref}}
\mathprog{\usepackage[colorlinks = true,
	linkcolor = blue,
	urlcolor  = blue,
	citecolor = blue,
	anchorcolor = blue]{hyperref}}

\usepackage{placeins}

\mathprog{\smartqed}
\mathprog{\usepackage{geometry}}

\usepackage{tikz}

\usepackage{xfrac}
\usepackage[nice]{nicefrac}

\mathprog{
\let\oldproof\proof
\let\oldendproof\endproof
\def\proof{\begingroup \oldproof}
\def\endproof{\qed \oldendproof \endgroup}
}

\ifdefined\SubmitToMathprog
\makeatletter
\newcommand\subparagraph{%
	\@startsection{subparagraph}{5}
	{\parindent}
	{3.25ex \@plus 1ex \@minus .2ex}
	{-1em}
	{\normalfont\normalsize\bfseries}}
\makeatother
\usepackage{titlesec}
\let\subparagraph\relax %
\titleformat{\section}{\fontsize{12.5pt}{\baselineskip}\bfseries}{\thesection}{0.5em}{}
\titleformat{\subsection}{\fontsize{11pt}{\baselineskip}\bfseries}{\thesubsection}{0.5em}{}
\titleformat{\paragraph}[runin]{\normalfont\bfseries}{\theparagraph}{}{}[~~~]
\fi

\arxiv{
\theoremstyle{plain}
\newtheorem{conjecture}{Conjecture}
\newtheorem{theorem}{Theorem}
\newtheorem{lemma}{Lemma}
\newtheorem{claim}{Claim}
\newtheorem{proposition}{Proposition}
\newtheorem{corollary}{Corollary}
\newtheorem{definition}{Definition}
\newtheorem{assumption}{Assumption}

\newcounter{remark}

\newenvironment{remark}[1][]{
	\refstepcounter{remark}
	\ifthenelse{\isempty{#1}}{%
		\noindent \textbf{Remark \theremark:}\hspace*{.05em}
	}{%
		\noindent \textbf{Remark \theremark} ({#1})\textbf{:}\hspace*{.05em}
	}
}{%
	$\clubsuit$ \bigskip
}

\theoremstyle{remark}
\newtheorem{example}{Example}
}

\nips{
	\theoremstyle{plain}
	
	\newtheorem{theorem}{Theorem}
	\newtheorem{lemma}{Lemma}

	\newtheorem{definition}{Definition}

	\newcounter{remark}

	\theoremstyle{remark}

}

\newtheorem{observation}{Observation}
\newenvironment{proof-of-lemma}[1][{}]{%
  \noindent\emph{Proof of Lemma 
  {#1}.}\hspace*{.5em}}{\arxiv{\qed\medskip\\}\mathprog{\qed}}

\newenvironment{customlemma}[1]
{\innercustomlemma}
{\endinnercustomlemma}

\newcommand{\mc}[1]{\mathcal{#1}}

\newcommand{\wt}[1]{\widetilde{#1}}  %
\newcommand{\norm}[1]{\left\|{#1}\right\|} %
\newcommand{\normbig}[1]{\big\|{#1}\big\|} %
\newcommand{\norms}[1]{\|{#1}\|} %

\newcommand{\inner}[2]{\langle #1, #2\rangle}
\newcommand{\innerbig}[2]{\left\langle #1, #2\right\rangle}

\newcommand{\absinner}[2]{|\inner{#1}{#2}|}
\newcommand{\absinnerbig}[2]{\left|\innerbig{#1}{#2}\right|}

\newcommand{\R}{\mathbb{R}}
\newcommand{\N}{\mathbb{N}}

\newcommand{\minimize}{\mathop{\rm minimize}}
\newcommand{\argmin}{\mathop{\rm arg\hspace{.1em}min}}

\newcommand{\support}[1]{\mathop{\mathrm{supp}} \left\{#1\right\}}
\newcommand{\supports}[1]{\mathop{\mathrm{supp}} \{#1\}}

\newcommand{\half}{\frac{1}{2}}

\newcommand{\defeq}{:=}

\newcommand{\openright}[2]{\left[{#1}, {#2}\right)}

\def\ie{{i.e.}\ }
\def\eg{{e.g.}\ }

\newcommand{\unitvec}[1]{\frac{#1}{\norm{#1}}}
\newcommand{\unitvecsmall}[1]{{#1}/{\norm{#1}}}

\newcommand{\Sm}[1]{L_{#1}}
\newcommand{\SmGrad}{\Sm{1}}
\newcommand{\SmHess}{\Sm{2}}

\newcommand{\Smp}{\Sm{p}}

\newcommand{\smC}[1]{\ell_{#1}}
\newcommand{\smChat}[1]{\hat{\ell}_{#1}}
\newcommand{\smCvar}[1]{\tilde{\ell}_{#1}}

\newcommand{\ind}[1]{^{(#1)}}
\newcommand{\indic}[1]{1_{\left({#1}\right)}}

\newcommand{\DeltaF}{\Delta}

\newcommand{\FclassBlank}{\mathcal{F}}
\newcommand{\Fclass}[1]{\mathcal{F}_{#1}(\DeltaF, \Sm{#1})}

\newcommand{\FclassD}[1]{\mc{F}^{\rm dist}_{#1}(D, \Sm{#1})}
\newcommand{\FclassDTwo}[2]{\mc{F}^{\rm dist}_{#1, #2}(D, \Sm{#1},  \Sm{#2})}

\newcommand{\ConvClassCustom}[1]{\mc{K}_1\left(\DeltaF, \SmGrad\right)}

\newcommand{\tensordim}[2]{\otimes^{#1} {#2}}
\newcommand{\opnorm}[1]{\norm{#1}_{\rm op}}
\newcommand{\opnorms}[1]{\norms{#1}_{\rm op}}
\newcommand{\opt}{^\star}
\newcommand{\<}{\langle}
\renewcommand{\>}{\rangle}
\newcommand{\deriv}[1]{\nabla^{{#1}}}

\newcommand{\alg}{\mathsf{A}}
\newcommand{\algzr}{\mathsf{Z}_{\alg}}

\newcommand{\AlgZR}{\mathcal{A}_{\textnormal{\textsf{zr}}}}
\newcommand{\AlgDet}{\mathcal{A}_{\textnormal{\textsf{det}}}}
\newcommand{\AlgRand}{\mathcal{A}_{\textnormal{\textsf{rand}}}}

\newcommand{\TimeEps}[3][\epsilon]{\mathsf{T}_{#1}\big(#2, #3\big)}
\newcommand{\CompEps}[2]{\mathcal{T}_{\epsilon}\big(#1, #2\big)}

\newcommand{\compactfunc}{\Psi}
\newcommand{\gausscdf}{\Phi}
\newcommand{\gausspdf}{\phi}

\newcommand{\orthogonalgroup}{\mathsf{O}}

\newcommand{\uniform}{\mathsf{Uni}}

\newcommand{\Otil}[1]{\wt{O}\left(#1\right)}

\newcommand{\wb}[1]{\overline{#1}}

\newcommand{\Dim}{d}

\newcommand{\T}{T}
\newcommand{\fhard}{\bar{f}_T}
\newcommand{\hhard}{\bar{h}_T}
\newcommand{\hhardVar}{{h}}

\newcommand{\fhardRot}{\tilde{f}_{T;U}}
\newcommand{\fhardBound}{\hat{f}_{T;U}}
\newcommand{\fhardBoundI}{\hat{f}_{T;I}}

\newcommand{\countset}{\mathsf{C}}

\newcommand{\sign}{\mathop \mathrm{sign}}
\newcommand{\grad}{\nabla}
\newcommand{\hess}{\nabla^2}

\newcommand{\floor}[1]{\left\lfloor #1 \right\rfloor}
\newcommand{\ceil}[1]{\left\lceil #1 \right\rceil}

\newcommand{\hide}[1]{ }

\newcommand{\E}{\mathbb{E}}
\newcommand{\del}{\partial}
\newcommand{\tr}{\mathrm{tr}}
\renewcommand{\P}{\mathbb{P}}

\arxiv{
\theoremstyle{plain}

\theoremstyle{plain}

\theoremstyle{definition}

\theoremstyle{plain}

}

\makeatletter
\newcommand{\hightarget}[1]{\Hy@raisedlink{\hypertarget{#1}{}}}
\makeatother

\usepackage{xr}
\usepackage{multirow}

\externaldocument{lower-bounds-first-order}

\mathprog{\journalname{Mathematical Programming}}

\begin{document}

\arxiv{
\title{Lower Bounds for Finding Stationary Points I}

\author{Yair Carmon ~~~ John C.\ Duchi ~~~ Oliver Hinder ~~~ Aaron Sidford \\
  \texttt{\{\href{mailto:yairc@stanford.edu}{yairc},%
    \href{mailto:jduchi@stanford.edu}{jduchi},%
    \href{mailto:ohinder@stanford.edu}{ohinder},%
    \href{mailto:sidford@stanford.edu}{sidford}\}@stanford.edu}}
\date{}
}

\mathprog{
\title{Lower Bounds for Finding Stationary Points I
	\thanks{OH was supported by the PACCAR INC fellowship. YC and JCD 
	were partially supported by the SAIL-Toyota Center for AI Research, 
	NSF-CAREER award 1553086, and a Sloan Foundation Fellowship in 
	Mathematics. YC was partially supported by the Stanford Graduate 
	Fellowship and the Numerical Technologies Fellowship.}
}

\author{Yair Carmon  \and
            John C.\ Duchi   \and
            Oliver Hinder   \and
            Aaron Sidford }

\authorrunning{Carmon, Duchi, Hinder and Sidford} %

\institute{Y. Carmon \at
	Department of Electrical Engineering, Stanford University, Stanford, CA 
	94305, USA \\
	\email{yairc@stanford.edu}           %
	\and
	J.C. Duchi \at
	Departments of Statistics and Electrical Engineering, Stanford University, 
	Stanford, CA 94305, USA
	\email{jduchi@stanford.edu}
	\and
	O. Hinder \at
	Department of Management Science and Engineering, Stanford University, 
	Stanford, CA 94305, USA, Stanford University, Stanford, CA 94305, USA
	\email{ohinder@stanford.edu}
	\and
	A. Sidford \at
	Department of Management Science and Engineering, Stanford University, 
	Stanford, CA 94305, USA, Stanford University, Stanford, CA 94305, USA
	\email{sidford@stanford.edu}
}

\date{Received: December 13, 2017}
}

\maketitle

\begin{abstract}
  We prove lower bounds on the complexity of finding $\epsilon$-stationary
  points (points $x$ such that $\|\nabla f(x)\| \le \epsilon$) of smooth,
  high-dimensional, and potentially non-convex functions $f$.  We consider
  oracle-based complexity measures, where an algorithm is given access to
  the value and all derivatives of $f$ at a query point $x$. We
  show that for any (potentially randomized) algorithm $\mathsf{A}$, there
  exists a function $f$ with Lipschitz $p$th order derivatives such that
  $\mathsf{A}$ requires at least $\epsilon^{-(p+1)/p}$ queries to find an
  $\epsilon$-stationary point.
   Our lower bounds are sharp to within
   constants, and they show that gradient descent,
  cubic-regularized Newton's method, and generalized $p$th order
  regularization are worst-case optimal within their natural function classes.
  \mathprog{
  \keywords{Non-convex optimization \and 
  Information-based complexity \and Dimension-free rates \and  Gradient 
  descent 
  \and Cubic regularization of Newton's method}
   \subclass{90C06 %
   	\and 90C26 %
   	 \and 90C30 %
   	 \and 90C60  %
   	 \and 68Q25} %
   	}
\end{abstract}

\mathprog{\newpage}

\section{Introduction}

Consider the optimization problem
\begin{equation*}
  \minimize_{x \in \R^{\Dim}} ~ f(x)
\end{equation*}
where $f : \R^{\Dim} \rightarrow \R$ is smooth, but possibly
non-convex. In general, it is intractable to even approximately
minimize such $f$~\cite{NemirovskiYu83,MurtyKa87}, so---following an 
established line of research---we consider the problem of
finding an $\epsilon$-stationary point of $f$,
meaning some $x \in \R^d$ such that
\begin{equation}
  \label{eqn:what-we-want}
  \norm{\nabla f(x)} \le \epsilon.
\end{equation}
We prove lower bounds on the number of function and derivative evaluations  required for algorithms to find a point
$x$ satisfying inequality~\eqref{eqn:what-we-want}.  While for arbitrary
smooth $f$, a near-stationary point~\eqref{eqn:what-we-want} is certainly
insufficient for any type of optimality, there are a number of reasons to
study algorithms and complexity for finding stationary points.  In several statistical and engineering problems, including regression models with
non-convex penalties and objectives~\cite{LohWa12, LohWa13}, phase
retrieval~\cite{CandesLiSo15, SunQuWr18}, and non-convex (low-rank)
reformulations of semidefinite programs and matrix
completion~\cite{BurerMo03, KeshavanMoOh10a, BoumalVoBa16}, it is possible
to show that all first- or second-order stationary points are (near) global
minima.  The strong empirical success of local search strategies for such
problems, as well as for neural networks~\cite{LeCunBeHi15}, motivates a
growing body of work on algorithms with strong complexity guarantees for
finding stationary points~\cite{NesterovPo06, BirginGaMaSaTo17,
  CarmonDuHiSi18, AgarwalAlBuHaMa17, CarmonDuHiSi17}.  In contrast to this
algorithmic progress, algorithm-independent lower bounds %
for finding stationary points are largely unexplored.

Even for non-convex functions $f$, it is possible to find
$\epsilon$-stationary points for which the number of function and
derivative evaluations is polynomial in $1/\epsilon$ and the dimension $d$ of $\mathrm{dom}f$. Of
particular interest are methods for which the number of function and
derivative evaluations does not depend on $d$, but instead depends on measures of $f$'s regularity. The
best-known method with such a \emph{dimension-free} convergence guarantee is
classical gradient descent: for every (non-convex) function $f$ with
$\SmGrad$-Lipschitz gradient satisfying $f(x\ind{0}) - \inf_x f(x) \le
\DeltaF$ at the initial point $x\ind{0}$, gradient descent finds an
$\epsilon$-stationary point in at most $2 \SmGrad\DeltaF \epsilon^{-2}$
iterations~\cite{Nesterov04}.  Under the additional assumption that $f$ has
Lipschitz continuous Hessian, our work~\cite{CarmonDuHiSi18} and
\citet{AgarwalAlBuHaMa17} exhibit randomized first-order methods that
find an $\epsilon$-stationary point in time scaling as $\epsilon^{-{7}/{4}}
\log \frac{d}{\epsilon}$ (igoring other problem-dependent constants).
In subsequent work~\cite{CarmonDuHiSi17}, we show a different
deterministic accelerated gradient method that
achieves dimension-free complexity $\epsilon^{-{7}/{4}} \log
\frac{1}{\epsilon}$, and if
$f$ additionally has Lipschitz third derivatives,
then $\epsilon^{-{5}/{3}} \log \frac{1}{\epsilon}$ iterations
suffice to find an $\epsilon$-stationary point.

By evaluation of higher order derivatives, such as the Hessian, it is
possible to achieve better $\epsilon$
dependence. \citeauthor{NesterovPo06}'s cubic regularization of Newton's
method~\cite{NesterovPo06, CartisGoTo10} guarantees
$\epsilon$-stationarity~\eqref{eqn:what-we-want} in $\epsilon^{-3/2}$ iterations,
but each iteration may be expensive
when the dimension $d$ is large. More generally, $p$th-order regularization
methods iterate by sequentially minimizing models of $f$ based on order $p$
Taylor approximations, and \citet{BirginGaMaSaTo17} show that these 
methods
converge in $\epsilon^{-(p+1)/p}$ iterations. Each iteration requires
finding an approximate stationary point of a high-dimensional, potentially non-convex, degree 
$p + 1$
polynomial, which suggests that the methods will be practically challenging
for $p > 2$. The methods nonetheless provide fundamental upper complexity
bounds.

In this paper and its companion~\cite{NclbPartII}, we focus on the converse
problem: providing dimension-free complexity lower bounds for finding
$\epsilon$-stationary points. We show fundamental limits on the best
achievable $\epsilon$ dependence, as well as dependence on other problem
parameters. Together with known upper bounds, our results shed light on
the optimal rates of convergence for finding stationary points.

\subsection{Related lower bounds}

In the case of \emph{convex} optimization, we have a deep understanding of
the complexity of finding $\epsilon$-suboptimal points, that is, $x$ satisfying 
$f(x)
\le f(x\opt) + \epsilon$ for some $\epsilon > 0$, where $x\opt \in \argmin_x
f(x)$. Here we review only the dimension-free optimal rates, as those are
most relevant for our results. Given a point $x\ind{0}$ satisfying
$\norms{x\ind{0} - x\opt} \le D < \infty$, if $f$ is convex with
$\SmGrad$-Lipschitz gradient, Nesterov's accelerated gradient method finds
an $\epsilon$-suboptimal point in $\sqrt{\SmGrad} D \epsilon^{-1/2}$
gradient evaluations, which is optimal even among randomized, higher-order
algorithms~\cite{Nesterov83, NemirovskiYu83, Nesterov04,
  WoodworthSr16}.\footnote{Higher order methods can yield improvements under
  additional smoothness: if in addition $f$ has $\SmHess$-Lipschitz Hessian
  and $\epsilon \le \SmGrad^{7/3}\SmHess^{-4/3}D^{2/3}$, an accelerated
  Newton method achieves the (optimal) rate $(\SmHess
  D^3/\epsilon)^{2/7}$~\cite{ArjevaniShSh17,MonteiroBe13}.} For 
  non-smooth
problems, that is, when $f$ is $\Sm{0}$-Lipschitz, subgradient methods
achieve the optimal rate of $\Sm{0}^2 D^2 / \epsilon^2$ subgradient
evaluations (cf.~\cite{BraunGuPo17, NemirovskiYu83, Nesterov04}).  In Part
II of this paper~\cite{NclbPartII}, we consider the impact of convexity on
the difficulty of finding stationary points using first-order methods.

Globally optimizing smooth non-convex functions is of course intractable:
\citet[\S 1.6]{NemirovskiYu83} show that for functions $f : \R^d \to \R$
with Lipschitz $1$st through $p$th derivatives, and algorithms receiving all
derivatives of $f$ at the query point $x$, the worst case complexity of
finding $\epsilon$-suboptimal points scales at least as $(1 /
\epsilon)^{d/p}$.  This exponential scaling in $d$ shows that dimension-free
guarantees for achieving near-optimality in smooth non-convex functions are
impossible to obtain.

Less is known about lower bounds for finding stationary points for $f : \R^d
\to \R$. \citet{Nesterov12b} proposes lower bounds for finding stationary
points under a box constraint, but his construction does not extend to the
unconstrained case when $f(x\ind{0})-\inf_x f(x)$ is bounded.
\citet{Vavasis93} considers the complexity of finding $\epsilon$-stationary
points of functions with Lipschitz derivatives in a first-order (gradient
and function-value) oracle model. For such problems, he proves a lower bound
of $\epsilon^{-1/2}$ oracle queries that applies to any deterministic
algorithm operating on certain two-dimensional functions. This appears to be
the first algorithm-independent lower bound for approximating stationary
points of non-convex functions, but it is unclear if the bound is tight, even
for functions on $\R^2$.

A related line of work considers algorithm-dependent lower bounds,
describing functions that are challenging for common classes of algorithms,
such as Newton's method and gradient descent. In this vein, \citet{Jarre11}
shows that the Chebyshev-Rosenbrock function is difficult to optimize, and
that any algorithm that employs line search to determine the step size will
require an exponential (in $\epsilon$) number of iterations to find an
$\epsilon$-suboptimal point, even though the Chebyshev-Rosenbrock function has only a
single stationary point. While this appears to contradict the polynomial
complexity guarantees mentioned above, \citet{CartisGoTo13} explain this by
showing that the difficult Chebyshev-Rosenbrock instances have
$\epsilon$-stationary point with function value that is
$\omega(\epsilon)$-suboptimal.  Cartis et al.\ also develop
algorithm-specific lower bounds on the iteration complexity of finding
approximate stationary points. Their works~\cite{CartisGoTo10,CartisGoTo12}
show that the performance guarantees for gradient descent and cubic
regularization of Newton's method are tight for two-dimensional functions
they construct, and they also extend these results to certain structured
classes of methods~\cite{CartisGoTo12b,CartisGoTo17}.

\subsection{The importance of high-dimensional 
constructions}\label{sec:intro-importance-of-high-d}

To tightly characterize the algorithm- and dimension-independent
complexity of finding $\epsilon$-stationary points, one \emph{must} 
construct hard instances whose domain has dimension that grows with 
$1/\epsilon$.  The reason for 
this is simple: there exist algorithms with 
complexity that trades dependence on dimension $d$ in favor of better 
$1/\epsilon$ dependence. Indeed, \citet{Vavasis93} gives a grid-search
method that, for functions 
with Lipschitz gradient, finds an $\epsilon$-stationary point in 
$\max\{2^d,\epsilon^{-2d/(d+2)}\}$ gradient and function evaluations. 
Moreover, \citet{Hinder18} 
exhibits a cutting-plane method that, for functions with Lipschitz first  
and third derivatives, finds an  $\epsilon$-stationary point in $d \cdot  
\epsilon^{-4/3}\log \frac{1}{\epsilon}$ gradient and function evaluations. 

High-dimensional constructions are similarly unavoidable when developing 
lower bounds in convex optimization. There, the center-of-gravity
cutting plane 
method (cf.~\cite{Nesterov04}) finds an $\epsilon$-suboptimal point in 
$d\log\frac{1}{\epsilon}$  %
(sub)gradient evaluations, for any continuous 
convex function with bounded distance to optimality. Consequently, proofs of the dimension-free lower bound for 
convex optimization (as we cite in the previous section) all rely on 
constructions   
whose dimensionality grows polynomially in $1/\epsilon$. 

Our paper continues this well-established practice, and our lower bounds 
apply in the following order of quantifiers:
for all $\epsilon > 0$, there exists a dimension
$d \in \N$ such that for any $d' \ge d$ and algorithm
$\alg$, there is some
$f:\R^{d'} \to \R$ 
such that $\alg$ requires at least $T(\epsilon)$ oracle queries to find an 
$\epsilon$-stationary point of $f$. Our bounds on deterministic 
algorithms require dimension $d=1+2T(\epsilon)$, while our bounds on all 
randomized algorithms require $d = c \cdot T(\epsilon)^2 \log 
T(\epsilon)$ for a numerical constant $c<\infty$. In contrast, the results 
of~\citet{Vavasis93} 
and~\citet{CartisGoTo10,CartisGoTo12,CartisGoTo12b,CartisGoTo17} hold 
with $d=2$ independent of $\epsilon$. Inevitably, they do so at 
a 
cost; the lower bound~\cite{Vavasis93} is loose, while the lower 
bounds~\cite{CartisGoTo10,CartisGoTo12,CartisGoTo12b,CartisGoTo17} 
apply to only 
restricted algorithm classes that exclude the aforementioned 
grid-search and cutting-plane algorithms.

\subsection{Our contributions}\label{sec:our-contributions}

In this paper, we consider the class of all randomized algorithms that
access the function $f$ through an \emph{information oracle} that returns
the function value, gradient, Hessian and all higher-order derivatives of
$f$ at a queried point $x$.
Our main result (Theorem~\ref{thm:fullder-final} in
Section~\ref{sec:fullder-random}) is as follows. Let $p\in\N$ and $\DeltaF,
\Smp$, and $\epsilon>0$. Then, for any randomized algorithm $\alg$ based on
the oracle described above, there exists a function $f$ that has
$\Smp$-Lipschitz $p$th derivative, satisfies $f(x\ind{0})-f(x\opt)\le
\DeltaF$, and is such that, with high probability, $\alg$ requires at least
\begin{equation*}
  c_p \cdot \DeltaF \Sm{p}^{1/p} 
  \epsilon^{-(p+1)/p}
\end{equation*}
oracle queries to find an $\epsilon$-stationary point of $f$, where $c_p >
0$ is a constant decreasing at most polynomially in $p$. As explained in the
previous section, the domain of the constructed function $f$ has dimension
polynomial in $1/\epsilon$.

For every $p$, our lower bound matches (up to a constant) known upper
bounds, thereby characterizing the optimal complexity of finding
stationary points. For $p=1$, our results imply that gradient
descent~\cite{Nesterov04,Nesterov12b} is optimal among all methods (even
randomized, high-order methods) operating on functions with Lipschitz
continuous gradient and bounded initial sub-optimality. Therefore, to
strengthen the guarantees of gradient descent one \emph{must} introduce
additional assumptions, such as convexity of $f$ or Lipschitz continuity of
$\deriv{2}f$. Similarly, in the case $p=2$ we establish that cubic
regularization of Newton's method~\cite{NesterovPo06,CartisGoTo10} achieves
the optimal rate $\epsilon^{-3/2}$, and for general $p$ we show that $p$th
order Taylor-approximation methods~\cite{BirginGaMaSaTo17} are optimal.

These results say little about the potential of first-order methods on
functions with higher-order Lipschitz derivatives, where first-order methods
attain rates better than $\epsilon^{-2}$~\cite{CarmonDuHiSi17}.
In Part~II of this series~\cite{NclbPartII}, we address this issue and show
lower bounds for deterministic algorithms using only first-order
information. The lower bounds exhibit a fundamental gap between first- and
second-order methods, and nearly match the known upper
bounds~\cite{CarmonDuHiSi17}.

\subsection{Our approach and paper organization}

In Section~\ref{sec:prelims} we introduce the classes of functions and
algorithms we consider as well as our notion of complexity. Then, in
Section~\ref{sec:anatomy}, we present the generic technique we use to prove
lower bound for deterministic algorithms in both this paper and Part
II~\cite{NclbPartII}. While essentially present in previous work, our technique 
abstracts away and generalizes the central arguments in many lower
bounds~\cite{NemirovskiYu83, Nemirovski94, WoodworthSr16,
  ArjevaniShSh17}. The technique applies to higher-order methods and
provides lower bounds for general optimization goals, including finding
stationary points (our main focus), approximate minimizers, and second-order
stationary points. It is also independent of whether the functions under
consideration are convex, applying to any function class with appropriate
rotational invariance~\cite{NemirovskiYu83}. The key building blocks of the
technique are Nesterov's notion of a ``chain-like''
function~\cite{Nesterov04}, which is difficult for a certain subclass of
algorithms, and a ``resisting oracle''~\cite{NemirovskiYu83,Nesterov04}
reduction that turns a lower bound for this subclass into a lower bound for
all deterministic algorithms.

In Section~\ref{sec:fullder-deterministic} we apply this generic method to
produce lower bounds for \emph{deterministic} methods
(Theorem~\ref{thm:fullder-simple}). The deterministic results underpin our
analysis for randomized algorithms, which culminates in
Theorem~\ref{thm:fullder-final} in Section~\ref{sec:fullder-random}.
Following~\citet{WoodworthSr16}, we consider random rotations of our
deterministic construction, and show that for any algorithm such a randomly
rotated function is, with high probability, difficult. For completeness, in
Section~\ref{sec:fullder-distance} we provide lower bounds on finding
stationary points of functions where $\norms{x\ind{0} - x\opt}$ is bounded,
rather than the function value gap $f(x\ind{0})-f(x\opt)$; these bounds have
the same $\epsilon$ dependence as their bounded function value counterparts.

\paragraph{Notation}
Before continuing, we provide the conventions we adopt
throughout the paper. For a sequence of vectors, subscripts denote
coordinate index, while parenthesized superscripts denote element index, e.g. 
$x\ind{i}_j$ is the $j$th coordinate of the $i$th entry in the sequence
$x\ind{1}, x\ind{2}, \ldots$.  For any $p\ge1$ and $p$ times continuously
differentiable $f:\R^d \to \R$, we let $\deriv{p} f(x)$ denote the tensor of
$p$th order partial derivatives of $f$ at point $x$, so $\deriv{p} f(x)$ is
an order $p$ symmetric tensor with entries
\begin{equation*}
  \left[ \deriv{p} f(x) \right]_{i_1, \ldots, i_p}
  = \deriv{p}_{i_1, \ldots, i_p} f(x) 
  = \frac{\del^p f}{\del x_{i_1} \cdots \del x_{i_p}}(x)
  ~~ \mbox{for~} i_j \in \{1, \ldots, d\}.
\end{equation*} 
Equivalently, we
may write $\deriv{p} f(x)$ as a multilinear operator $\deriv{p} f(x):
(\R^d)^p \to \R$, 
\begin{equation*}
  \deriv{p} f(x)\left[v\ind{1}, \ldots, v\ind{p}\right]
  = \sum_{i_1 =1}^d \cdots  \sum_{i_p =1}^d 	
  v_{i_1}\ind{1} \cdots v_{i_p}\ind{p} \frac{\del^p f}{\del x_{i_1} \cdots \del x_{i_p}}(x)
  = \innerbig{\deriv{p} f(x)}{v\ind{1} \otimes \cdots \otimes v\ind{p}},
\end{equation*} 
where $\inner{\cdot}{\cdot}$ is the Euclidean inner product on tensors,
defined for order $k$ tensors $T$ and $M$ by $\inner{T}{M} = \sum_{i_1,
  \ldots, i_k} T_{i_1, \ldots, i_k}M_{i_1, \ldots, i_k}$, and $\otimes$
denotes the Kronecker product.
We let $\tensordim{k}{d}$ denote $d \times \cdots \times d$, $k$ times,
so that $T \in \R^{\tensordim{k}{d}}$ denotes an order $k$ tensor.

For a vector $v\in\R^d$ we let $\norm{v} \defeq \sqrt{\inner{v}{v}}$ denote 
the Euclidean norm of $v$.  
For a tensor $T \in
\R^{\tensordim{k}{d}}$, the Euclidean operator norm of $T$ is 
\begin{equation*}
  \opnorm{T} \defeq 
  \sup_{v\ind{1}, \ldots, v\ind{k}}
  \Big\{\inner{T}{v\ind{1} \otimes \cdots \otimes v\ind{k}}
  = \sum_{i_1, \ldots, i_k} T_{i_1,\ldots, i_k} v_{i_1}\ind{1}
  \cdots v_{i_k}\ind{k}
  \mid \norms{v\ind{i}} \le 1, i=1,\ldots,k \Big\}.
\end{equation*}
If $T$ is a symmetric order $k$ tensor, meaning that $T_{i_1, \ldots, i_k}$
is invariant to permutations of the indices (for example, $\deriv{k} f(x)$
is always symmetric), then \citet[Thm.~2.1]{ZhangLiQi12} show that
\begin{equation}
  \label{eq:prelims-eckhart-young-tensors}
  \opnorm{T} = \sup_{\norm{v} = 1} \big|\inner{T}{v^{\otimes k}}\big|,
  ~~~
  \mbox{where} ~~~
  v^{\otimes k} = \underbrace{v \otimes v \otimes \cdots \otimes v}_{
    k~{\rm times}}.
\end{equation}
For vectors, the
Euclidean and operator norms are identical. 

For any $n\in\N$, we let $[n] \defeq \{1, \ldots, n\}$ denote the set of
positive integers less than or equal to $n$. We let $\mc{C}^\infty$ denote
the set of infinitely differentiable functions. We denote the $i$th standard
basis vector by $e\ind{i}$, and let $I_d \in \R^{d\times d}$ denote the $d
\times d$ identity matrix; we drop the subscript $d$ when it is clear from
context.  For any set $\mc{S}$ and functions $g,h : \mc{S} \to
\openright{0}{\infty}$ we write $g \lesssim h$ or $g = O(h)$ if there exists
$c>0$ such that $g(s) \le c\cdot h(s)$ for every $s\in\mc{S}$. We
write $g = \Otil{h}$ if $g \lesssim h \log(h + 2)$.
\section{Preliminaries}
\label{sec:prelims}

We begin our development with definitions of the
classes of functions (\S~\ref{sec:prelims-funcs}), classes of algorithms (\S~\ref{sec:prelims-algs}), and notions of complexity  (\S~\ref{sec:prelims-complexity}) that
we study. 

\subsection{Function classes}\label{sec:prelims-funcs}

Measures of function regularity are crucial for the design and analysis of
optimization algorithms~\cite{Nesterov04,
	BoydVa04, NemirovskiYu83}.
We focus on two types of
regularity conditions: Lipschitzian properties of derivatives and bounds on
function value.

We first list a few equivalent definitions of Lipschitz
continuity.  A function $f:\R^d \to \R$ has $\Smp$-Lipschitz $p$th order
derivatives if it is $p$ times continuously differentiable, and for every $x
\in \R^d$ and direction $v \in \R^d, \norm{v} \le 1$, the directional
projection $f_{x,v}(t) \defeq f(x + t \cdot v)$ of $f$, defined for
$t \in \R$, satisfies
\begin{equation*}
  \left| f_{x,v}^{(p)}(t) - f_{x,v}^{(p)}(t') \right|
  \le \Smp \left|t - t'\right|
\end{equation*}
for all $t, t' \in \R$, where $f_{x,v}^{(p)}(\cdot)$ is the $p$th
derivative of $t \mapsto f_{x,v}(t)$. If $f$ is $p+1$ times
continuously differentiable, this is equivalent to requiring 
\begin{equation*}
  \left|f_{x,v}^{(p+1)}(0)\right| \le \Smp
  ~~~ \mbox{or} ~~~
  \opnorm{\deriv{p+1} f(x)} \le \Smp
\end{equation*}
for all $x, v \in \R^d$, $\norm{v}\le 1$. We occasionally refer to a
function with Lipschitz $p$th order derivatives as $p$th order smooth.

Complexity guarantees for finding stationary points of non-convex functions
$f$ typically depend on the function value bound $f(x\ind{0}) - \inf_x
f(x)$, where $x\ind{0}$ is a pre-specified point. Without loss of
generality, we take the pre-specified point to be $0$ for the remainder
of the paper. With that in mind, we define the following classes of
functions.

\begin{definition}
  \label{def:f-class}
  Let $p \ge 1$, $\DeltaF > 0$ and $\Sm{p} > 0$. Then the set
  \begin{equation*}
    \Fclass{p}
  \end{equation*}
  denotes the union, over $d \in \N$, of the collection of 
  $\mc{C}^\infty$ functions $f : \R^d \to \R$ with $\Smp$-Lipschitz $p$th
  derivative and $f(0)-\inf_x f(x) \le \DeltaF$. 
\end{definition}
\noindent
The function classes $\Fclass{p}$ include functions on $\R^d$ for all $d \in
\N$, following the established study of
 ``dimension free'' convergence 
 guarantees~\cite{NemirovskiYu83,Nesterov04}. As explained in 
 Section~\ref{sec:intro-importance-of-high-d}, 
 we construct explicit functions $f : \R^d \to \R$ that are
difficult to optimize, where the dimension $d$ is finite, but
our choice of $d$ grows inversely in the desired accuracy of the solution.

For our results, we also require the following important invariance notion,
proposed (in the context of optimization) by
\citet[Ch.~7.2]{NemirovskiYu83}.
\begin{definition}[Orthogonal invariance]
  A class of functions $\mc{F}$ is \emph{orthogonally invariant} if for
  every $f \in \mc{F}$, $f : \R^d \to \R$, and every matrix $U \in \R^{d'
    \times d}$ such that $U^{\top} U = I_d$, the function $f_U: \R^{d'} \to
  \R$ defined by $f_U(x) = f(U^{\top} x)$ belongs to $\mathcal{F}$.
\end{definition}
\noindent
Every function class we consider is orthogonally invariant, as $f(0)-\inf_x
f(x) = f_U(0) - \inf_x f_U(x)$ and $f_U$ has the same Lipschitz constants to
all orders as $f$, as their collections of associated directional
projections are identical.

\subsection{Algorithm classes}\label{sec:prelims-algs}

We also require careful definition of the classes of optimization
algorithms we consider.  For any dimension $d\in \N$, an \emph{algorithm}  
$\alg$ (also referred to as \emph{method}) maps
functions $f:\R^d\to\R$ to a sequence of \emph{iterates} in $\R^d$; that is,
$\alg$ is defined separately for every finite $d$.
We let
\begin{equation*}
  \alg[f] = \{x\ind{t}\}_{t=1}^\infty
\end{equation*}
denote the sequence $x\ind{t} \in \R^d$ of iterates that $\alg$ generates
when operating on $f$. 

To model the computational cost of an algorithm, we adopt the \emph{information-based complexity} framework,
which \citet{NemirovskiYu83} develop (see
also~\cite{TraubWaWa88,AgarwalBaRaWa12,BraunGuPo17}), and view every every iterate $x\ind{t}$ as a query to an \emph{information oracle}. Typically, one places restrictions on the information the oracle returns (\eg only the function value and gradient at the query point) and makes certain  assumptions on how the algorithm uses this information (\eg deterministically). Our approach is syntactically different but semantically identical: we build the oracle restriction, along with any other assumption, directly into the structure of the algorithm. To formalize this, we define
\begin{equation*}
  \deriv{(0,\ldots,p)} f(x) \defeq \{ f(x), \grad f(x), \hess f(x), \ldots,
  \deriv{p}f(x) \}
\end{equation*}
as shorthand for 
the response of a $p$th order oracle to a query at point $x$. When $p=\infty$ 
this corresponds to an oracle that reveals all derivatives at $x$.
Our algorithm classes follow.

\paragraph{Deterministic algorithms}
For any $p\ge 0$, a \emph{$p$th-order deterministic algorithm} $\alg$
operating on $f:\R^d\to\R$ is one producing iterates of the form
\begin{equation*}
  x\ind{i} = \alg\ind{i}
  \left(\deriv{(0,\ldots,p)}f(x\ind{1}), \ldots, \deriv{(0,\ldots,p)}f(x\ind{i-1})\right)
  ~~\text{for }i\in\N,
\end{equation*}
where $\alg\ind{i}$ is a measurable mapping to $\R^d$ (the dependence on
dimension $d$ is implicit). We denote the class
of $p$th-order deterministic algorithms by $\AlgDet\ind{p}$ and let
$\AlgDet \defeq \AlgDet\ind{\infty}$ 
denote the class of all deterministic algorithms based on derivative
information.

\newcommand{\PREG}[1]{{\mathsf{REG}_{#1}}}

As a concrete example, for any $p\ge1$ and $L>0$ consider the algorithm
$\PREG{p,L}\in\AlgDet\ind{p}$ that produces iterates by minimizing the sum
of a $p$th order Taylor expansion and an order $p+1$ proximal term:
\begin{equation}\label{eq:preg-def}
  x\ind{k + 1}
  \defeq \argmin_x
  \bigg\{f(x\ind{k}) + \sum_{q = 1}^p
  \<\deriv{q} f(x\ind{k}), x^{\otimes q}\>
  + \frac{L}{(p + 1)!} \norms{x - x\ind{k}}^{p+1} \bigg\}.
\end{equation}
For $p=1$, $\PREG{p,L}$ is gradient descent with step-size $1/L$, for $p=2$ 
it is cubic-regularized Newton's method~\cite{NesterovPo06}, and for 
general $p$ it is a simplified form of the scheme 
that~\citet{BirginGaMaSaTo17} propose. 

\paragraph{Randomized algorithms (and function-informed processes)}
A \emph{$p$th-order randomized algorithm} $\alg$ is a distribution on
$p$th-order deterministic algorithms. We can write any such algorithm as a
deterministic algorithm given access to a random uniform variable on $[0,
  1]$ (\ie infinitely many random bits).  Thus the algorithm operates on $f$
by drawing $\xi \sim \uniform[0, 1]$ (independently of $f$), then producing
iterates of the form
\begin{equation}
  \label{eq:prelims-randomized-alg}
  x\ind{i} = \alg\ind{i}\left(\xi, \deriv{(0,\ldots,p)} f(x\ind{1}), \ldots,
  \deriv{(0,\ldots,p)}f(x\ind{i-1})\right)
  ~~\text{for }i\in\N,
\end{equation}
where $\alg\ind{i}$ are measurable mappings into $\R^d$.  In this case,
$\alg[f]$ is a random sequence,  
and
we call a random process $\{x\ind{t}\}_{t\in\N}$ 
\emph{informed by $f$} if it has the same law as $\alg[f]$
for some randomized algorithm $\alg$.
We let $\AlgRand\ind{p}$ denote the class of $p$th-order
randomized algorithms and 
$\AlgRand \defeq \AlgRand\ind{\infty}$ 
denote the class of randomized algorithms that use derivative-based
information.

\paragraph{Zero-respecting sequences and algorithms}
While deterministic and randomized algorithms are the natural collections for
which we prove lower bounds, it is useful to define an additional
structurally restricted class. This class forms the backbone of our lower
bound strategy (Sec.~\ref{sec:anatomy}), as it is both `small' enough to 
uniformly underperform on a single function, and `large' enough to imply 
lower bounds on the natural algorithm classes. %

For $v \in\R^d$ we let $\support{v} \defeq \{ i \in [d] \mid v_i \neq 0 \}$
denote the support (non-zero indices) of $v$. We extend this to tensors as
follows. Let $T \in \R^{\tensordim{k}{d}}$ be an order $k$ tensor, and for
$i \in \{1,\ldots,d\}$ let $T_i \in \R^{\tensordim{k-1}{d}}$ be the
order $(k-1)$ tensor defined by $[T_i]_{j_1, \ldots, j_{k-1}} = T_{i, j_1,
  \ldots, j_{k-1}}$. With this notation, we define
\begin{equation*}
  \support{T} \defeq \{ i \in \{1 , \dots, d \} \mid T_i \neq 0 \}.
\end{equation*}
Then for $p \in \N$ and any $f:\R^d \to \R$, we say that the sequence
$x\ind{1}, x\ind{2}, \ldots$ is \emph{$p$th order zero-respecting
  with respect to $f$} if
\begin{equation}
  \label{eq:prelim-zr-def}
  \support{x\ind{t}} \subseteq \bigcup_{q \in [p]} \bigcup_{s < t}
  \support{\deriv{q}f(x\ind{s})}~~\mbox{for each } t \in \N.
\end{equation}
The definition~\eqref{eq:prelim-zr-def} says that $x\ind{t}_i = 0$ if all
partial derivatives involving the $i$th coordinate
of $f$ (up to the $p$th order) are zero.  For $p=1$, this
definition is equivalent to the requirement that for every $t$ and
$j\in[d]$, if $\grad_j f(x\ind{s}) = 0$ for $s < t$, then $x\ind{t}_j =
0$. The requirement~\eqref{eq:prelim-zr-def} implies that
$x\ind{1}=0$.

An algorithm $\alg \in \AlgRand$ is \emph{$p$th order zero-respecting} if
for any $f : \R^d\to \R$, the (potentially random) iterate sequence
$\alg[f]$ is $p$th order zero respecting w.r.t.\ $f$.  Informally, an
algorithm is zero-respecting if it never explores coordinates which appear
not to affect the function. When initialized at the origin, most common
first- and second-order optimization methods are zero-respecting, including
gradient descent (with and without Nesterov acceleration), conjugate
gradient~\cite{HagerZh06}, BFGS and
L-BFGS~\cite{LiuNo89,NocedalWr06},\footnote{if the initial Hessian
  approximation is a diagonal matrix, as is typical.} Newton's method (with
and without cubic regularization~\cite{NesterovPo06}) and trust-region
methods~\cite{ConnGoTo00}.  We denote the class of $p$th order
zero-respecting algorithms by $\AlgZR\ind{p}$, and let
$\AlgZR\defeq \AlgZR\ind{\infty}$.

In the literature on lower bounds for first-order convex optimization, it is
common to assume that methods only query points in the span of the gradients
they observe~\cite{Nesterov04,ArjevaniShSh16}.  Our notion of
zero-respecting algorithms generalizes this assumption to higher-order
methods, but even first-order zero-respecting algorithms are slightly more general. For example, coordinate
descent methods~\cite{Nesterov12} are zero-respecting, but they generally do
not remain in the span of the gradients.

\subsection{Complexity measures}\label{sec:prelims-complexity}

\newcommand{\law}{\mc{L}_x}
\newcommand{\PR}{\mathbf{P}}

With the definitions of function and algorithm class in hand, we turn to
formalizing our notion of complexity: what is the best performance an
algorithm in class $\mathcal{A}$ can achieve \emph{for all} functions in
class $\mc{F}$?  As we consider finding stationary points of $f$, the
natural performance measure is the number of iterations (oracle queries) 
required to find
a point $x$ such that $\norm{\nabla f(x)} \le \epsilon$. Thus for a
deterministic sequence  $\{x\ind{t}\}_{t\in\N}$ we define
\begin{equation*}
  \TimeEps{\{x\ind{t}\}_{t\in\N}}{f} \defeq
  \inf\left\{t\in\N \mid \normbig{\grad
    f(x\ind{t})} \le \epsilon \right\},
\end{equation*}
and refer to it as the \emph{complexity of $\{x\ind{t}\}_{t\in\N}$ on $f$}.
As we consider randomized algorithms as well, for a random process
$\{x\ind{t}\}_{t\in\N}$ with probability distribution $P$, meaning
for a set $A \subset (\R^d)^\N$ the probability that
$\{x\ind{t}\}_{t \in \N} \in A$ is $P(A)$, we define
\begin{equation}
  \label{eq:prelims-time-eps}
  \TimeEps{P}{f} \defeq
    \inf\left\{t\in\N \mid P\left(
    \normbig{\grad f(x\ind{s})} >  \epsilon
    ~ \mbox{for~all~} s \le t
    \right) \le \frac{1}{2}\right\}.
\end{equation}
The complexity $\TimeEps{P}{f}$ is also the median of the random variable  
$\TimeEps{\{x\ind{t}\}_{t\in\N}}{f}$ for $\{x\ind{t}\}_{t \in \N} \sim P$.
By Markov's inequality, definition~\eqref{eq:prelims-time-eps} provides a lower bound on
expectation-based alternatives, as
\begin{equation*}
  \inf\left\{t\in\N \mid \E_{P}\left[ \norms{\grad
      f(x\ind{t})}\right] \le \epsilon \right\} \ge \TimeEps[2\epsilon]{P}{f}
  ~~\mbox{and}~~
  \E_{P}\left[ \TimeEps{\{x\ind{t}\}_{t\in\N}}{f} \right]
  \ge \half \TimeEps{P}{f}.
\end{equation*}
(Here $\E_P$ denotes expectation taken according to the distribution $P$.)

To measure the performance
of algorithm $\alg$ on function $f$, we evaluate the iterates it produces
from $f$, and with mild abuse of notation, we define
\begin{equation*}
  \TimeEps{\alg}{f} \defeq
  \TimeEps{\alg[f]}{f}
\end{equation*}
as the complexity of $\alg$ on $f$.
With this setup, we define the \emph{complexity of algorithm class $\mathcal{A}$ on function class $\mathcal{F}$} as
\begin{equation}
  \label{eq:complexity-def}
  \CompEps{\mathcal{A}}{\mathcal{F}} \defeq 
  \inf_{\alg\in\mc{A}}\sup_{f\in\FclassBlank} \TimeEps{\alg}{f}.
\end{equation}

Many algorithms guarantee ``dimension independent''
convergence~\cite{Nesterov04} and thus provide upper bounds for the
quantity~\eqref{eq:complexity-def}. A careful tracing of constants in the
analysis of \citet{BirginGaMaSaTo17} implies that the generalized
regularization scheme $\PREG{p,L}$ defined by the
recursion~\eqref{eq:preg-def} guarantees
\begin{equation}\label{eq:pth-order-reg-ub}
  \CompEps{\AlgDet\ind{p}\cap\AlgZR\ind{p}}{\Fclass{p}}
  \le \sup_{f\in\Fclass{p}}\TimeEps{\PREG{p,\Smp}}{f}
   \lesssim \DeltaF
  \Sm{p}^{1/p} \epsilon^{-(1+p)/p}
\end{equation}
for all $p \in \N$.  In this paper we prove these rates are sharp to within
($p$-dependent) constant factors. 

While definition~\eqref{eq:complexity-def} is our primary notion of
complexity, our proofs provide bounds on smaller quantities
than~\eqref{eq:complexity-def} that also carry meaning. For zero-respecting
algorithms, we exhibit a single function $f$ and bound
$\inf_{\alg\in\AlgZR}\TimeEps{\alg}{f}$ from below, in effect interchanging
the $\inf$ and $\sup$ in~\eqref{eq:complexity-def}. This implies that all
zero-respecting algorithms share a common vulnerability. For randomized
algorithms, we exhibit a distribution $P$ supported on functions of
a fixed dimension $d$, and we lower bound the average $\inf_{\alg\in\AlgRand} \int
\TimeEps{\alg}{f} dP(f)$, bounding the
\emph{distributional complexity}~\cite{NemirovskiYu83,
  BraunGuPo17},
which is never greater than worst-case complexity (and is equal for
randomized and deterministic algorithms).
Even randomized algorithms
share a common vulnerability: functions drawn from $P$.

\section{Anatomy of a lower bound}
\label{sec:anatomy}

In this section we present a generic approach to proving lower bounds for 
optimization algorithms. The basic techniques we use are well-known and 
applied extensively in the literature on lower bounds for convex 
optimization~\cite{NemirovskiYu83,Nesterov04,WoodworthSr16,ArjevaniShSh17}.
 However, here we generalize and abstract away these techniques, showing 
how they apply to high-order methods, non-convex functions, and various 
optimization goals (\eg $\epsilon$-stationarity, $\epsilon$-optimality).

\subsection{Zero-chains}
\label{sec:anatomy-zero-chains}

Nesterov~\cite[Chapter 2.1.2]{Nesterov04} proves lower bounds for smooth
convex optimization problems using the ``chain-like'' quadratic function
\begin{equation}\label{eq:prelims-nesterov-chain}
  f(x) \defeq
  \half (x_1 - 1)^2
  + \half \sum_{i=1}^{d-1} (x_i - x_{i+1})^2,
\end{equation}
which he calls the ``worst function in the world.'' The important property
of $f$ is that for every $i\in[d]$, $\grad_i f(x)=0$ whenever
$x_{i-1}=x_{i}=x_{i+1}=0$ (with $x_0 \defeq 1$ and $x_{d+1} \defeq
0$). Thus, if we ``know'' only the first $t-1$ coordinates of $f$, \ie
are able to query only vectors $x$ such $x_{t}=x_{t+1}=\cdots=x_{d}=0$, then
any $x$ we query satisfies $\grad_s f(x) =0$ for $s>t$; we only ``discover''
a single new coordinate $t$. 
We generalize this chain
structure to higher-order derivatives as follows.

\begin{definition}\label{def:zero-chain}
  For $p\in\N$, a function $f : \R^d \rightarrow \R$ is a \emph{$p$th-order
    zero-chain} if for every $x \in \R^d$,
  \begin{equation*}
    \support{x} \subseteq 
    \{ 1, \dots, i - 1 \}
    ~~ \mbox{implies} ~~
    \bigcup_{q \in [p]}{\support{\deriv{q}f(x)}}  \subseteq  \{ 1, \dots, i \}.
  \end{equation*}
  We say $f$ is a \emph{zero-chain} if it is a $p$th-order zero-chain
  for every $p\in\N$.
\end{definition}
\noindent
In our terminology, Nesterov's function~\eqref{eq:prelims-nesterov-chain} is
a first-order zero-chain but not a second-order zero-chain, as
$\support{\deriv{2}f(0)}=[d]$. Informally, at a point for which
$x_{i-1}=x_{i}=\cdots=x_d=0$, a zero-chain appears constant in
$x_i, x_{i + 1}, \ldots, x_d$. Zero-chains structurally limit the rate
with which zero-respecting algorithms acquire information from
derivatives. We formalize this in the following observation,
whose proof is a straightforward induction; see 
Table~\ref{tab:zero-chain-illue} for an illustration.

\begin{observation}
  \label{obs:zero-chain}
  Let $f : \R^d \rightarrow \R$ be a $p$th order zero-chain and let
  $x\ind{1} = 0, x\ind{2}, \ldots$ be a pth order zero-respecting sequence with
  respect to $f$. Then $x\ind{t}_j=0$ for $j \ge t$ and all $t\le d$.
\end{observation}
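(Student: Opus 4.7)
The plan is to prove the statement by induction on $t$, showing the stronger claim that $\support{x\ind{t}} \subseteq \{1, \ldots, t-1\}$ for every $t \in \N$ (which trivially implies $x\ind{t}_j = 0$ for all $j \ge t$, and this is meaningful as long as $t \le d$).

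For the base case $t=1$, we have $x\ind{1} = 0$ by hypothesis, so $\support{x\ind{1}} = \emptyset \subseteq \{1, \ldots, 0\}$. For the inductive step, suppose $\support{x\ind{s}} \subseteq \{1, \ldots, s-1\}$ for every $s \le t$. By the zero-respecting property~\eqref{eq:prelim-zr-def} applied to the sequence $x\ind{1}, x\ind{2}, \ldots$,
\[
\support{x\ind{t+1}} \subseteq \bigcup_{q \in [p]} \bigcup_{s \le t} \support{\deriv{q} f(x\ind{s})}.
\]
For each $s \le t$, the inductive hypothesis gives $\support{x\ind{s}} \subseteq \{1, \ldots, s-1\}$, so the zero-chain property of $f$ (Definition~\ref{def:zero-chain}, with $i = s$) yields $\bigcup_{q \in [p]} \support{\deriv{q} f(x\ind{s})} \subseteq \{1, \ldots, s\} \subseteq \{1, \ldots, t\}$. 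Taking the union over $s \le t$ preserves this containment, so $\support{x\ind{t+1}} \subseteq \{1, \ldots, t\}$, completing the induction.

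There is no real obstacle here: the proof is a direct chaining of the two definitions, with the zero-chain property supplying the ``at most one new coordinate per step'' guarantee and the zero-respecting property ensuring the algorithm cannot jump to coordinates that have not yet been revealed by any derivative. The only thing to be slightly careful about is the off-by-one indexing (using $i = s$ rather than $i = s-1$ when invoking Definition~\ref{def:zero-chain}), which is what allows the support to grow by exactly one coordinate per iterate.
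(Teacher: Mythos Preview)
Your proof is correct and essentially identical to the paper's: both prove by induction the stronger claim $\support{x\ind{t}} \subseteq [t-1]$, invoking the zero-chain definition to bound the derivative supports and then the zero-respecting property to bound the next iterate's support. The paper phrases the induction slightly differently (inducting on a bound $k$ for which the claim holds for all $t \le k$) but the logical content is the same.
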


\begin{proof}
	We show by induction on $k$ that $\support{x\ind{t}}\subseteq [t-1]$ for every $t\le k$; the case $k=d$ is the required result. The case $k=1$ holds since $x\ind{1}=0$. If the 
hypothesis holds for some $k<d$ then by Definition~\ref{def:zero-chain} we have $\cup_{q \in [p]}{\support{\deriv{q}f(x^{(t)})}}  \subseteq  \{ 1, \dots, t \}$ for every $t\le k$. Therefore, by 
the zero-respecting property~\eqref{eq:prelim-zr-def}, we have $ \support{x\ind{k+1}}\subseteq\cup_{q \in [p]} \cup_{t < k+1}  \support{\deriv{q}f(x\ind{t})} \subseteq [k]$, completing the 
induction.
\end{proof}

\newcommand{\nz}{*}
\definecolor{grayish}{rgb}{0.4,0.4,0.4} %
\newcommand{\z}{{\color{grayish}0}}

\mathprog{\vspace{-20pt}}

\begin{table}[h]
  \begin{center}
    \begin{tabular}{%
	>{\centering\arraybackslash}m{\mathprog{1.2cm}\arxiv{1.7cm}} %
	>{\centering\arraybackslash}m{\mathprog{1.5cm}\arxiv{2.0cm}} %
	>{\centering\arraybackslash}m{1.2cm} %
	>{\centering\arraybackslash}m{1cm} %
	>{\centering\arraybackslash}m{1cm} %
	>{\centering\arraybackslash}m{1cm} %
	>{\centering\arraybackslash}m{1cm} %
	>{\centering\arraybackslash}m{1cm} %
	>{\centering\arraybackslash}m{1cm} %
      }
      &&coordinate \\
      iteration & information & $j=1$ & $2$ & $3$ & $4$ & $\cdots$
      & $d-1$ & $d$ \\[3pt]
      \hline \vspace{3pt}
      \multirow{2}{*}{$t=0$} & $x\ind{0}$ & \z & \z & \z & \z & $\cdots$ &\z 
      &\z \\
      & $\grad f(x\ind{0})$  & $\nz$ & \z & \z & \z & $\cdots$ 
      &\z  
      &\z 
      \\[3pt]
      \hline \vspace{3pt}
      \multirow{2}{*}{$t=1$} & $x\ind{1}$ & $\nz$ & \z & \z & \z & $\cdots$ 
      &\z 
      &\z  \\
      & $\grad f(x\ind{1})$ & $\nz$ & $\nz$ & \z & \z & $\cdots$ 
      &\z  
      &\z 
      \\[3pt] 
      \hline \vspace{3pt}
      \multirow{2}{*}{$t=2$} & $x\ind{2}$ & $\nz$ & $\nz$ & \z & \z & $\cdots$ 
      &\z 
      &\z  \\
      & $\grad f(x\ind{2})$ & $\nz$ & $\nz$ & $\nz$ & \z & $\cdots$ 
      &\z  
      &\z 
      \\ \hline
      $\vdots$ & $\vdots$ & $\vdots$ & $\vdots$ & $\vdots$ & $\vdots$ &
      $\ddots$ & $\vdots$ & $\vdots$
      \\[3pt] 
      \hline \vspace{3pt}
      \multirow{2}{*}{$t=d-1$}
      & $x\ind{d-1}$              & $\nz$ & $\nz$ & $\nz$ & 
      $\nz$ & $\cdots$ 
      &$\nz$ 
      &\z  \\
      & $\grad f(x\ind{d-1})$ & $\nz$ & $\nz$ & $\nz$ & $\nz$ & 
      $\cdots$ 
      &$\nz$  
      &$\nz$
    \end{tabular}
  \end{center}
	\vspace{-8pt}
  \caption{Illustration of Observation~\ref{obs:zero-chain}: 
    a zero-respecting algorithm operating on a zero-chain. We indicate 
    the nonzero entries of the iterates and the gradients by $\nz$.} 
  \label{tab:zero-chain-illue}
\end{table}

\mathprog{\vspace{-28pt}}

\newcommand{\fbad}{f}
\newcommand{\fsup}{\tilde{f}} 
\subsection{A lower bound strategy}\label{sec:anatomy-strategy}

The preceding discussion shows that zero-respecting algorithms take many
iterations to ``discover'' all the coordinates of a zero-chain. In the 
following observation, we formalize how finding a suitable zero-chain 
provides a lower bound on the performance of zero-respecting algorithms.

\begin{observation}
  \label{obs:lower-bound-strategy}
Consider $\epsilon > 0$, a function class $\FclassBlank$, and $p,\T\in\N$.
If $\fbad : \R^{\T} \rightarrow \R$ satisfies
  \begin{enumerate}[i.]
\item  \label{item:fbad-zero-chain} $\fbad$ is a $p$th-order zero-chain,
\item \label{item:fbad-function-class}
  $\fbad$ belongs to the function class, \ie $\fbad\in\FclassBlank$, and 
\item \label{item:large-grad-prop}
  $\norm{\grad \fbad(x)} >\epsilon$ for every $x$ such that $x_{\T}=0$;\footnote{ We can readily adapt this property for lower bounds on other termination criteria, e.g. require $f(x)-\inf_y 
f(y) > \epsilon$ for every $x$ such that $x_\T=0$.}
\end{enumerate}
then $\CompEps{\AlgZR\ind p}{\FclassBlank} \ge \CompEps{\AlgZR\ind{p}}{\{f\}} > \T$.
\end{observation}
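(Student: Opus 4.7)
The plan is to combine the three hypotheses with Observation~\ref{obs:zero-chain} in an essentially one-step reduction. The first inequality $\CompEps{\AlgZR\ind p}{\FclassBlank} \ge \CompEps{\AlgZR\ind{p}}{\{f\}}$ is immediate from hypothesis \ref{item:fbad-function-class} and the definition~\eqref{eq:complexity-def}, since taking a supremum over the singleton $\{f\}\subseteq\FclassBlank$ cannot exceed the supremum over $\FclassBlank$. So the real content is the bound $\CompEps{\AlgZR\ind{p}}{\{f\}} > \T$.

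To establish this, I would fix an arbitrary $\alg\in\AlgZR\ind p$ and examine the (possibly random) iterate sequence $\{x\ind{t}\}_{t\in\N}=\alg[\fbad]$. Because $\alg$ is $p$th order zero-respecting, this sequence is almost surely $p$th order zero-respecting with respect to $\fbad$ (in the deterministic case, surely so). Hypothesis~\ref{item:fbad-zero-chain} tells us $\fbad$ is a $p$th-order zero-chain on $\R^{\T}$, so Observation~\ref{obs:zero-chain} applies with $d=\T$ and yields
\begin{equation*}
  x\ind{t}_j = 0 \quad \text{for every } j\ge t \text{ and every } t\le \T.
\end{equation*}
In particular $x\ind{t}_{\T}=0$ for all $t\le \T$, and hypothesis~\ref{item:large-grad-prop} then forces $\norm{\grad\fbad(x\ind{t})}>\epsilon$ for every $t\le \T$.

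For a deterministic $\alg$ this immediately gives $\TimeEps{\alg}{\fbad}>\T$. For a randomized $\alg$, the implication above holds with probability one over the algorithm's internal randomness, so
\begin{equation*}
  \PR\bigl(\norms{\grad\fbad(x\ind{s})}>\epsilon \text{ for all } s\le \T\bigr) = 1 > \tfrac{1}{2},
\end{equation*}
and the definition~\eqref{eq:prelims-time-eps} of $\TimeEps{\alg[\fbad]}{\fbad}$ again gives $\TimeEps{\alg}{\fbad}>\T$. Since $\alg\in\AlgZR\ind p$ was arbitrary, taking the infimum yields $\CompEps{\AlgZR\ind{p}}{\{f\}}>\T$. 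The only mildly delicate step is the randomized case, and that is handled cleanly by noting the zero-respecting property is pathwise, so the high-probability event in~\eqref{eq:prelims-time-eps} has probability one. No genuine obstacles arise; this observation is really a packaging of Observation~\ref{obs:zero-chain} into a complexity lower bound, and its role is to reduce the construction of lower bounds to the design of a single function $\fbad$ satisfying \ref{item:fbad-zero-chain}--\ref{item:large-grad-prop}.
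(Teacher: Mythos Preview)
Your proposal is correct and follows essentially the same approach as the paper: apply Observation~\ref{obs:zero-chain} to conclude $x\ind{t}_\T=0$ for $t\le\T$, then invoke hypothesis~\ref{item:large-grad-prop} to force large gradients, and finally take the infimum over $\AlgZR\ind{p}$. Your treatment is slightly more detailed than the paper's in that you explicitly address the randomized case via the pathwise zero-respecting property and the probabilistic definition~\eqref{eq:prelims-time-eps}, whereas the paper leaves this implicit.
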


\begin{proof}
For $\alg\in\AlgZR\ind{p}$ and $\{x\ind{t}\}_{t\in\N}=\alg[\fbad]$ we have
by Observation~\ref{obs:zero-chain} that $x\ind{t}_T=0$ for all $t\le \T$ and
the large gradient property~\eqref{item:large-grad-prop} then implies $\norm{\grad
  \fbad(x\ind{t})} >\epsilon$ for all $t\le\T$. Therefore 
$\TimeEps{\alg}{\fbad} > \T$, and since this holds for any
$\alg\in\AlgZR\ind{p}$ we have
\begin{equation*}
\CompEps{\AlgZR\ind p}{\FclassBlank} = \inf_{\alg\in\AlgZR\ind{p}}\sup_{\fsup \in \FclassBlank}\TimeEps{\alg}{\fsup} \ge
\sup_{\fsup \in \FclassBlank} \inf_{\alg\in\AlgZR\ind{p}}\TimeEps{\alg}{\fsup}  \ge
 \inf_{\alg\in\AlgZR\ind{p}}\TimeEps{\alg}{\fbad} > T.
\end{equation*}
\end{proof}

If $f$ is a zero-chain, then so is the function $x \mapsto \mu f(x/\sigma)$
for any multiplier $\mu$ and scale parameter $\sigma$. This is useful for
our development, as we construct zero-chains $\{g_\T\}_{\T\in\N}$ such that $\norm{\grad g_T(x)}>c$ for every
$x$ with $x_\T=0$ and some constant $c > 0$. By setting
$\fbad(x) = \mu g_\T(x/\sigma)$, then choosing $\T$, $\mu$, and $\sigma$ to
satisfy conditions~\eqref{item:fbad-function-class}
and~\eqref{item:large-grad-prop}, we obtain a lower bound. As our choice of
$\T$ is also the final lower bound, it must grow to infinity as $\epsilon$
tends to zero. Thus, the hard functions we construct are fundamentally
high-dimensional, making this strategy suitable only for dimension-free
lower bounds.

\subsection{From deterministic to zero-respecting algorithms}
\label{sec:anatomy-reduction}

Zero-chains allow us to generate strong lower bounds for zero-respecting algorithms. The following reduction shows that these lower bounds are valid for deterministic algorithms as 
well.

\begin{restatable}{proposition}{propPrelimsDetZR}\label{prop:prelims-det-zr}
  Let $p\in \N\cup\{\infty\}$, $\mc{F}$ be an orthogonally invariant 
  function class and $\epsilon>0$. Then
  \begin{equation*}
    \CompEps{\AlgDet\ind{p}}{\mathcal{F}} 
    \ge
    \CompEps{\AlgZR\ind{p}}{\mathcal{F}}.
  \end{equation*}
\end{restatable}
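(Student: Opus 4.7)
My plan is to apply a ``resisting oracle'' reduction in the spirit of~\cite{NemirovskiYu83,Nesterov04}: from any $\alg \in \AlgDet\ind{p}$ I will construct a companion zero-respecting algorithm $\alg' \in \AlgZR\ind{p}$ such that, for every $g \in \mc{F}$ with $g : \R^d \to \R$, the iterates $\tilde{x}\ind{t}$ of $\alg'$ on $g$ satisfy $\tilde{x}\ind{t} = U^\top x\ind{t}$, where $\{x\ind{t}\}$ are $\alg$'s iterates on a rotated function $f \defeq g_U$ and $U \in \R^{d' \times d}$ with $U^\top U = I_d$ is chosen adaptively. By orthogonal invariance $f \in \mc{F}$, and $U^\top U = I_d$ gives $\|\nabla f(x\ind{t})\| = \|\nabla g(\tilde{x}\ind{t})\|$; hence $\TimeEps{\alg}{f} = \TimeEps{\alg'}{g}$. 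Taking the supremum over $g$ and the infimum over $\alg$ then yields
\begin{equation*}
  \CompEps{\AlgDet\ind{p}}{\mc{F}}
  = \inf_{\alg \in \AlgDet\ind{p}} \sup_{f \in \mc{F}} \TimeEps{\alg}{f}
  \ge \inf_{\alg \in \AlgDet\ind{p}} \sup_{g \in \mc{F}} \TimeEps{\alg'(\alg)}{g}
  \ge \CompEps{\AlgZR\ind{p}}{\mc{F}}.
\end{equation*}

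To realize $\alg'$ on input $g : \R^d \to \R$, I would simulate $\alg$ in an ambient dimension $d' > d$ while building $U$ column-by-column, maintaining a set $S \subseteq [d]$ of ``committed'' indices with associated orthonormal vectors $\{u\ind{j}\}_{j \in S} \subseteq \R^{d'}$ (initially empty). At step $t$ the procedure would: (i) retrieve $\alg$'s next query $x\ind{t}$, determined by the derivative tensors fed to $\alg$ in prior steps; (ii) set $\tilde{x}\ind{t}_j = \inner{u\ind{j}}{x\ind{t}}$ for $j \in S$ and $\tilde{x}\ind{t}_j = 0$ otherwise (this will agree with $U^\top x\ind{t}$ for the final $U$, provided every not-yet-committed column is chosen orthogonal to $x\ind{t}$); (iii) query $g$ at $\tilde{x}\ind{t}$; (iv) for each $j \in \bigcup_{q \in [p]}\support{\deriv{q} g(\tilde{x}\ind{t})} \setminus S$, pick a unit vector $u\ind{j}$ orthogonal to $\{u\ind{j'} : j' \in S\} \cup \{x\ind{1},\dots,x\ind{t}\}$ and insert $j$ into $S$; (v) assemble $\deriv{(0,\ldots,p)} g_U(x\ind{t})$ from $\{u\ind{j}\}_{j \in S}$ and $\deriv{(0,\ldots,p)} g(\tilde{x}\ind{t})$ and return it to $\alg$. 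After all queries of interest I would complete $U$ arbitrarily (subject to orthonormality).

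The zero-respecting property of $\{\tilde{x}\ind{t}\}$ would then be immediate: $\tilde{x}\ind{t}_j \ne 0$ forces $j \in S$ at the start of step $t$, and $j$ can enter $S$ only at some step $s \le t-1$ when $j \in \bigcup_q \support{\deriv{q} g(\tilde{x}\ind{s})}$, so $\support{\tilde{x}\ind{t}} \subseteq \bigcup_{s<t}\bigcup_{q \in [p]} \support{\deriv{q} g(\tilde{x}\ind{s})}$, matching~\eqref{eq:prelim-zr-def}. Every column $u\ind{j}$ that contributes to $\deriv{q} g_U(x\ind{t})$ is committed by step (v), so the response synthesized there coincides with that of the final, fully-completed $U$.

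The only substantive thing to check is feasibility of step (iv): each new column must satisfy at most $|S| + t \le d + t$ linear constraints in $\R^{d'}$, which admits a unit-norm solution as soon as $d' > d + t$. Since $\mc{F}$ contains functions of every finite dimension, I can take $d'$ as large as desired---in particular exceeding any query budget of interest---making the construction feasible, and matching the paper's comment that deterministic lower bounds need only $d = 1 + 2T(\epsilon)$-dimensional hard instances. Everything else is bookkeeping that translates oracle responses for $g$ into oracle responses for $g_U$.
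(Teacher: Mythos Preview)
Your construction is essentially the paper's: both build the orthogonal matrix $U$ column-by-column as a resisting oracle, committing a new column $u\ind{j}$ only when coordinate $j$ first appears in the support of some $\deriv{q} g(\tilde{x}\ind{s})$, and choosing it orthogonal to all previous iterates and columns so that $\tilde{x}\ind{t} = U^\top x\ind{t}$ is maintained. The zero-respecting verification and the derivative-reconstruction via the chain rule match the paper's Lemma in Appendix~\ref{sec:prelims-proofs} almost line for line.

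The one place where you are looser than the paper is the sentence ``take $d'$ as large as desired---in particular exceeding any query budget of interest.'' As written, your claimed equality $\TimeEps{\alg}{f} = \TimeEps{\alg'}{g}$ holds only for as many steps as the simulation can run, namely while $d' > d + t$; beyond that, step~(iv) may be infeasible and $\alg'$ is undefined. Since $d'$ must be fixed as part of the definition of $\alg'$ (per input dimension $d$), you cannot let it depend on $\TimeEps{\alg'}{g}$. The paper resolves this by first assuming $\CompEps{\AlgDet\ind{p}}{\mc{F}} < T_0 < \infty$ (else the statement is trivial), taking $d' = d + T_0$, and proving only $\TimeEps{\alg}{f_U} \ge \min\{T_0, \TimeEps{\algzr}{f}\}$; the final inequality then follows because the assumed $T_0$ exceeds the left-hand side. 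Your ``complete $U$ arbitrarily (subject to orthonormality)'' should likewise read ``orthogonal to $x\ind{1},\dots,x\ind{T_0}$'' so that $\tilde{x}\ind{t} = U^\top x\ind{t}$ holds for all $t \le T_0$. With these two fixes your argument is complete and coincides with the paper's.
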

\noindent
We also give a variant of Proposition~\ref{prop:prelims-det-zr} that is 
tailored to lower bounds constructed by means of 
Observation~\ref{obs:lower-bound-strategy} and allows explicit 
accounting of dimensionality. %
\begin{restatable}{proposition}{propPrelimsDetZRdim}
	\label{prop:prelims-det-zr-dim}
	Let $p\in \N\cup\{\infty\}$, $\FclassBlank$ be an orthogonally invariant 
	function class, $f\in\FclassBlank$ with domain of dimension $d$, and 
	$\epsilon>0$. If $\CompEps{\AlgZR\ind{p}}{\{f\}} \ge \T$, then
	\begin{equation*}
	\CompEps{\AlgDet\ind{p}}{\FclassBlank} 
	\ge
	\CompEps{\AlgDet\ind{p}}{\{f_U \mid U\in 
	\orthogonalgroup(d+\T,d)\}} 
	\ge
	\T,
	\end{equation*}
	where $f_U := f(U^\top z)$ and $\orthogonalgroup(d+\T,d)$ is the set of $(d+\T)\times d$ 
	orthogonal matrices, so that $\{f_U \mid U\in 
	\orthogonalgroup(d+\T,d)\}$ contains only function with domain of 
	dimension $d+T$.

\end{restatable}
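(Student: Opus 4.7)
The first inequality is immediate from orthogonal invariance: each $f_U$ lies in $\FclassBlank$, so the supremum of $\TimeEps{\alg}{\cdot}$ over $\{f_U : U\in\orthogonalgroup(d+T,d)\}$ is at most that over all of $\FclassBlank$, and taking the infimum over $\alg\in\AlgDet\ind{p}$ preserves the inequality. The substantive content is the second inequality, which I would prove by a \emph{resisting oracle} construction: for every $\alg\in\AlgDet\ind{p}$, I build a specific $U\in\orthogonalgroup(d+T,d)$ such that, when $\alg$ runs on $f_U$ producing queries $z\ind{1},z\ind{2},\ldots$, the projections $x\ind{t} := U^\top z\ind{t}$ form a $p$th-order zero-respecting sequence with respect to $f$. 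Once this is achieved, the hypothesis $\CompEps{\AlgZR\ind{p}}{\{f\}}\ge T$ gives $\|\nabla f(x\ind{t})\|>\epsilon$ for $t<T$; and since $\nabla f_U(z) = U\nabla f(U^\top z)$ with $U$ having orthonormal columns, $\|\nabla f_U(z\ind{t})\| = \|\nabla f(x\ind{t})\|>\epsilon$, so $\TimeEps{\alg}{f_U}\ge T$ as required.

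The construction of $U$ would proceed on the fly, in lockstep with simulating $\alg$ on $f_U$. Let $\mc{S}_{t-1} := \bigcup_{q\in[p]}\bigcup_{s<t}\support{\nabla^q f(x\ind{s})}$ denote the ``revealed'' coordinates of $\R^d$. Inductively, at the start of step $t$ the columns $\{u_k : k\in\mc{S}_{t-1}\}$ are fixed, and I maintain a subspace $W_{t-1}\subseteq\R^{d+T}$ into which all remaining columns must fall: concretely, the orthogonal complement of the span of $\{u_k : k\in\mc{S}_{t-1}\}\cup\{z\ind{s} : s<t\}$. This invariant guarantees that regardless of how $U$ is ultimately completed, each past projection $U^\top z\ind{s}$ stays supported in $\mc{S}_{s-1}$. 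When $\alg$ issues $z\ind{t}$, I intersect $W_{t-1}$ with $\{z\ind{t}\}^\perp$ and pick orthonormal vectors within that intersection as the new columns $\{u_k : k\in\mc{S}_t\setminus\mc{S}_{t-1}\}$. By construction, $u_k^\top z\ind{t}=0$ for every column that is either newly chosen or still unfixed, so $\support{x\ind{t}}\subseteq\mc{S}_{t-1}$. The response $\nabla^q f_U(z\ind{t})$ then depends only on columns of $U$ indexed by $\support{\nabla^q f(x\ind{t})}\subseteq\mc{S}_t$, which are already fixed; hence it is well-defined and consistent with any completion of $U$.

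The critical dimension count is $\dim W_{t-1}\ge(d+T)-|\mc{S}_{t-1}|-(t-1)$, hence $\dim(W_{t-1}\cap\{z\ind{t}\}^\perp)\ge(d+T)-|\mc{S}_{t-1}|-t$, which is at least $|\mc{S}_t|-|\mc{S}_{t-1}|$ as long as $(d+T)-t\ge|\mc{S}_t|$; since $|\mc{S}_t|\le d$, this holds for all $t\le T$, so the new columns can always be selected. After $T$ steps, $\dim W_T\ge d-|\mc{S}_T|$ remains, exactly what is needed to fill in the remaining $d-|\mc{S}_T|$ columns and complete $U$ into a genuine element of $\orthogonalgroup(d+T,d)$.

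The main technical hurdle I expect is making this dimension bookkeeping airtight at every step and confirming that the partial $U$ always extends to a valid orthogonal matrix. A secondary subtlety is invoking the hypothesis on $\AlgZR\ind{p}$: the constructed sequence $\{x\ind{t}\}$ must coincide with $\alg'[f]$ for some $\alg'\in\AlgZR\ind{p}$, which I would handle by the standard ``pad with zeros when oracle responses diverge from $f$'' trick: such an $\alg'$ is zero-respecting on every input (trivially so on inputs where the queried trajectory distinguishes itself from $f$) and replicates $\{x\ind{t}\}$ when run on $f$, so the assumption $\CompEps{\AlgZR\ind{p}}{\{f\}}\ge T$ applies as required.
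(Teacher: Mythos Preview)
Your proposal is correct and follows essentially the same resisting-oracle approach as the paper: the paper factors the construction into a standalone lemma (building $\algzr\in\AlgZR\ind{p}$ and the orthogonal $U\in\R^{(d+T)\times d}$ simultaneously, with the same dimension count $d'-t-|S_t|\ge |S_t^c|$), then invokes it with $T_0=T$, but the inductive choice of columns, the support bookkeeping, and the final completion are identical to what you describe. Your handling of the two subtleties you flag---the step-by-step dimension accounting and the ``pad with zeros'' construction of a genuine $\alg'\in\AlgZR\ind{p}$---matches the paper's treatment.
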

\noindent
The proofs of Propositions~\ref{prop:prelims-det-zr} 
and~\ref{prop:prelims-det-zr-dim}, given in
Appendix~\ref{sec:prelims-proofs}, build on the classical notion of a
\emph{resisting oracle}~\cite{NemirovskiYu83,Nesterov04}, which we briefly
sketch here.  Let $\alg\in\AlgDet$, and let $f\in\mc{F}$, $f : \R^d \to \R$.
We adversarially select an orthogonal matrix $U \in
\R^{d'\times d}$ (for some finite $d'>d$)
 such that on the function $f_U \defeq f(U^{\top} z) \in \mathcal{F}$ the 
 algorithm $\alg$ behaves as if it was a zero-respecting algorithm.
In particular,  $U$ is sequentially constructed such that for the function
$f_U(z)$ the sequence $U^{\top} \alg[f_U]
\subset \R^d$ is zero-respecting with respect to $f$. Thus, there
exists an algorithm $\algzr \in \AlgDet \cap \AlgZR$ such that
$\algzr[f] = U^{\top} \alg[f_U]$, implying $\TimeEps{\alg}{f_U} =
\TimeEps{\algzr}{f}$. Therefore,
\begin{equation*}
\inf_{\alg \in \AlgDet}\sup_{f \in \mathcal{F}}{  \TimeEps{\alg}{f} } = 
\inf_{\alg \in \AlgDet}\sup_{f \in \mc{F}, U}{  \TimeEps{\alg}{f_U} } = 
\inf_{\alg \in \AlgDet}\sup_{f \in \mathcal{F}}{  \TimeEps{\algzr}{f} } \ge  
\inf_{\alg \in \AlgZR}\sup_{f \in \mathcal{F}}{  \TimeEps{\alg}{f} },
\end{equation*}
giving Proposition~\ref{prop:prelims-det-zr}; 
Proposition~\ref{prop:prelims-det-zr-dim}  follows similarly, and for it we 
may take $d'=d+\T$.

The adversarial rotation argument that yields
Propositions~\ref{prop:prelims-det-zr} and~\ref{prop:prelims-det-zr-dim} 
is more or less apparent in the proofs
of previous lower bounds in convex
optimization~\cite{NemirovskiYu83,WoodworthSr16,ArjevaniShSh17} for
deterministic algorithms. We believe it is 
instructive to separate the proof of lower bounds on
$\CompEps{\AlgZR}{\mathcal{F}}$ and the reduction from $\AlgDet$ to
$\AlgZR$, as the latter holds in great generality. Indeed,
Propositions~\ref{prop:prelims-det-zr} and~\ref{prop:prelims-det-zr-dim} 
hold for any complexity measure
$\TimeEps{\cdot}{\cdot}$ that satisfies
\begin{enumerate}
	\item Orthogonal invariance: for every $f:\R^d \to \R$, every $U\in \R^{d'
		\times d}$ such that $U^{\top} U = I_d$ and every sequence
	$\{z\ind{t}\}_{t \in \N} \subset \R^{d'}$, we have
	\begin{equation*}
	\TimeEps{\{z\ind{t}\}_{t \in \N}}{f(U^{\top} \cdot)}
	= \TimeEps{\{U^{\top} z\ind{t}\}_{t \in \N}}{f}.
	\end{equation*}
	\item ``Stopping time'' invariance: for any $T_0\in\N$, if
	$\TimeEps{\{x\ind{t}\}_{t \in \N}}{f} \le T_0$ then
	$\TimeEps{\{x\ind{t}\}_{t \in \N}}{f} = \TimeEps{\{\hat{x}\ind{t}\}_{t \in
			\N}}{f}$ for any sequence $\{\hat{x}\ind{t}\}_{t \in
		\N}$ such that $\hat{x}\ind{t} = x\ind{t}$ for $t \le T_0$. %
\end{enumerate}
These properties hold for the typical performance measures used in
optimization. Examples include time to $\epsilon$-optimality, in which case
$\TimeEps{\{x\ind{t}\}_{t \in \N}}{f} = \inf\{t\in\N \mid f(x\ind{t}) -
\inf_x f(x) \le \epsilon\}$, and the second-order stationarity desired in many
non-convex optimization 
problems~\cite{NesterovPo06,CarmonDuHiSi18,JinGeNeKaJo17}, 
where for
$\epsilon_1, \epsilon_2 > 0$ we define $\TimeEps{\{x\ind{t}\}_{t
		\in \N}}{f} = \inf\{t\in\N \mid \norms{ \grad f(x\ind{t})} \le
\epsilon_1 \text{ and } \hess f(x\ind{t}) \succeq -\epsilon_2 I \}$.

\subsection{Randomized algorithms}\label{sec:anatomy-randomized}

Proposition~\ref{prop:prelims-det-zr} and \ref{prop:prelims-det-zr-dim} do not apply to randomized
algorithms, as they require the adversary (maximizing choice of $f$) to
simulate the action of $\alg$ on $f$. To handle randomized algorithms, we strengthen the notion of a zero-chain as follows.
\begin{definition}\label{def:robust-zero-chain}
	A function $f : \R^d \rightarrow \R$ is a \emph{robust
		zero-chain} if for every $x \in \R^d$,
	\begin{equation*}
	|x_j| < 1/2, \ \forall j\ge i
	~~ \mbox{implies} ~~
	f(y) = f(y_1, \ldots, y_i, 0, \ldots, 0)
	~~\mbox{for all $y$ in a neighborhood of }x.
	\end{equation*}
\end{definition}
\noindent
A robust zero-chain is also an ``ordinary'' zero-chain. In 
Section~\ref{sec:fullder-random} we replace the adversarial rotation $U$ of  
\S~\ref{sec:anatomy-reduction} with an orthogonal matrix drawn uniformly at 
random, and consider the random function $f_U(x) = f(U^{\top}x)$, where $f$ 
is a robust zero-chain. 
We adapt a lemma by~\citet{WoodworthSr16}, and use it to show that for 
every 
$\alg\in\AlgRand$, $A[f_U]$ satisfies an 
approximate form of Observation~\ref{obs:zero-chain} (w.h.p.) whenever the 
iterates $\alg[f_U]$ have bounded norm. With further modification of $f_U$ to 
handle unbounded iterates, our zero-chain strategy yields a strong 
distributional complexity lower bound on $\AlgRand$. 

\section{Lower bounds for zero-respecting and deterministic algorithms}
\label{sec:fullder-deterministic}

For our first main results, we provide lower bounds on the complexity of all
deterministic algorithms for finding stationary points of smooth,
potentially non-convex functions. By 
Observation~\ref{obs:lower-bound-strategy}
and Proposition~\ref{prop:prelims-det-zr}, to prove a lower bound on
deterministic algorithms it is sufficient to construct a function that is
difficult for zero-respecting algorithms. For fixed $\T > 0$ , we define the 
(unscaled)
hard instance $\fhard : \R^d \to \R$ as
\begin{equation}
\label{eq:fullder-fhard-def}
\fhard(x) = -\compactfunc\left(1\right)
\gausscdf\left(x_{1}\right)+\sum_{i=2}^{\T}\left[
\compactfunc\left(-x_{i-1}\right)\gausscdf\left(-x_{i}\right)-
\compactfunc\left(x_{i-1}\right)\gausscdf\left(x_{i}\right)\right],
\end{equation}
where the component
functions are
\begin{equation*}
  \compactfunc(x)
  \defeq \begin{cases}
  0 & x \le 1/2\\
   \exp\left(1-\frac{1}{\left(2x-1\right)^{2}}\right)  & x>1/2
  \end{cases}
  ~~\mbox{and}~~
  \gausscdf(x)
  = \sqrt{e} \int_{-\infty}^{x} e^{-\half t^{2}}dt.
\end{equation*}

\begin{figure}
	\begin{minipage}[c]{0.5\textwidth}
		\centering
		\includegraphics[width=1\textwidth]{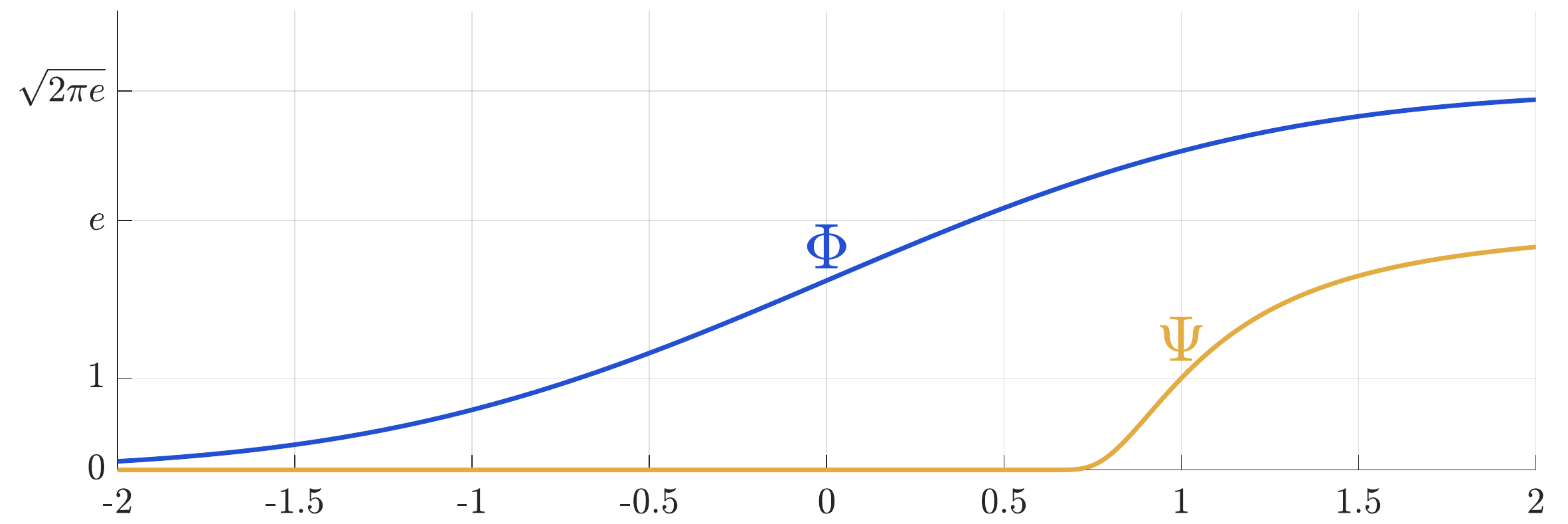} 
		
		\includegraphics[width=1\textwidth]{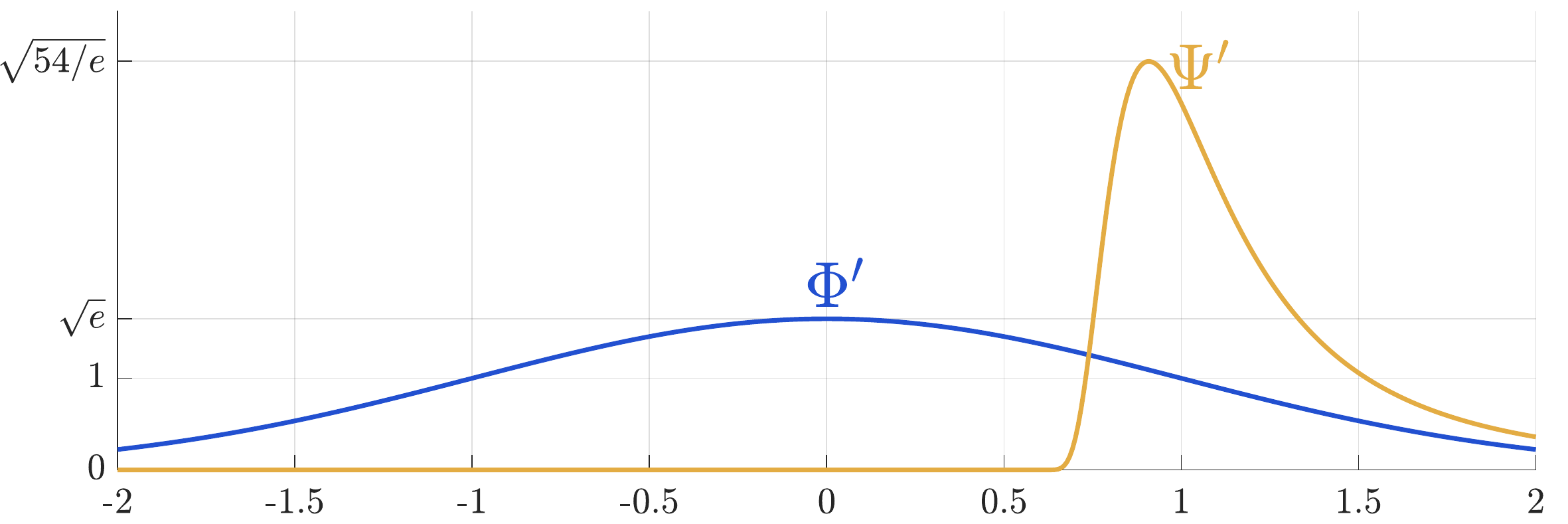}
		
	\end{minipage}
	\begin{minipage}[c]{0.5\textwidth}
		\centering
		\includegraphics[width=1\textwidth,  clip]{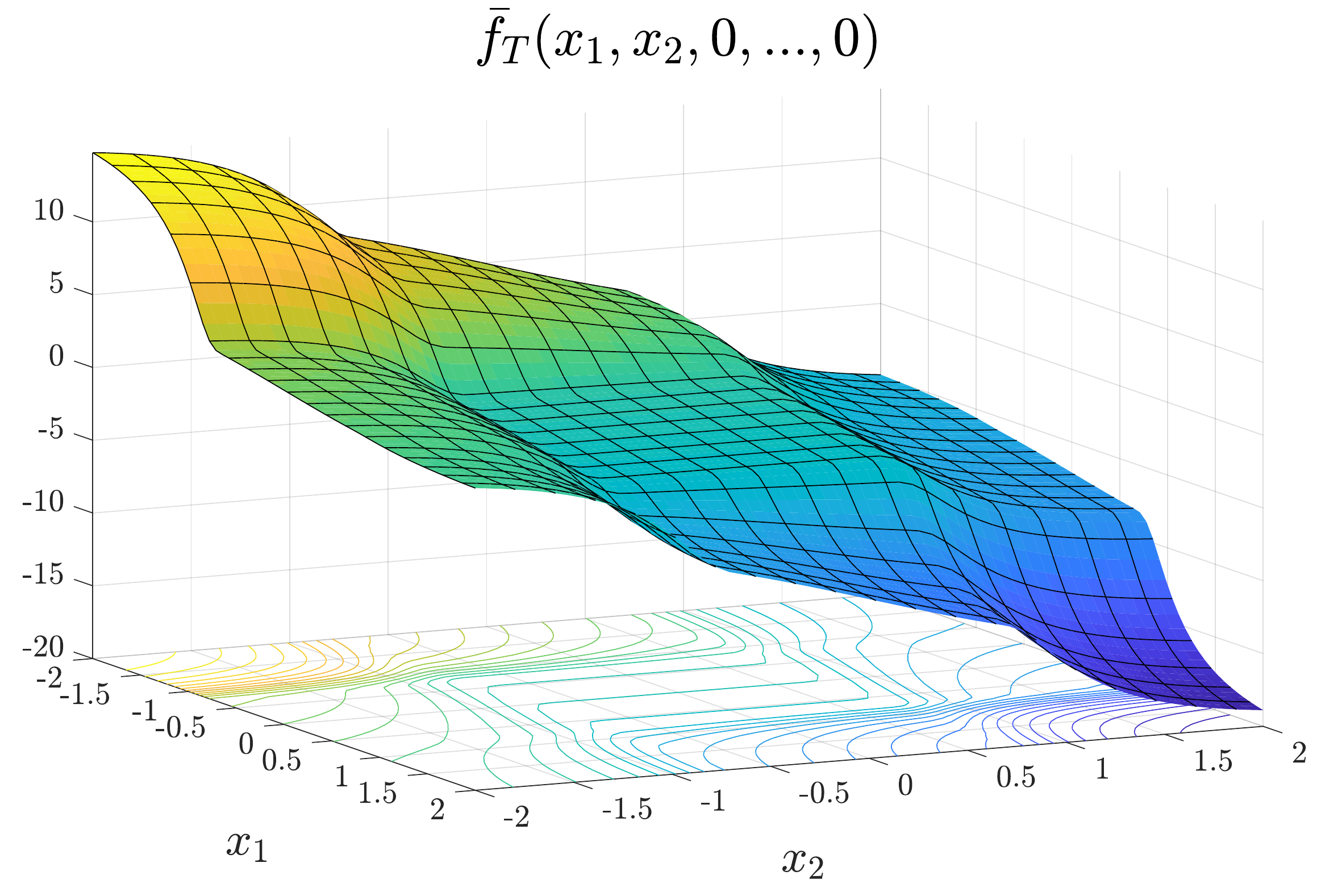} 
	\end{minipage}
	\vspace{6pt}
	\caption{Hard instance for full derivative information. Left: the functions  
		$\compactfunc$ and $\gausscdf$ (top) and their derivatives (bottom). 
		Right: Surface and contour plot of a two-dimensional cross-section of 
		the 
		hard instance $\fhard$. }\label{fig:fullder-construction}
\end{figure}

Our construction, illustrated in  Figure~\ref{fig:fullder-construction},  has 
two key properties. First is that $f$ is a zero-chain 
(Observation~\ref{obs:fhard-is-robust-zc} in the sequel). Second, as
we show in Lemma~\ref{lem:fullder-gradbound}, $\norms{\grad \fhard(x)}$ is
large unless $|x_i|\ge1$ for every $i\in[\T]$. These properties make it hard for
any zero-respecting method to find a stationary point of scaled versions
of $\fhard$, and coupled with
Proposition~\ref{prop:prelims-det-zr}, this gives a lower bound for
deterministic algorithms.

\subsection{Properties of the hard instance}

Before turning to the main theorem of this section, we catalogue 
the important properties of the functions $\compactfunc$,  
$\gausscdf$ and $\fhard$.
\begin{restatable}{lemma}{lemFullderProps}
	\label{lem:fullder-props}
  The functions $\compactfunc$ and $\gausscdf$ satisfy the following.
  \begin{enumerate}[i.]
  \item \label{item:fullder-psiphi-props-zero}
    For all $x \le \half$ and
    all $k \in \N$, $\compactfunc^{(k)}(x) = 0$.
  \item \label{item:fullder-psiphi-props-product}
    For all $x \ge 1$ and $|y| < 1$,
    $\compactfunc(x)\gausscdf'(y) > 1$.
  \item\label{item:fullder-psiphi-props-infinite}
    Both $\compactfunc$ and $\gausscdf$
    are infinitely differentiable,
    and for all $k \in \N$ we have
    \begin{equation*}
      \sup_x |\compactfunc^{(k)}(x)|
      \le \exp\left(\frac{5 k}{2}\log(4 k)\right)
      ~~\mbox{and}~~
      \sup_x |\gausscdf^{(k)}(x)|
      \le \exp\left(\frac{3k}{2} \log \frac{3k}{2} \right).
    \end{equation*}
  \item \label{item:fullder-psiphi-props-bounded} The functions and derivatives
    $\compactfunc, \compactfunc', \gausscdf$ and $\gausscdf'$ are
    non-negative and bounded, with
    \begin{equation*}
      0 \le \compactfunc < e,
      ~~ 0 \le \compactfunc' \le \sqrt{54/e},
      ~~ 0 < \gausscdf < \sqrt{2\pi e},
      ~~ \mbox{and} ~~
      0 < \gausscdf' \le \sqrt{e}.
    \end{equation*}
  \end{enumerate}
\end{restatable}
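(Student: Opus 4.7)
I would prove the four properties more or less in the order \eqref{item:fullder-psiphi-props-bounded}, \eqref{item:fullder-psiphi-props-product}, \eqref{item:fullder-psiphi-props-zero}, \eqref{item:fullder-psiphi-props-infinite}, deferring the derivative bounds of part~\eqref{item:fullder-psiphi-props-infinite} to the end since they are the only genuinely delicate estimates.

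For part~\eqref{item:fullder-psiphi-props-bounded}, everything is one-variable calculus. Non-negativity of $\compactfunc$, $\compactfunc'$, $\gausscdf$, $\gausscdf'$ follows from inspection: on $x>\tfrac12$, $\compactfunc(x)=\exp\bigl(1-(2x-1)^{-2}\bigr)>0$ and $\compactfunc'(x)=4(2x-1)^{-3}\compactfunc(x)\ge 0$, while $\gausscdf'(x)=\sqrt{e}\,e^{-x^2/2}>0$ and $\gausscdf$ is monotone. The upper bound $\compactfunc<e$ is immediate from $1-(2x-1)^{-2}<1$, and $\gausscdf<\sqrt{2\pi e}=\lim_{x\to\infty}\gausscdf(x)$ as well as $\gausscdf'\le\sqrt{e}$ follow directly. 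For $\compactfunc'$, substitute $u=2x-1$ and then $v=u^{-2}$ so that $\compactfunc'(x)=4e\,v^{3/2}e^{-v}$; setting the derivative in $v$ to zero gives $v=\tfrac32$ and a maximum of $4e(3/2)^{3/2}e^{-3/2}=\sqrt{54/e}$, exactly as claimed.

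For part~\eqref{item:fullder-psiphi-props-product}, the plan is to separate the two factors. Since $\compactfunc$ is monotone on $(\tfrac12,\infty)$ by the sign computation above and $\compactfunc(1)=\exp(1-1)=1$, any $x\ge 1$ gives $\compactfunc(x)\ge 1$. Meanwhile $\gausscdf'(y)=\sqrt{e}\,e^{-y^2/2}$, so $|y|<1$ yields $\gausscdf'(y)>\sqrt{e}\cdot e^{-1/2}=1$. Multiplying gives $\compactfunc(x)\gausscdf'(y)>1$. For part~\eqref{item:fullder-psiphi-props-zero}, the classical flat-bump argument applies: write $\compactfunc(x)=e\cdot h((2x-1)^2)$ where $h(t)=e^{-1/t}$ for $t>0$ and $h(0)=0$. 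By the standard (induction) argument, $h\in\mc{C}^\infty(\R)$ with $h^{(k)}(0)=0$ for all $k$, since each derivative is of the form $P(1/t)e^{-1/t}$ for a polynomial $P$, which vanishes as $t\downarrow 0$ faster than any power. Composition with the polynomial $(2x-1)^2$ preserves this and shows all one-sided derivatives of $\compactfunc$ at $x=\tfrac12$ vanish, confirming $\compactfunc\in\mc{C}^\infty$ and $\compactfunc^{(k)}\equiv 0$ on $(-\infty,\tfrac12]$.

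The main work is the derivative bounds in part~\eqref{item:fullder-psiphi-props-infinite}, which I expect to be the one delicate step. For $\gausscdf$, one computes $\gausscdf^{(k)}(x)=(-1)^{k-1}\sqrt{e}\,H_{k-1}(x)e^{-x^2/2}$, where $H_{k-1}$ is the probabilist's Hermite polynomial of degree $k-1$. The needed sup-norm bound $\|H_{k-1}(x)e^{-x^2/2}\|_\infty$ follows from the classical estimate $|H_n(x)|e^{-x^2/2}\le C\sqrt{n!}\le C\,(n/e)^{n/2}\sqrt{n}$ (which is available by Cramér's bound or by using the generating function to express $H_n(x)e^{-x^2/2}$ via a Cauchy integral on a circle of optimal radius $\sqrt{n}$); a short arithmetic check then converts this into the stated $\exp(\tfrac{3k}{2}\log\tfrac{3k}{2})$. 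For $\compactfunc^{(k)}$, my plan is to use the Cauchy integral formula around a point $x>\tfrac12$: extend $\compactfunc$ analytically to a disk avoiding the singularity at $x=\tfrac12$ of the complexified $(2z-1)^{-2}$, pick a contour of radius $r\sim k^{-1/2}$ (or similar, tuned to balance the decay $|\compactfunc|\le e$ against the $k!/r^k$ from differentiation), and use $|\compactfunc^{(k)}(x)|\le k!\,r^{-k}\sup_{|z-x|=r}|\compactfunc(z)|$. Optimizing $r$ in Stirling gives $\sup|\compactfunc^{(k)}|\le\exp(\tfrac{5k}{2}\log(4k))$ up to reconciling the constants. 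Alternatively one can invoke Faà di Bruno on the composition $e^{1-u(x)}$ with $u(x)=(2x-1)^{-2}$; I would use Cauchy's approach because it yields the stretched-exponential bound more transparently. Once these estimates are in place, the lemma is complete.
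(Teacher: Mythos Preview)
Your treatment of parts~\ref{item:fullder-psiphi-props-zero}, \ref{item:fullder-psiphi-props-product}, and~\ref{item:fullder-psiphi-props-bounded} matches the paper, which declares them immediate. For part~\ref{item:fullder-psiphi-props-infinite} you take a genuinely different route. The paper argues by explicit induction on the form of the derivatives: it shows $\gausspdf^{(k)}(t)=\big(\sum_{i=0}^k c_i^{(k)}t^i\big)\gausspdf(t)$ with $|c_i^{(k)}|\le(2\max\{i,1\})^k$ and then bounds each monomial $t^i\gausspdf(t)$; similarly it shows $\compactfunc^{(k)}(x)=\big(\sum_{i=1}^k c_i^{(k)}(2x-1)^{-(k+2i)}\big)\compactfunc(x)$ with $|c_i^{(k)}|\le 6^k(2i+k)^k$, then optimizes $t^{k+2i}e^{-t^2}$ after substituting $t=(2x-1)^{-1}$. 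Your Hermite/Cram\'er route for $\gausscdf$ and Cauchy-estimate route for $\compactfunc$ are more conceptual and also work, at the cost of importing outside inequalities rather than keeping everything elementary and self-contained; the paper's recursions make the explicit constants completely transparent.

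One point in your Cauchy plan needs sharpening. The balance you describe, ``$|\compactfunc|\le e$ against $k!/r^k$'', is not the operative one: with only the uniform bound $|\compactfunc|\le e$ on the contour you would take $r$ as large as the singularity at $\tfrac12$ permits, giving $|\compactfunc^{(k)}(x)|\lesssim k!(x-\tfrac12)^{-k}$, which blows up as $x\downarrow\tfrac12$ with nothing to compensate. What actually makes the argument go through is that on a contour of radius a fixed fraction of $s=2x-1$ one has $\mathrm{Re}\bigl((2z-1)^{-2}\bigr)\ge c/s^2$ for some $c>0$, hence $|\compactfunc(z)|\le e\cdot e^{-c/s^2}$ there; then $k!\,s^{-k}e^{-c/s^2}$ is maximized over $s>0$ at $s\sim k^{-1/2}$, which recovers your $r\sim k^{-1/2}$ and yields the required $k^{O(k)}$ bound with room to spare under $\exp(\tfrac{5k}{2}\log(4k))$. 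With that correction your approach is sound.
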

\noindent
We prove Lemma~\ref{lem:fullder-props} in
Appendix~\ref{sec:proof-fullder-props}. The remainder  
our development relies on $\compactfunc$ and $\gausscdf$ only 
through Lemma~\ref{lem:fullder-props}. Therefore, the precise choice of 
$\compactfunc, \gausscdf$ is not particularly special; any two functions 
with properties similar to Lemma~\ref{lem:fullder-props} will yield similar 
lower bounds.

The key consequence of
Lemma~\ref{lem:fullder-props}.\ref{item:fullder-psiphi-props-zero} is that
the function $f$ is a robust zero-chain (see Definition~\ref{def:robust-zero-chain}) and consequently also a zero-chain (Definition~\ref{def:zero-chain}):

\begin{observation}\label{obs:fhard-is-robust-zc}
  For any $j>1$, if $|x_{j-1}|,|x_j|<1/2$ then $\fhard(y) = 
  \fhard(y_1,\ldots,y_{j-1}, 0, y_{j+1}, \ldots, y_\T)$ for all $y$ in a 
  neighborhood of $x$.
\end{observation}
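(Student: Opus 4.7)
The plan is to identify the two terms in the sum defining $\fhard$ that depend on the $j$th coordinate, and to show that under the hypothesis both of them vanish identically in a neighborhood of $x$. The only obstacle is being careful about the edge case $j=\T$ and about what ``vanish'' means, but the argument is otherwise pure bookkeeping via Lemma~\ref{lem:fullder-props}.\ref{item:fullder-psiphi-props-zero}.

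First I would inspect the definition~\eqref{eq:fullder-fhard-def} and note that $y_j$ appears in at most two summands: the $i=j$ summand
$\compactfunc(-y_{j-1})\gausscdf(-y_j) - \compactfunc(y_{j-1})\gausscdf(y_j)$,
and (only when $j<\T$) the $i=j+1$ summand
$\compactfunc(-y_j)\gausscdf(-y_{j+1}) - \compactfunc(y_j)\gausscdf(y_{j+1})$.
Every other term in $\fhard$ involves only coordinates other than $y_j$, hence is trivially unaffected by replacing $y_j$ by $0$.

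Next I would pick a neighborhood $U$ of $x$ on which $|y_{j-1}|<1/2$ and $|y_j|<1/2$; this exists by continuity of the coordinate projections and the strict inequalities in the hypothesis. Then $\pm y_{j-1}\le 1/2$ and $\pm y_j\le 1/2$ on $U$, so by Lemma~\ref{lem:fullder-props}.\ref{item:fullder-psiphi-props-zero} applied at $k=0$ we have $\compactfunc(\pm y_{j-1})=0$ and $\compactfunc(\pm y_j)=0$ for every $y\in U$.

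The conclusion follows immediately: the $i=j$ summand is a combination with coefficients $\compactfunc(\pm y_{j-1})=0$, so it equals $0$ on $U$ regardless of $y_j$; likewise, when $j<\T$, the $i=j+1$ summand is a combination with coefficients $\compactfunc(\pm y_j)=0$, so it equals $0$ on $U$. When $y_j$ is replaced by $0$ the same reasoning applies (with $\compactfunc(0)=0$ handling the $i=j+1$ term), so both sides of the claimed identity have identical nonzero terms and identical zero terms. This yields $\fhard(y)=\fhard(y_1,\ldots,y_{j-1},0,y_{j+1},\ldots,y_\T)$ on $U$, as desired.
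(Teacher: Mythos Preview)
Your proof is correct and is precisely the argument the paper has in mind: the observation is stated without proof in the paper, merely flagged as a ``key consequence of Lemma~\ref{lem:fullder-props}.\ref{item:fullder-psiphi-props-zero},'' and your write-up spells out exactly that consequence. One tiny nitpick: rather than invoking Lemma~\ref{lem:fullder-props}.\ref{item:fullder-psiphi-props-zero} ``at $k=0$,'' you can simply cite the definition of $\compactfunc$, which already gives $\compactfunc(x)=0$ for $x\le 1/2$.
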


\noindent
Applying Observation~\ref{obs:fhard-is-robust-zc} for $j=i+1,\ldots,\T$ gives that $\fhard$ is a robust zero-chain by Definition~\ref{def:robust-zero-chain}. Taking derivatives of $\fhard(x_1,\ldots,x_i,0,\ldots,0)$ with respect to $x_j$, $j>i$, shows that $\fhard$ is also a zero-chain by Definition~\ref{def:zero-chain}. Thus, 
Observation~\ref{obs:zero-chain} shows that any
zero-respecting algorithm operating on $\fhard$ requires $T+1$ iterations to
find a point where $x_{T} \neq 0$.

Next, we establish the ``large gradient property'' that $\nabla
\fhard(x)$ must be large if any coordinate of $x$ is near zero.

\begin{lemma}
  \label{lem:fullder-gradbound}
  If $|x_i| < 1$ for any $i \le T$, then there exists $j \le i$ such that
  $|x_j| < 1$ and
  \begin{equation*}
    \norm{\grad \fhard(x)}
    \ge \left|\frac{\del }{\del x_j} \fhard(x)\right| > 1.
  \end{equation*}
\end{lemma}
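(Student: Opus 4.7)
The proof will exploit a sign structure of the partial derivatives combined with the pointwise product lower bound in Lemma~\ref{lem:fullder-props}.\ref{item:fullder-psiphi-props-product}.

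First, I would compute $\partial_j \fhard(x)$ explicitly. For $1 < j < \T$, only the summands with $i = j$ and $i = j+1$ contribute, giving
\begin{equation*}
\partial_j \fhard(x) = -\compactfunc(-x_{j-1})\gausscdf'(-x_j) - \compactfunc(x_{j-1})\gausscdf'(x_j) - \compactfunc'(-x_j)\gausscdf(-x_{j+1}) - \compactfunc'(x_j)\gausscdf(x_{j+1}).
\end{equation*}
For $j=1$ the same formula applies with the convention $x_0 \defeq 1$, so the first pair collapses to $-\compactfunc(1)\gausscdf'(x_1)$ (the $\compactfunc(-1)$ term vanishes by Lemma~\ref{lem:fullder-props}.\ref{item:fullder-psiphi-props-zero}); for $j=\T$ only the two $i=\T$ terms appear. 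The crucial observation, using Lemma~\ref{lem:fullder-props}.\ref{item:fullder-psiphi-props-bounded}, is that $\compactfunc, \compactfunc', \gausscdf, \gausscdf' \ge 0$, so every term of $\partial_j \fhard(x)$ is non-positive. Consequently $|\partial_j \fhard(x)|$ equals the sum of the absolute values of its summands, and any individual summand provides a lower bound on $|\partial_j \fhard(x)|$.

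Given $|x_i| < 1$ with $i \le \T$, I would select
\begin{equation*}
j \defeq \min\bigl\{ k \le i \,:\, |x_k| < 1 \bigr\},
\end{equation*}
which exists because $i$ is in this set. By minimality, either $j = 1$ or $|x_{j-1}| \ge 1$. In the first case, the summand $-\compactfunc(1)\gausscdf'(x_1)$ appears in $\partial_1 \fhard(x)$, and applying Lemma~\ref{lem:fullder-props}.\ref{item:fullder-psiphi-props-product} with $x=1$ and $y = x_1$ (using $|x_1| < 1$) gives $\compactfunc(1)\gausscdf'(x_1) > 1$. In the second case, either $x_{j-1} \ge 1$, in which case Lemma~\ref{lem:fullder-props}.\ref{item:fullder-psiphi-props-product} with $y = x_j$ yields $\compactfunc(x_{j-1})\gausscdf'(x_j) > 1$, or $x_{j-1} \le -1$, in which case the same lemma applied with $-x_{j-1}$ and $-x_j$ yields $\compactfunc(-x_{j-1})\gausscdf'(-x_j) > 1$. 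In every case, one of the summands in $\partial_j \fhard(x)$ has absolute value strictly greater than $1$, and since all summands share a sign, $|\partial_j \fhard(x)| > 1$. The bound $\norm{\grad \fhard(x)} \ge |\partial_j \fhard(x)|$ then completes the argument.

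The main obstacle is not conceptual but rather ensuring the sign structure is verified carefully and that the edge cases $j = 1$ and (for completeness) $j = \T$ are handled uniformly. Both edge cases are accommodated by the $x_0 = 1$ convention together with the vanishing property $\compactfunc(x) = 0$ for $x \le 1/2$, so the generic formula for $\partial_j \fhard$ suffices throughout.
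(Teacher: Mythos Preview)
Your proposal is correct and follows essentially the same approach as the paper: choose the minimal $j\le i$ with $|x_j|<1$, observe that all four terms of $\partial_j\fhard(x)$ are non-positive by Lemma~\ref{lem:fullder-props}.\ref{item:fullder-psiphi-props-bounded}, and then apply Lemma~\ref{lem:fullder-props}.\ref{item:fullder-psiphi-props-product} to whichever of $\compactfunc(\pm x_{j-1})\gausscdf'(\pm x_j)$ is active. The only cosmetic difference is that the paper collapses your two sign cases into the single expression $\compactfunc(|x_{j-1}|)\gausscdf'(x_j\sign(x_{j-1}))$.
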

\begin{proof}
  We take $j \le i$ to be the smallest $j$ for which $|x_j| < 1$, so that
  $|x_{j-1}| \ge 1$ (where we use the shorthand $x_0 \equiv 1$). Therefore,
 we have
  \begin{flalign}
    \frac{\del \fhard}{\del x_j}(x) & = -\compactfunc\left(-x_{j-1}\right)\gausscdf'\left(-x_{j}\right)
    -\compactfunc\left(x_{j-1}\right)\gausscdf'\left(x_{j}\right)
    -\compactfunc'\left(-x_{j}\right)\gausscdf\left(-x_{j+1}\right) 
    -\compactfunc'\left(x_{j}\right)\gausscdf\left(x_{j+1}\right)
    \label{eq:fullder-dfdj}  \nonumber \\ &
    \stackrel{(i)}{\le}
    -\compactfunc\left(-x_{j-1}\right)\gausscdf'\left(-x_{j}\right)
    -\compactfunc\left(x_{j-1}\right)\gausscdf'\left(x_{j}\right)
    \stackrel{(ii)}{=}
    -\compactfunc(|x_{j-1}|)\gausscdf'\left(x_{j}\sign(x_{j-1})\right)
    \stackrel{(iii)}{<} -1. \nonumber
  \end{flalign}
  In the chain of inequalities, inequality $(i)$ follows because
  $\compactfunc'(x)\gausscdf(y) \ge 0$ for every $x,y$; inequality $(ii)$
  follows because $\compactfunc(x) = 0$ for $x \le 1/2$, while equality
  $(iii)$ follows from
  Lemma~\ref{lem:fullder-props}.\ref{item:fullder-psiphi-props-product} and the
  pairing of $|x_j| < 1$ and $|x_{j-1}| \ge 1$.
\end{proof}

 Finally, we verify that $\fhard$ meets the smoothness and boundedness 
 requirements of the function classes we consider.

\begin{restatable}{lemma}{lemFullderBounded}
  \label{lem:fullder-bounded}
  The function $\fhard$ satisfies the following.
  \begin{enumerate}[i.]
  \item \label{item:fullder-funcbound}
    We have $\fhard(0) - \inf_x \fhard(x) \le 12\T$.
  \item \label{item:fullder-gradbound}
    For all $x \in \R^d$,
    $\norm{\grad \fhard (x)} \le 23\sqrt{\T}$.
  \item \label{item:fullder-lipschitz} For every $p \ge 1$,
    the $p$-th order derivatives of $\fhard$ are
    $\smC{p}$-Lipschitz continuous, where $\smC{p} \le
    \exp(\frac{5}{2} p \log p + c p)$ for a numerical constant
    $c<\infty$.
  \end{enumerate}
\end{restatable}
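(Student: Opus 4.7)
My plan is to exploit the fact that $\fhard$ is a sum of $2T-1$ simple ``atoms''—each of the form $\compactfunc(\pm x_{i-1})\gausscdf(\pm x_i)$, plus the initial term $-\compactfunc(1)\gausscdf(x_1)$—and that each coordinate $x_j$ appears in at most four such atoms (as $x_{i-1}$ when $i=j+1$ and as $x_i$ when $i=j$, in both sign combinations). All three claims will follow by combining this sparse dependence structure with the scalar bounds collected in Lemma~\ref{lem:fullder-props}.

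For claim~(\ref{item:fullder-funcbound}), I will split $\fhard$ into its positive part $\sum_{i=2}^T \compactfunc(-x_{i-1})\gausscdf(-x_i)$ and its negative part $-\compactfunc(1)\gausscdf(x_1) - \sum_{i=2}^T \compactfunc(x_{i-1})\gausscdf(x_i)$. Using $0\le\compactfunc<e$ and $0<\gausscdf<\sqrt{2\pi e}$ from Lemma~\ref{lem:fullder-props}.\ref{item:fullder-psiphi-props-bounded}, the negative part is bounded below by $-\sqrt{2\pi e}-(T-1)e\sqrt{2\pi e}$, while evaluating at the origin yields $\fhard(0)=-\compactfunc(1)\gausscdf(0)=-\sqrt{e\pi/2}$ (since $\compactfunc(1)=1$). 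Subtracting gives a gap of at most $e\sqrt{2\pi e}\,T\le 12T$ after a short numerical check.

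For claim~(\ref{item:fullder-gradbound}), direct differentiation gives $\partial \fhard/\partial x_j$ as a sum of (at most) four terms of two kinds: $\compactfunc(\pm x_{j-1})\gausscdf'(\pm x_j)$ and $\compactfunc'(\pm x_j)\gausscdf(\pm x_{j\pm 1})$. The key observation is that $\compactfunc$ and $\compactfunc'$ vanish on $(-\infty,1/2]$ by Lemma~\ref{lem:fullder-props}.\ref{item:fullder-psiphi-props-zero}, so in each pair of opposite-sign summands at most one is nonzero at any $x$. Together with the bounds on $\compactfunc,\compactfunc',\gausscdf,\gausscdf'$ from Lemma~\ref{lem:fullder-props}.\ref{item:fullder-psiphi-props-bounded}, this yields $|\partial_j\fhard(x)|\le e^{3/2}+\sqrt{108\pi}\le 23$ uniformly in $x$ and $j$. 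Summing squares of at most $T$ coordinates then gives $\|\nabla\fhard(x)\|\le 23\sqrt{T}$.

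Claim~(\ref{item:fullder-lipschitz}) is the main obstacle, since we must control the Lipschitz constant of $\nabla^p\fhard$ uniformly over all directions, i.e.\ the operator norm of the order-$(p+1)$ tensor $\nabla^{p+1}\fhard(x)$. By~\eqref{eq:prelims-eckhart-young-tensors} it suffices to bound $|\nabla^{p+1}\fhard(x)[v^{\otimes(p+1)}]|$ for $\|v\|\le 1$. Since each atom $g_i(y_1,y_2)=\compactfunc(\pm y_1)\gausscdf(\pm y_2)$ depends on only two coordinates $x_{i-1},x_i$, by the binomial identity
\begin{equation*}
\nabla^{p+1}g_i(x_{i-1},x_i)[v^{\otimes(p+1)}]=\sum_{a=0}^{p+1}\binom{p+1}{a}(\pm)^{a+b}\compactfunc^{(a)}(\pm x_{i-1})\gausscdf^{(p+1-a)}(\pm x_i)\,v_{i-1}^{\,a}v_i^{\,p+1-a}.
\end{equation*}
Applying Lemma~\ref{lem:fullder-props}.\ref{item:fullder-psiphi-props-infinite} bounds each $\compactfunc^{(a)}\gausscdf^{(p+1-a)}$ term by $\exp\!\bigl(\tfrac{5a}{2}\log(4a)+\tfrac{3(p+1-a)}{2}\log\tfrac{3(p+1-a)}{2}\bigr)$, which is maximized at $a=p+1$ and gives $\exp\!\bigl(\tfrac{5(p+1)}{2}\log(4(p+1))\bigr)$. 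To sum over the $T-1$ atoms I will use the inequality $|v_{i-1}|^a|v_i|^{p+1-a}\le |v_{i-1}|^{p+1}+|v_i|^{p+1}$ (trivial for $a\in\{0,p+1\}$; AM--GM otherwise), together with $\sum_j|v_j|^{p+1}\le\bigl(\sum_j v_j^2\bigr)^{(p+1)/2}\le 1$ for $p\ge 1$. This controls the telescoping over atoms by a constant independent of $T$. Folding $\binom{p+1}{a}\le 2^{p+1}$ and the $p+2$ values of $a$ into the bound gives
\begin{equation*}
\opnorm{\nabla^{p+1}\fhard(x)}\le 2(p+2)2^{p+1}\exp\!\Bigl(\tfrac{5(p+1)}{2}\log(4(p+1))\Bigr)=\exp\!\Bigl(\tfrac{5}{2}p\log p + O(p)\Bigr),
\end{equation*}
as claimed, with the initial term $-\compactfunc(1)\gausscdf(x_1)$ absorbed into the constant since only $\gausscdf^{(p+1)}$ appears there. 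The delicate part will be making sure the combinatorial factors and the $\sum_j|v_j|^{p+1}\le 1$ estimate combine to leave a clean $\exp(\tfrac{5}{2}p\log p + cp)$ bound.
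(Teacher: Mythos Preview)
Your proposal is correct. Parts~\ref{item:fullder-funcbound} and~\ref{item:fullder-gradbound} are essentially identical to the paper's argument: the same bounds $0\le\compactfunc<e$, $0<\gausscdf<\sqrt{2\pi e}$, $0\le\compactfunc'\le\sqrt{54/e}$, $0<\gausscdf'\le\sqrt{e}$ are used, and for the gradient bound the same observation that $\compactfunc,\compactfunc'$ vanish on $(-\infty,1/2]$ kills one term in each $\pm$ pair.

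For part~\ref{item:fullder-lipschitz} you and the paper organize the order-$(p+1)$ directional derivative differently. You decompose $\fhard$ into two-variable atoms $\compactfunc(\pm x_{i-1})\gausscdf(\pm x_i)$, expand each atom via the binomial identity, bound $|\compactfunc^{(a)}\gausscdf^{(p+1-a)}|$ uniformly by $\exp\bigl(\tfrac{5(p+1)}{2}\log(4(p+1))\bigr)$, and then collapse the sum over atoms using Young's inequality $|v_{i-1}|^a|v_i|^{p+1-a}\le|v_{i-1}|^{p+1}+|v_i|^{p+1}$ together with $\sum_j|v_j|^{p+1}\le\|v\|_2^{p+1}=1$. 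The paper instead observes that $\partial_{i_1}\cdots\partial_{i_{p+1}}\fhard(x)\ne 0$ only when all indices lie in a set $\{i,i\pm1\}$ for some $i$, reindexes the full sum by the $2^{p+1}-1$ ``shift patterns'' $\delta\in\{0,1\}^p\cup\{0,-1\}^p$, and bounds the resulting inner sum $\bigl|\sum_i v_i^{p+1-n}v_{i\pm1}^n\bigr|\le 1$ by Cauchy--Schwarz. Both routes yield $\exp(\tfrac{5}{2}p\log p+cp)$; your atom-by-atom binomial expansion is arguably more direct (separability of each atom is immediate), while the paper's reindexing keeps the $v$-dependent factor bounded exactly by $1$ without the Young-inequality slack. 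One cosmetic issue: the ``$(\pm)^{a+b}$'' in your binomial display has an undefined $b$, presumably meant to be $p+1-a$.
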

\noindent
The proof of Lemma~\ref{lem:fullder-bounded} is technical, so we
defer it to Appendix~\ref{sec:fullder-bounded-proof}. In the lemma,  
Properties~\ref{item:fullder-funcbound} and~\ref{item:fullder-lipschitz} allow 
us to guarantee that appropriately scaled versions of $\fhard$ are in 
$\Fclass{p}$. 
Property is~\ref{item:fullder-gradbound} is necessary for analysis of the 
randomized construction in Section~\ref{sec:fullder-random}.

\subsection{Lower bounds for zero-respecting and deterministic 
algorithms}\label{sec:fullder-deterministic-statement}

We can now state and prove a lower bound for finding stationary points of
$p$th order smooth functions using full derivative information and
zero-respecting algorithms (the class $\AlgZR$).
Proposition~\ref{prop:prelims-det-zr} transforms this bound into one on all
deterministic algorithms (the class $\AlgDet$).
\begin{theorem}\label{thm:fullder-simple}
  There exist numerical constants $0 < c_0, c_1 < \infty$
  such that the following lower bound holds. 
  Let $p \ge 1$, $p \in \N$, and let $\DeltaF$, $\Smp$, and
  $\epsilon$ be positive. Then
  \begin{equation*}
    \CompEps{\AlgDet}{\Fclass{p}} \ge \CompEps{\AlgZR}{\Fclass{p}}
    \ge c_0 \DeltaF \left(\frac{\Smp}{\smC{p}}\right)^{1/p}
    \epsilon^{-\frac{1+p}{p}}
  \end{equation*}
  where $\smC{p} \le e^{\frac{5}{2} p \log p + c_1 p}$. The lower bound 
  holds even if we restrict $\Fclass{p}$ to functions whose domain has
  dimension $1+{2c_0 \DeltaF (\Smp / \smC{p} )^{1/p}
  \epsilon^{-\frac{1+p}{p}}}$.
\end{theorem}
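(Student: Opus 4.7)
The plan is to apply the zero-chain lower-bound template of Observation~\ref{obs:lower-bound-strategy} to an appropriately scaled copy of $\fhard$, and then invoke Proposition~\ref{prop:prelims-det-zr} to lift the bound from $\AlgZR$ to $\AlgDet$. The first inequality of the theorem is immediate from Proposition~\ref{prop:prelims-det-zr} and the orthogonal invariance of $\Fclass{p}$, so the content is the lower bound on $\CompEps{\AlgZR}{\Fclass{p}}$.

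For the second inequality I would introduce the rescaling $\hhardVar(x) \defeq \lambda\,\fhard(x/\sigma)$ on $\R^\T$, with $\lambda,\sigma > 0$ to be chosen. The chain rule combined with Lemma~\ref{lem:fullder-gradbound} and Lemma~\ref{lem:fullder-bounded} yield
\begin{align*}
\hhardVar(0) - \inf_x \hhardVar(x) &\le 12\lambda\T, \\
\opnorm{\deriv{p+1}\hhardVar(x)} &\le (\lambda/\sigma^{p+1})\,\smC{p} \quad\text{for all } x\in\R^\T, \\
\norm{\grad\hhardVar(x)} &> \lambda/\sigma \quad\text{whenever some } |x_j|<\sigma,\ j\le\T.
\end{align*}
The three hypotheses of Observation~\ref{obs:lower-bound-strategy} --- namely $\hhardVar\in\Fclass{p}$ and $\norm{\grad\hhardVar(x)}>\epsilon$ on $\{x : x_\T = 0\}$ --- therefore translate into the linear-in-$\lambda$ constraints $12\lambda\T \le \DeltaF$, $(\lambda/\sigma^{p+1})\,\smC{p}\le \Smp$, and $\lambda/\sigma > \epsilon$ (the last uses that $x_\T = 0$ certifies the third display with $j=\T$). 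Saturating the latter two minimizes $\lambda$ and yields $\sigma = (\epsilon\smC{p}/\Smp)^{1/p}$ and $\lambda = \epsilon\sigma = \epsilon^{(p+1)/p}(\smC{p}/\Smp)^{1/p}$; the remaining constraint then allows $\T = \lfloor (\DeltaF/12)(\Smp/\smC{p})^{1/p}\epsilon^{-(p+1)/p}\rfloor$. Since $\hhardVar$ inherits the zero-chain property of $\fhard$ (Observation~\ref{obs:fhard-is-robust-zc}) under rescaling, Observation~\ref{obs:lower-bound-strategy} yields $\CompEps{\AlgZR}{\Fclass{p}} > \T$, which gives the claimed bound for a suitable $c_0 > 0$; the bound on $\smC{p}$ is Lemma~\ref{lem:fullder-bounded}.\ref{item:fullder-lipschitz}.

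The dimension refinement follows by using Proposition~\ref{prop:prelims-det-zr-dim} in place of Proposition~\ref{prop:prelims-det-zr}: it restricts the $\AlgDet$ lower bound to the orthogonally rotated family $\{\hhardVar_U : U\in\orthogonalgroup(2\T,\T)\}\subset\Fclass{p}$, whose members have domain of dimension $2\T \le 1 + 2c_0\DeltaF(\Smp/\smC{p})^{1/p}\epsilon^{-(p+1)/p}$. The only real obstacle is the bookkeeping of constants to simultaneously satisfy all three constraints with the correct $\epsilon$-scaling; Lemmas~\ref{lem:fullder-props}--\ref{lem:fullder-bounded} already package everything nontrivial about $\fhard$, so the rest is a direct instantiation of the framework of Section~\ref{sec:anatomy-strategy}.
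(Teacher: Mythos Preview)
Your proposal is correct and follows essentially the same approach as the paper: scale $\fhard$ by $\lambda=\Smp\sigma^{p+1}/\smC{p}$ and $\sigma=(\smC{p}\epsilon/\Smp)^{1/p}$ (which coincide with your choices after simplification), verify the three hypotheses of Observation~\ref{obs:lower-bound-strategy} via Lemmas~\ref{lem:fullder-gradbound} and~\ref{lem:fullder-bounded}, and then invoke Proposition~\ref{prop:prelims-det-zr-dim} for the dimension claim. The only cosmetic discrepancy is that the paper applies Proposition~\ref{prop:prelims-det-zr-dim} with the bound $T+1$ (since $\CompEps{\AlgZR}{\{f\}}\ge T+1$), obtaining ambient dimension $2T+1$ rather than your $2T$; either version fits under the stated dimension $1+2c_0\DeltaF(\Smp/\smC{p})^{1/p}\epsilon^{-(p+1)/p}$.
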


Before we prove the theorem, a few remarks are in order. First, our lower
bound matches the upper bound~\eqref{eq:pth-order-reg-ub} that
$p$th-order regularization schemes achieve~\cite{BirginGaMaSaTo17}, up to a
constant depending polynomially on $p$. Thus, although our lower bound
applies to algorithms given access to $\deriv{q} f(x)$ for all $q \in \N$,
only the first $p$ derivatives are necessary to achieve minimax optimal
scaling in $\DeltaF, \Sm{p}$, and $\epsilon$.

Second, inspection of the proof shows that we actually bound smaller
quantities than the complexity defined in
Eq.~\eqref{eq:complexity-def}. Indeed, we show that taking $T \gtrsim
\DeltaF (\Smp / \smC{p})^{1/p} \epsilon^{-\frac{1 + p}{p}}$ in the
construction~\eqref{eq:fullder-fhard-def} and appropriately scaling $\fhard$
yields a function $f : \R^\T \to \R$ that has $\Smp$-Lipschitz continuous
$p$th derivative, and for which \emph{any} zero-respecting algorithm
generates iterates such that $\norms{\grad f (x\ind{t})} 
> \epsilon$ for every $t \le \T$. That is,
\begin{equation*}
  \inf_{\alg \in \AlgZR} \TimeEps{\alg}{f}
  > T \gtrsim
  \DeltaF \Smp^{1/p} \epsilon^{-\frac{1 + p}{p}},
\end{equation*}
which is stronger than a lower bound on $\CompEps{\AlgZR}{\Fclass{p}}$.
Combined with the reduction in
Proposition~\ref{prop:prelims-det-zr-dim}, this implies that for any
deterministic algorithm $\alg\in\AlgDet$ there exists orthogonal
$U\in\R^{(2\T+1) \times \T}$ for which $f_U(x) = f(U^{\top} x)$ is difficult, 
\ie
$\TimeEps{\alg}{f(U^{\top} \cdot)} > \T$. 

Finally, the scaling of $\smC{p}$ with $p$ may appear strange, or perhaps
extraneous. We provide two viewpoints on this. First, one expects that the
smoothness constants $\Smp$ should grow quickly as $p$ grows; for
$\mc{C}^\infty$ functions such as $\phi(t) = e^{-t^2}$ or $\phi(t) =
\log(1+e^t)$, $\sup_t |\phi^{(p)}(t)|$ grows super-exponentially in
$p$. Indeed, $\smC{p}$ is the Lipschitz constant of the $p$th derivative of
$\fhard$. Second, the cases of main practical interest are $p \in \{1, 2\}$, where  
$\smC{p}^{1/p} \lesssim p^\frac{5}{2}$ can be considered a
numerical constant. This is because, for $p\ge3$, the only known methods 
with dimension-free rate of convergence $\epsilon^{-(p+1)/p}$~\cite{BirginGaMaSaTo17} 
require full access to third derivatives, which is generally impractical. 
Therefore, a realistic discussion of the complexity of finding stationary 
point with smoothness of order $p\ge3$ must include additional 
restrictions on the algorithm class.

\subsection{Proof of Theorem~\ref{thm:fullder-simple}}

To prove Theorem~\ref{thm:fullder-simple}, we set up the hard instance $f:
\R^\T \to \R$ for some integer $T$ by appropriately scaling $f$ defined
in Eq.~\eqref{eq:fullder-fhard-def},
\begin{equation*}
  f(x) \defeq \frac{\Smp \sigma^{p+1}}{\smC{p}} \fhard(x/\sigma)\,,
\end{equation*}
for some scale parameter $\sigma > 0$ to be determined, where $\smC{p} \le
e^{2.5 p \log p + c_{1}}$ is as in
Lemma~\ref{lem:fullder-bounded}.\ref{item:fullder-lipschitz}. We wish to 
show $f$ satisfies Observation~\ref{obs:lower-bound-strategy}. 
Observation~\ref{obs:fhard-is-robust-zc} implies 
Observation~\ref{obs:lower-bound-strategy}.\ref{item:fbad-zero-chain} 
 ($f$ is a zero-chain).
Therefore it remains to show parts~\ref{item:fbad-function-class} 
and~\ref{item:large-grad-prop} of 
Observation~\ref{obs:lower-bound-strategy}. Consider any $x \in \R^{\T}$ 
such that 
$x_{\T} = 0$. Applying Lemma~\ref{lem:fullder-gradbound} guarantees that 
$\norm{\grad
	\fhard(x/\sigma)} 
> 1$, and therefore
\begin{equation}
  \label{eq:fullder-scaled-gradbound}
  \norm{\grad f(x)}
  = \frac{\Smp \sigma^{p}}{\smC{p}}
  \norm{\grad \fhard(x/\sigma)} > \frac{\Smp \sigma^{p}}{\smC{p}}.
\end{equation}

It remains to choose $\T$ and $\sigma$ based on $\epsilon$ such that
$\norm{\grad f(x)} > \epsilon$ and $f \in \Fclass{p}$. By the lower
bound~\eqref{eq:fullder-scaled-gradbound}, the choice $\sigma =
(\smC{p}\epsilon/\Smp )^{1/p}$ guarantees $\norm{\grad f(x)} >
\epsilon$. We note that $\deriv{p+1}f(x)
= (\Smp/\smC{p})\deriv{p+1}f(x/\sigma)$ and therefore by
Lemma~\ref{lem:fullder-bounded}.\ref{item:fullder-lipschitz} we have that
the $p$-th order derivatives of $f$ are $\Smp$-Lipschitz continuous. Thus,
to ensure $f \in \Fclass{p}$ it suffices to show that $f(0) - \inf_x f(x)
\le \DeltaF$. By the first part of Lemma~\ref{lem:fullder-bounded} we have
\begin{equation*}
  f(0) - \inf_x f(x) = 
  \frac{\Smp \sigma^{p+1}}{\smC{p}}(\fhard(0) - \inf_x \fhard(x))
  \le \frac{12\Smp \sigma^{p+1}}{\smC{p}}T
  = \frac{12 \smC{p}^{1/p}\epsilon^{\frac{1+p}{p}}}{\Smp^{1/p}}T,
\end{equation*}
where in the last transition we substituted $\sigma =
(\smC{p}\epsilon/\Smp )^{1/p}$. We conclude that $f \in \Fclass{p}$ and
$T =
  \floor{\frac{\DeltaF\Smp^{1/p}}{12 \smC{p}^{1/p}}
    \epsilon^{-\frac{1+p}{p}}}$ so by 
    Lemma~\ref{obs:lower-bound-strategy},
$  \CompEps{\AlgZR}{\Fclass{p}} \ge \CompEps{\AlgZR}{\{f\}}
  \ge 1 + T
  \ge \frac{\DeltaF\Smp^{1/p}}{12 \smC{p}^{1/p} \epsilon^{\frac{1+p}{p}}},
$ with $\smC{p}$ bounded from above as in 
Lemma~\ref{lem:fullder-bounded}.\ref{item:fullder-lipschitz}. By  
Proposition~\ref{prop:prelims-det-zr-dim}, this bound transfers to 
$\CompEps{\AlgDet}{\Fclass{p}}$, where functions of dimension $2T+1$ 
suffice to establish it.

\section{Lower bounds for randomized algorithms}\label{sec:fullder-random}

With our lower bounds on the complexity of deterministic algorithms
established, we turn to the class of all randomized algorithms. We provide strong \emph{distributional complexity} lower bounds by exhibiting a distribution on functions such that a function drawn from it is ``difficult'' for \emph{any} randomized algorithm, with high probability. We do this via the composition
of a random orthogonal transformation with the function $\fhard$ defined
in~\eqref{eq:fullder-fhard-def}.  

The key steps in our deterministic bounds are (a) to show that any algorithm
can ``discover'' at most one coordinate per iteration and (b) finding an
approximate stationary point requires ``discovering'' $\T$ coordinates. In
the context of randomized algorithms, we must elaborate this development in
two ways.  First, in Section~\ref{sec:fullder-random-bounded} we provide a
``robust'' analogue of Observation~\ref{obs:zero-chain} (step (a) above): we
show that for a random orthogonal matrix $U$, any sequence of \emph{bounded}
iterates $\{x\ind{t}\}_{t\in\N}$ based on derivatives of $\fhard(U^{\top} \cdot)$
must (with high probability) satisfy that $\absinner{x\ind{t}}{ u\ind{j}}
\le \half$ for all $t$ and $j\ge t$, so that by
Lemma~\ref{lem:fullder-gradbound}, $\norm{\grad \fhard(U^{\top}x\ind{t})}$ 
must
be large (step (b)). Second, in Section~\ref{sec:fullder-random-unbounded}
we further augment our construction to force boundedness of
the iterates by composing $\fhard(U^{\top} \cdot)$ with a soft projection, so
that an algorithm cannot ``cheat'' with unbounded iterates. Finally, we
present our general lower bounds in Section~\ref{sec:fullder-random-lb}.

\subsection{Random rotations and bounded iterates}
\label{sec:fullder-random-bounded}

To transform our hard instance~\eqref{eq:fullder-fhard-def} into a hard
instance distribution, we introduce an orthogonal matrix $U\in\R^{d\times
  \T}$ (with columns $u\ind{1},\ldots,u\ind{\T}$), and define
\begin{equation}
  \label{eq:fullder-rand-rot}
  \fhardRot(x) \defeq \fhard(U^{\top} x)
  = \fhard(\inner{u\ind{1}}{x}, \ldots, \inner{u\ind{\T}}{x}),
\end{equation}
We assume throughout that $U$ is chosen uniformly at
random from the space of orthogonal matrices $\orthogonalgroup(d,\T) = \{V \in
\R^{d \times \T} \mid V^{\top} V = I_\T\}$; unless otherwise stated, the 
probabilistic 
statements we
give are respect to this uniform $U$ in addition to
any randomness in the algorithm that produces the iterates.
With this definition, we have the following extension of
Observation~\ref{obs:zero-chain} to randomized iterates, which we prove for 
$\fhard$ but is valid for any robust zero-chain 
(Definition~\ref{def:robust-zero-chain}). Recall that a
sequence is \emph{informed by $f$} if it has the same distribution as
$\alg[f]$ for some randomized algorithm $f$ (with
iteration~\eqref{eq:prelims-randomized-alg}).  
\begin{restatable}{lemma}{lemFullderRandSlow}
  \label{lem:fullder-rand-slow}
  Let $\delta > 0$ and $R \ge \sqrt{\T}$, and let $x\ind{1}, \ldots,
  x\ind{T}$ be informed by $\fhardRot$ and bounded, so that
  $\norms{x\ind{t}} \le R$ for each $T$.  If $d \ge 52\T R^2 \log
  \frac{2\T^2}{\delta}$ then with probability at least $1-\delta$,
  for all $t \le \T$ and each $j \in \{t, \ldots, \T\}$, we have
  \begin{equation*}
    |\inner{u\ind{j}}{x\ind{t}}| < 1/2.
  \end{equation*}
\end{restatable}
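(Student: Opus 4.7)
The plan is to prove the lemma by induction on $t$ combined with a union bound and a sphere-concentration inequality. For each $t \in [\T]$, define the event
\[
E_t := \bigl\{|\inner{u\ind{j}}{x\ind{s}}| < 1/2 \text{ for all } s \le t \text{ and all } j \ge s\bigr\},
\]
so the target statement becomes $\P(E_\T) \ge 1-\delta$. The inductive idea is that $E_{t-1}$ implies $x\ind{t}$ is a measurable function of only $u\ind{1},\ldots,u\ind{t-1}$ (and the algorithm's randomness), so the fresh columns $u\ind{t},\ldots,u\ind{\T}$ are still uniformly distributed on an exponentially high-dimensional sphere and nearly orthogonal to $x\ind{t}$.

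First I would use the robust zero-chain property of $\fhard$ (Observation~\ref{obs:fhard-is-robust-zc} applied with $i = s$) to show that under $E_{t-1}$ and for each $s \le t-1$, the oracle response $\deriv{(0,\ldots,\infty)}\fhardRot(x\ind{s})$ depends on $U$ only through its first $s$ columns. Indeed, $|(U^\top x\ind{s})_j| = |\inner{u\ind{j}}{x\ind{s}}| < 1/2$ for all $j\ge s$, so there is a neighborhood of $U^\top x\ind{s}$ on which $\fhard(y) = \fhard(y_1,\ldots,y_s,0,\ldots,0)$; by the chain rule, every derivative of $\fhardRot$ at $x\ind{s}$ is then a combination of tensor products of $u\ind{1},\ldots,u\ind{s}$ with coefficients depending only on $\inner{u\ind{k}}{x\ind{s}}$ for $k\le s$. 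Following the template of~\citet{WoodworthSr16}, I would then construct a coupled process $\{\tilde x\ind{s}\}_{s\in\N}$ driven by the same algorithm $\alg$ and random seed $\xi$, but querying a modified oracle that always ``pretends'' the robust-zero-chain hypothesis is active at the current query. By induction on $s$, $\tilde x\ind{s}$ is $\sigma(u\ind{1},\ldots,u\ind{s-1},\xi)$-measurable, and $\tilde x\ind{s} = x\ind{s}$ for every $s\le t$ on $E_{t-1}$.

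The concentration step is now standard. Conditional on $u\ind{1},\ldots,u\ind{t-1}$ and $\xi$, the vector $\tilde x\ind{t}$ is deterministic with $\|\tilde x\ind{t}\|\le R$, while $u\ind{t},\ldots,u\ind{\T}$ are uniformly distributed on the Stiefel manifold of orthonormal $(\T-t+1)$-tuples in $V := \operatorname{span}(u\ind{1},\ldots,u\ind{t-1})^\perp$, a subspace of dimension $d-t+1$. For each $j\ge t$ the marginal of $u\ind{j}$ is uniform on the unit sphere of $V$; letting $y$ be the projection of $\tilde x\ind{t}$ onto $V$ (so $\|y\|\le R$ and $\inner{u\ind{j}}{\tilde x\ind{t}} = \inner{u\ind{j}}{y}$), the standard sphere-concentration bound $\P(|\inner{v}{y}|\ge 1/2) \le 2\exp(-\dim(V)/(8\|y\|^2))$ for $v$ uniform on the unit sphere of $V$ yields
\[
\P\!\left(|\inner{u\ind{j}}{\tilde x\ind{t}}| \ge \tfrac12 \;\Big|\; u\ind{1},\ldots,u\ind{t-1},\xi\right) \le 2\exp\!\left(-\frac{d-t+1}{8R^2}\right) \le \frac{\delta}{\T^2},
\]
where the last inequality uses $R^2\ge \T$ together with the hypothesis $d \ge 52\T R^2\log(2\T^2/\delta)$. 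A union bound over the at most $\T(\T+1)/2 \le \T^2$ pairs $(s,j)$ with $1\le s\le j\le \T$ produces the coupled analogue of $E_\T$ with probability at least $1-\delta$, and on that event $\tilde x\ind{s} = x\ind{s}$ for all $s\le \T$, giving $\P(E_\T)\ge 1-\delta$.

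The main obstacle is the coupling: the modified oracle in Step~2 must depend on strictly fewer columns of $U$ than the index of the query it answers (so that $u\ind{t},\ldots,u\ind{\T}$ remain genuinely fresh when we come to bound $\inner{u\ind{j}}{x\ind{t}}$), while simultaneously agreeing with the true oracle on the good event. Getting the indices to line up correctly---so that ``revealing one new direction per query'' is made rigorous in the presence of arbitrary measurable (and randomized) post-processing---is the only subtle part; once the coupling is in place, the remainder is routine sphere concentration and a geometric union bound, with $R\ge\sqrt{\T}$ and the factor $52$ providing slack to absorb $t\le\T$.
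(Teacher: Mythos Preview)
Your coupling route is correct in outline and is genuinely different from the paper's proof. The paper never introduces a separate process $\tilde x\ind{t}$; instead it works throughout with the real iterates and defines auxiliary events
\[
G_t=\Bigl\{\max_{j\ge t}\bigl|\bigl\langle u\ind{j},P_{t-1}^\perp x\ind{t}\bigr\rangle\bigr|\le \alpha\,\bigl\|P_{t-1}^\perp x\ind{t}\bigr\|\Bigr\},\qquad \alpha=\tfrac{1}{5R\sqrt{\T}},
\]
where $P_{t-1}$ projects onto $\mathrm{span}\{x\ind{1},u\ind{1},\ldots,x\ind{t-1},u\ind{t-1}\}$. This necessitates (i) a Gram--Schmidt induction (Lemma~\ref{lem:fullder-rand-slow-pu}) showing $G_{\le t}$ forces $|\langle u\ind{j},x\ind{s}\rangle|<1/2$, which is why $\alpha$ scales like $1/\sqrt{\T}$ and the factor $52\T$ enters the dimension bound; and (ii) a delicate rotational-invariance argument (Lemma~\ref{lem:fullder-rand-slow-uniform}) proving that, even after conditioning on $G_{<t}$ (which depends on $u\ind{t},\ldots,u\ind{\T}$), the normalized projection $P_{t-1}^\perp u\ind{j}/\|P_{t-1}^\perp u\ind{j}\|$ is still uniform. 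Your coupling sidesteps both: because $\tilde x\ind{t}$ is $\sigma(u\ind{1},\ldots,u\ind{t-1},\xi)$-measurable by construction, you never condition on a good event, the uniformity of $u\ind{j}$ given $U_{(<t)}$ is immediate, and you may use threshold $1/2$ directly. This is a real simplification.

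There is one gap. You assert ``$\|\tilde x\ind{t}\|\le R$'' for the coupled iterate, but the lemma only assumes $\|x\ind{t}\|\le R$ for the \emph{real} sequence; the algorithm $\alg$, fed your modified oracle, may produce arbitrarily large iterates, in which case the sphere-concentration bound gives nothing. The fix is standard but must be made explicit: build the coupled process with truncation, setting $\hat x\ind{t}=\tilde x\ind{t}$ when $\|\tilde x\ind{t}\|\le R$ and $\hat x\ind{t}=0$ otherwise, and feed $\hat x\ind{t}$ into the modified oracle. Then each $\hat x\ind{t}$ is $\sigma(u\ind{1},\ldots,u\ind{t-1},\xi)$-measurable and bounded, so your concentration and union bound go through for $\hat E_\T$. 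On $\hat E_\T$ the induction still closes: if $\hat x\ind{r}=x\ind{r}$ for $r<t$ then the modified and true oracles agree at those points, whence $\tilde x\ind{t}=x\ind{t}$, and the hypothesis $\|x\ind{t}\|\le R$ forces $\hat x\ind{t}=\tilde x\ind{t}=x\ind{t}$.
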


The result of Lemma~\ref{lem:fullder-rand-slow} is identical (to constant
factors) to an important result of \citet[Lemma~7]{WoodworthSr16}, but we
must be careful with the sequential conditioning of randomness between the
iterates $x\ind{t}$, the random orthogonal $U$, and how much information
the sequentially computed derivatives may leak. Because of this additional
care, we require a modification of their original proof,\footnote{ 
	In a recent note \citet{WoodworthSr17} independently provide a revision 
	of their proof that is similar, but not 
identical, to the 
one we propose here. 
} which we 
provide in Section~\ref{sec:fullder-rand-slow-proof}, giving a rough outline 
here. For
a fixed $t< \T$, assume that $|\inner{u\ind{j}}{x\ind{s}}| < 1/2$ holds for
every pair $s \le t$ and $j\in\{s,\ldots,\T\}$; we argue that this (roughly)
implies that $|\inner{u\ind{j}}{x\ind{t+1}}| < 1/2$ for every
$j\in\{t+1,\ldots,\T\}$ with high probability, completing the
induction. When the assumption that $|\inner{u\ind{j}}{x\ind{s}}| < 1/2$
holds, the robust zero-chain property of $\fhard$ (Definition~\ref{def:robust-zero-chain} and Observation~\ref{obs:fhard-is-robust-zc}) implies that for every $s\le t$ we have
\begin{equation*}
  \fhardRot(y) =
  \fhard(\inner{u\ind{1}}{y}, \ldots,
  \inner{u\ind{s}}{y}, 0, \ldots, 0)
\end{equation*}
for all $y$ in a neighborhood of $x\ind{s}$. That is, we can compute all the
derivatives of $\fhardRot$ at $x\ind{s}$ from $x\ind{s}$ and  $u\ind{1},\ldots,u\ind{s}$, as $\fhard$ is known. Therefore, given
$u\ind{1}, x\ind{1}, \ldots, u\ind{t}, x\ind{t}$ it is possible to
reconstruct all the information the algorithm has collected up to
iteration $t$. This means that beyond possibly revealing
$u\ind{1},\ldots,u\ind{t}$, these derivatives contain no additional
information on $u\ind{t+1},\ldots,u\ind{\T}$. Consequently, any component of
$x\ind{t+1}$ outside the span of
$u\ind{1},x\ind{1},\ldots,u\ind{t},x\ind{t}$ is a complete ``shot in the
dark.''

To give ``shot in the dark'' a more precise meaning, let $\hat{u}\ind{j}$ be
the projection of $u\ind{j}$ to the orthogonal complement of
$\mathrm{span}\{u\ind{1},x\ind{1},\ldots,u\ind{t},x\ind{t}\}$. We show that
conditioned on $u\ind{1},\ldots,u\ind{\T}$, and the induction hypothesis,
$\hat{u}\ind{j}$ has a rotationally symmetric distribution in that subspace,
and that it is independent of $x\ind{t+1}$. Therefore, by concentration of measure arguments on the sphere~\cite{Ball97}, we have
$\absinner{\hat{u}\ind{j}}{x\ind{t+1}} \lesssim \norms{x\ind{t + 1}} /
\sqrt{d} \le R / \sqrt{d}$ for any individual $j \ge t + 1$, with high
probability. Using an appropriate induction hypothesis, this is sufficient
to guarantee that for every $t+1 \le j \le \T$,
$|\inner{u\ind{j}}{x\ind{t+1}}| \lesssim R\sqrt{(\T\log\T)/d}$, which 
is bounded by $1/2$ for sufficiently large $d$.

\subsection{Handling unbounded iterates}\label{sec:fullder-random-unbounded}

In the deterministic case, the adversary (choosing the hard function $f$)
can choose the rotation matrix $U$ to be exactly orthogonal to all past
iterates; this is impossible for randomized algorithms.  The
construction~\eqref{eq:fullder-rand-rot} thus fails for unbounded random
iterates, since as long as $x\ind{t}$ and $u\ind{j}$ are not exactly orthogonal, their inner product will exceed $1/2$ for sufficiently large
$\norms{x\ind{t}}$, thus breaching the ``dead zone'' of $\compactfunc$ and
providing the algorithm with information on $u\ind{j}$. To prevent this, we
force the algorithm to only access $\fhardRot$ at points with bounded norm,
by first passing the iterates through a smooth mapping from $\R^d$ to a ball
around the origin. We denote our final hard instance construction by
$\fhardBound: \R^d \to \R$, and define it as
\begin{equation}\label{eq:fullder-fhardBound-def}
  \fhardBound(x) = \fhardRot(\rho(x)) + \frac{1}{10}\norm{x}^2,
  ~\mbox{where}~\rho(x) = \frac{x}{\sqrt{1+\norm{x}^2/R^2}}
  ~\mbox{and}~R=230\sqrt{T}\,.
\end{equation} 

The quadratic term in $\fhardBound$ guarantees that all points beyond a
certain norm have a large gradient, which prevents the algorithm from
trivially making the gradient small by increasing the norm of the
iterates. The following lemma captures the hardness of $\fhardBound$ for
randomized algorithms.

\begin{restatable}{lemma}{lemFullderRandHard}\label{lem:fullder-rand-hard}
  Let $\delta > 0$, and let $x\ind{1}, \ldots, x\ind{\T}$ be informed by
  $\fhardBound$. If $d \ge 52\cdot230^2\cdot \T^2 \log \frac{2\T^2}{\delta}$
  then, with probability at least $1-\delta$,
  \begin{equation*}
    \normbig{\grad \fhardBound
      (x\ind{t})} > 1/2
    ~~ \mbox{for all}~ t \le \T.
  \end{equation*}
\end{restatable}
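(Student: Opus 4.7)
The plan is to reduce the bound to an application of Lemma~\ref{lem:fullder-rand-slow} and then handle the effect of the soft projection $\rho$ and the quadratic term via a case analysis on $\|x\ind{t}\|$. Since $\fhardBound(x) = \fhardRot(\rho(x)) + \tfrac{1}{10}\|x\|^2$ and $\rho$ is a fixed function known to the algorithm, every derivative of $\fhardBound$ at $x$ can be reconstructed via the chain rule from the derivatives of $\fhardRot$ at $\rho(x)$; therefore the sequence $z\ind{t} := \rho(x\ind{t})$ is informed by $\fhardRot$, and by construction $\|z\ind{t}\| \le R = 230\sqrt{T}$ for every $t$. Applying Lemma~\ref{lem:fullder-rand-slow} to $\{z\ind{t}\}$ --- the dimension hypothesis $d \ge 52\cdot 230^2\, T^2 \log(2T^2/\delta)$ matches exactly --- yields, with probability at least $1-\delta$, the event $\mathcal{E}$ that $|\<u\ind{j}, z\ind{t}\>| < 1/2$ for all $t \le T$ and $j \in \{t,\ldots,T\}$.

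On $\mathcal{E}$, set $y\ind{t} := U^{\top} z\ind{t}$ and $w := U^{\top} x\ind{t}$. Since $|y\ind{t}_t| < 1/2 < 1$, Lemma~\ref{lem:fullder-gradbound} (applied with $i = t$) produces some $j_\star \le t$ with $|y\ind{t}_{j_\star}| < 1$ and $|\partial_{j_\star}\fhard(y\ind{t})| > 1$. Writing $\alpha := 1 + \|x\ind{t}\|^2/R^2$ and $s := \alpha^{-1/2}$, the relation $z\ind{t} = s\, x\ind{t}$ gives $y\ind{t} = s w$ and hence $|w_{j_\star}| < \sqrt{\alpha}$. A chain-rule computation yields
\begin{equation*}
\grad \fhardBound(x) \;=\; J(x)\, U g(x) + \tfrac{1}{5}\, x, \qquad J(x) \;=\; s\, I - \frac{x x^{\top}}{R^2 \alpha^{3/2}},
\end{equation*}
where $g(x) := \grad \fhard(U^{\top}\rho(x))$ is bounded in norm by $23\sqrt{T}$ thanks to Lemma~\ref{lem:fullder-bounded}.\ref{item:fullder-gradbound}, and the symmetric Jacobian $J$ satisfies $J(x)\, x = \alpha^{-3/2}\, x$.

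It remains to split on the magnitude of $\|x\ind{t}\|$. In the large-norm case $\|x\ind{t}\| \ge 115\sqrt{T} + 5/2$, projecting the gradient onto $x\ind{t}$ and applying Cauchy--Schwarz to $\langle U g, x\ind{t}\rangle$ gives $\|\grad\fhardBound(x\ind{t})\| \ge \|x\ind{t}\|/5 - 23\sqrt{T} > 1/2$ without invoking $\mathcal{E}$. Otherwise $\|x\ind{t}\| \le 115\sqrt{T} + 5/2$, so $\alpha \le 5/4$ and $s \ge 2/\sqrt{5}$; we instead project onto the ``undiscovered'' direction $u\ind{j_\star}$ and expand
\begin{equation*}
\<\grad \fhardBound(x\ind{t}), u\ind{j_\star}\>
\;=\; s\, g_{j_\star}
\;-\; \frac{w_{j_\star} \<U g, x\ind{t}\>}{R^2 \alpha^{3/2}}
\;+\; \tfrac{1}{5}\, w_{j_\star}.
\end{equation*}
The leading term has magnitude at least $s \ge 2/\sqrt{5}$, while the two ``coupling'' terms are controlled using $|w_{j_\star}| < \sqrt{\alpha} \le \sqrt{5/4}$, $|\<U g, x\ind{t}\>| \le \|g\| \|x\ind{t}\|$, and crucially the calibration $R = 230\sqrt{T}$, which renders $(R^2 \alpha^{3/2})^{-1} \|g\| \|x\ind{t}\| \lesssim (23)(116)/230^2 \approx 0.05$ uniformly in $T$. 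A short arithmetic check then gives $|\<\grad\fhardBound(x\ind{t}), u\ind{j_\star}\>| > 1/2$, and therefore $\|\grad\fhardBound(x\ind{t})\| > 1/2$.

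The main obstacle is the intermediate-norm regime of the second case: the regularization term $\tfrac{1}{5}\, x$ can in principle cancel $J U g$ exactly for specially chosen $x \in \mathrm{range}(U)$ anti-parallel to $U g$, so a naive triangle-inequality bound $\|J U g\| - \|x\|/5$ collapses across a broad band of $\|x\|$. The resolving idea is to avoid projecting onto $x\ind{t}$ or $U g$ and instead project onto $u\ind{j_\star}$, where cancellation is precluded: the event $\mathcal{E}$ (through $w_{j_\star} = y\ind{t}_{j_\star}/s$) forces $|w_{j_\star}|$ to be $O(1)$ rather than $O(\|x\ind{t}\|)$, and the choice $R = \Theta(\sqrt{T})$ makes the off-diagonal Jacobian coupling $w_{j_\star}\<Ug, x\ind{t}\>/(R^2\alpha^{3/2})$ a harmless numerical constant, letting the dominant $s\, g_{j_\star}$ term survive.
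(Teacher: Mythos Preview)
Your argument is correct and matches the paper's almost step for step: reduce to Lemma~\ref{lem:fullder-rand-slow} via $z\ind{t}=\rho(x\ind{t})$, extract $j_\star$ from Lemma~\ref{lem:fullder-gradbound}, and split on $\norms{x\ind{t}}$, projecting onto $u\ind{j_\star}$ in the small-norm regime (the paper uses the same Jacobian identity and the same three-term decomposition you wrote down). The only wrinkle is your case boundary: $\norms{x\ind{t}}\le 115\sqrt{T}+5/2$ does \emph{not} give $\alpha\le 5/4$ (that requires $\norms{x\ind{t}}\le R/2=115\sqrt{T}$); the paper splits at $R/2$ and compensates in the large-norm case by using $\opnorms{J}\le 2/\sqrt{5}$ rather than $\le 1$, though your final numerical bound survives regardless since there is ample slack.
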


\begin{proof}
  For $t\le \T$, set $y\ind{t} \defeq \rho(x\ind{t})$. For every $p\ge 0$
  and $t \in \N$, the quantity $\deriv{p} \fhardBound(x\ind{t})$ is
  measurable with respect $x\ind{t}$ and
  $\{\deriv{i}\fhardRot(y\ind{t})\}_{i=0}^p$ (the chain rule shows it can be
  computed from these variables without additional dependence on $U$, as
  $\rho$ is fixed). Therefore, the process $y\ind{1}, \ldots, y\ind{\T}$ is
  informed by $\fhardRot$ (recall defining
  iteration~\eqref{eq:prelims-randomized-alg}). Since $\norms{y\ind{t}} =
  \norms{\rho(x\ind{t})}\le R$ for every $t$, we may apply
  Lemma~\ref{lem:fullder-rand-slow} with $R=230\sqrt{T}$ to obtain that with
  probability at least $1-\delta$,
  \begin{equation*}
    \absinner{u\ind{\T}}{y\ind{t}} <
    1/2
    ~~ \mbox{for every } t \le \T.
  \end{equation*}
  Therefore, by Lemma~\ref{lem:fullder-gradbound} with $i = T$, for
  each $t$ there exists $j \le T$ such that
  \begin{equation}
    \label{eq:fullder-unbounded-uj}
    \left|\left\<u\ind{j}, y\ind{t}\right\>\right| < 1 ~\mbox{and}~ 
    \absinnerbig{u\ind{j}}{\grad \fhardRot( y\ind{t})} > 1.
  \end{equation}
  
  To show that $\norms{\grad \fhardBound( x\ind{t} )}$ is also large, we
  consider separately the cases $\norms{ x\ind{t} } \le R/2$ and
  $\norms{x\ind{t}} \ge R/2$. For the first case, we
  use $\frac{\del \rho}{\del x}(x) = \frac{I - \rho(x)
    \rho(x)^{\top}/R^2}{\sqrt{1+\norm{x}^2/R^2}}$ to write
  \begin{align*}
    \lefteqn{\innerbig{u\ind{j}}{\grad \fhardBound( x\ind{t})}  =
    \innerbig{u\ind{j}}{\frac{\del \rho}{\del x}(x\ind{t})
      \grad \fhardRot( y\ind{t})} + \frac{1}{5}\innerbig{u\ind{j}}{x\ind{t}}}
    \\ & \qquad  = 
    \frac{\inner{u\ind{j}}{\grad \fhardRot( y\ind{t})} -
      \inner{u\ind{j}}{y\ind{t}} \inner{y\ind{t}}{\grad \fhardRot( y\ind{t})}
      / R^2}{
      \sqrt{1+\norms{x\ind{t}}^2 / R^2}}
    +\frac{1}{5}\inner{u\ind{j}}{y\ind{t}}\sqrt{1+\norms{x\ind{t}}^2 / R^2}.
  \end{align*}
  Therefore, for $\norms{ y\ind{t}
  } \le \norms{ x\ind{t} } \le R/2$ we have
  \begin{equation*}
    \absinnerbig{u\ind{j}}{\grad \fhardBound( x\ind{t})} \ge 
    \frac{2}{\sqrt{5}} \absinnerbig{u\ind{j}}{\grad \fhardRot( y\ind{t})}
    - 
    \absinnerbig{u\ind{j}}{y\ind{t}}
    \left( \frac{\norms{\grad \fhardRot( y\ind{t})}}{2R} +
    \frac{1}{2\sqrt{5}} \right).
  \end{equation*}
  By Lemma~\ref{lem:fullder-bounded}.\ref{item:fullder-gradbound} we have
  $\norms{\grad \fhardRot( y\ind{t})} \le 23\sqrt{\T} = R/10$, which
  combined with~\eqref{eq:fullder-unbounded-uj} and the above display yields
  $\norms{\grad \fhardBound(x\ind{\T})} \ge \absinner{u\ind{j}}{\grad
    \fhardBound( x\ind{\T})} \ge \frac{2}{\sqrt{5}} - \frac{1}{20} -
  \frac{1}{2\sqrt{5}} > \frac{1}{2}$.
  
  In the second case, $\norm{x\ind{t}} \ge R/2$, we have for
  any $x$ satisfying $\norm{x} \ge R/2$ and $y = \rho(x)$ that
  \begin{equation}
    \label{eqn:when-x-big-so-is-fhard}
    \norm{\grad \fhardBound(x)} \ge
    \frac{1}{5} \norm{x}
    - \opnorm{\frac{\del \rho}{\del x}(x)}
    \norm{\grad \fhardRot(y)} \ge \frac{R}{10} -
    \frac{2}{\sqrt{5}}\frac{R}{10} > \sqrt{T} \ge 1,
  \end{equation}
  where we used $\opnorms{\frac{\del \rho}{\del x}(x)} \le
  \frac{1}{\sqrt{1+\norm{x}^2/R^2}}\le 2/\sqrt{5}$
  and that $\norms{\grad \fhardRot(y)} \le 23\sqrt{\T}=R/10$.
\end{proof}

As our lower bounds repose on appropriately scaling the function
$\fhardBound$, it remains to verify that $\fhardBound$ satisfies the few
boundedness properties we require. We do so in the following lemma.
\begin{restatable}{lemma}{lemFullderRandBoundedProps}
  \label{lem:fullder-rand-bounded-props}
  The function $\fhardBound$ satisfies the following.
  \begin{enumerate}[i.]
  \item \label{item:random-bounds-f-gap}
    We have $\fhardBound(0) - \inf_x \fhardBound(x) \le 12\T$.
  \item \label{item:random-bounds-lipschitz} For every $p \ge 1$, the $p$th
    order derivatives of $\fhardBound$ are $\smChat{p}$-Lipschitz
    continuous, where $\smChat{p} \le
    \exp(c p \log p + c )$ for a numerical constant $c<\infty$.
  \end{enumerate}
\end{restatable}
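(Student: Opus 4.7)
For part~\ref{item:random-bounds-f-gap}, since $\rho(0)=0$ and the quadratic term vanishes at the origin, $\fhardBound(0)=\fhard(0)$. For the infimum, $\frac{1}{10}\norm{x}^2\ge 0$ and $\fhardRot(\rho(x))=\fhard(U^\top\rho(x))\ge \inf_z\fhard(z)$ since $U^\top\rho(x)\in\R^\T$; combining with Lemma~\ref{lem:fullder-bounded}.\ref{item:fullder-funcbound} yields $\fhardBound(0)-\inf_x\fhardBound(x)\le\fhard(0)-\inf_z\fhard(z)\le 12\T$.

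For part~\ref{item:random-bounds-lipschitz}, the Lipschitz constant of $\deriv{p}\fhardBound$ equals $\sup_x\opnorm{\deriv{p+1}\fhardBound(x)}$. The quadratic term contributes at most $\frac{1}{5}$ to this (and zero for $p\ge 2$), so it suffices to bound $\opnorm{\deriv{p+1}\hat g(x)}$ uniformly in $x$, where $\hat g\defeq \fhardRot\circ\rho$. I plan to expand via Fa\`a di Bruno's formula on $\hat g(x)=\fhard(U^\top\rho(x))$:
\begin{equation*}
\deriv{p+1}\hat g(x)[v^{\otimes(p+1)}] = \sum_{\pi\in\mc{P}(p+1)} \deriv{|\pi|}\fhard(U^\top\rho(x))\bigg[\bigotimes_{B\in\pi}U^\top\deriv{|B|}\rho(x)[v^{\otimes|B|}]\bigg],
\end{equation*}
where $\mc{P}(p+1)$ is the set of partitions of $\{1,\ldots,p+1\}$, so that $|\mc{P}(p+1)|\le\exp(O(p\log p))$. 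Using $\opnorm{U}\le 1$, each summand's magnitude is bounded by $\opnorm{\deriv{|\pi|}\fhard(U^\top\rho(x))}\prod_{B\in\pi}\opnorm{\deriv{|B|}\rho(x)}$.

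Two a priori bounds suffice. First, writing $\rho(x)=R\tilde\rho(x/R)$ with $\tilde\rho(u)\defeq u/\sqrt{1+\norm{u}^2}$ gives $\opnorm{\deriv{k}\rho(x)}=R^{1-k}\opnorm{\deriv{k}\tilde\rho(x/R)}\le A_k R^{1-k}$ for a numerical sequence $A_k\le\exp(O(k\log k))$, which I plan to establish via explicit differentiation of $\tilde\rho$ using the Taylor coefficients of $(1+t)^{-1/2}$. Second, because each summand of $\fhard$ depends on at most two consecutive coordinates, for $k\ge 2$ and $\norm{v}=1$ I claim
\begin{equation*}
  \absinner{\deriv{k}\fhard(z)}{v^{\otimes k}} \le 2M_k\sum_{i=2}^\T(|v_{i-1}|+|v_i|)^k + eM_k \le 2^{k/2+2}M_k + eM_k,
\end{equation*}
using $(v_{i-1}^2+v_i^2)^{k/2}\le v_{i-1}^2+v_i^2$ (valid for $k\ge 2$ when $v_{i-1}^2+v_i^2\le\norm{v}^2=1$) and $\sum_i(v_{i-1}^2+v_i^2)\le 2\norm{v}^2$; here $M_k\defeq\max_{j\le k}\sup|\compactfunc^{(j)}|\cdot\sup|\gausscdf^{(k-j)}|\le\exp(O(k\log k))$ by Lemma~\ref{lem:fullder-props}.\ref{item:fullder-psiphi-props-infinite}. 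Thus $\opnorm{\deriv{k}\fhard(z)}\le\exp(O(k\log k))$ is $\T$-independent for $k\ge 2$, while for $k=1$ Lemma~\ref{lem:fullder-bounded}.\ref{item:fullder-gradbound} gives $\opnorm{\deriv{}\fhard(z)}\le 23\sqrt{\T}$.

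Combining the bounds, each term with $m\defeq|\pi|\ge 2$ is bounded by $e^{O(m\log m)}\cdot R^{m-(p+1)}\prod_{B\in\pi}A_{|B|}\le e^{O((p+1)\log(p+1))}$ (using $R\ge 1$ and $\sum_{B\in\pi}|B|=p+1$), while the unique $m=1$ term is bounded by $23\sqrt{\T}\cdot R^{-p}A_{p+1}= O(\T^{(1-p)/2})A_{p+1}\le A_{p+1}$ for $p\ge 1$, where the $R^{-p}$ factor from $\rho$'s derivatives absorbs the $\sqrt{\T}$-growth of $\deriv{}\fhard$. Summing over the $\le e^{O(p\log p)}$ partitions yields $\opnorm{\deriv{p+1}\hat g(x)}\le\exp(cp\log p+c)$, as claimed. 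The main obstacle is establishing the $\T$-independence of $\opnorm{\deriv{k}\fhard}$ for $k\ge 2$ despite $\fhard$ being a sum of $\T$ terms; this rests on its banded summand structure together with the superadditivity inequality $(a+b)^{k/2}\le a+b$ for $k\ge 2$ on $[0,1]$. The delicate balance between the unavoidable $\sqrt{\T}$-growth of $\deriv{}\fhard$ and the $R^{-p}$ decay of $\rho$'s higher derivatives explains (and requires) the choice $R=\Theta(\sqrt{\T})$ in the definition of $\fhardBound$.
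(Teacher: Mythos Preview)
Your proposal is correct and follows essentially the same route as the paper: both arguments use Fa\`a di Bruno to expand $\deriv{p+1}(\fhardRot\circ\rho)$, bound the $k$th derivative of $\rho$ by $e^{O(k\log k)}R^{1-k}$ via explicit computation, bound $\opnorm{\deriv{k}\fhard}$ by a $\T$-independent $e^{O(k\log k)}$ for $k\ge 2$ (you reprove this directly via the banded structure, whereas the paper invokes Lemma~\ref{lem:fullder-bounded}.\ref{item:fullder-lipschitz}; the underlying argument is the same), and handle the single $|\pi|=1$ term by pairing the $23\sqrt{\T}$ gradient bound with the $R^{-p}$ decay. The only place you are sketchier than the paper is the derivative bound on $\rho$, where the paper carries out the Fa\`a di Bruno computation on $\lambda(\xi)=\sqrt{1+\norm{\xi}^2}$ in full; your plan to do this via the Taylor coefficients of $(1+t)^{-1/2}$ is equivalent.
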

\noindent
We defer the (computationally involved) proof of this lemma to
Section~\ref{sec:fullder-rand-bounded-props-proof}.

\subsection{Final lower bounds}\label{sec:fullder-random-lb}

With Lemmas~\ref{lem:fullder-rand-hard}
and~\ref{lem:fullder-rand-bounded-props} in hand, we can state our lower
bound for all algorithms, randomized or otherwise, given access to all
derivatives of a $\mc{C}^\infty$ function.  Note that our construction also
implies an identical lower bound for (slightly) more general algorithms that
use any \emph{local oracle}~\cite{NemirovskiYu83,BraunGuPo17}, meaning 
that the information the oracle
returns about
a function $f$ when queried at a point $x$ is identical to that it returns
when a function $g$ is queried at $x$ whenever $f(z) = g(z)$ for all $z$ in a 
neighborhood of  $x$.
\begin{restatable}{theorem}{thmFullderFinal}\label{thm:fullder-final}
  There exist numerical constants $0 < c_0, c_1 < \infty$
  such that the following lower bound holds. Let $p \ge 1, p \in \N$,
  and let $\DeltaF$, $\Smp$, and $\epsilon$ be positive.
  Then
  \begin{equation*}
    \CompEps{\AlgRand}{\Fclass{p}}
    \ge c_0 \cdot \DeltaF \left(\frac{\Smp}{\smChat{p}}\right)^{1/p}
    \epsilon^{-\frac{1+p}{p}},
  \end{equation*}
  where $\smChat{p} \le e^{c_1 p \log p + c_1}$.
  The lower bound 
  holds even if we restrict $\Fclass{p}$ to functions where the domain has 
  dimension $1 + c_2 q\Big(\DeltaF \left({\Smp}/{\smC{p}}\right)^{1/p} 
  	\epsilon^{-\frac{1+p}{p}}\Big)$ with $c_2$ a numerical constant 
  	and $q(x) = x^2 \log(2x)$.
\end{restatable}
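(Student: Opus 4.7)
The plan is to mirror the deterministic proof of Theorem~\ref{thm:fullder-simple}, replacing the combination of Observation~\ref{obs:zero-chain} and Lemma~\ref{lem:fullder-gradbound} with the single randomized hardness statement provided by Lemma~\ref{lem:fullder-rand-hard}, and swapping $\fhard$ for its bounded-iterate variant $\fhardBound$ from~\eqref{eq:fullder-fhardBound-def}. For an integer $\T$ and scale $\sigma>0$ to be chosen, I would define the candidate hard function
\begin{equation*}
f(x) \defeq \frac{\Smp \sigma^{p+1}}{\smChat{p}} \fhardBound(x/\sigma),
\end{equation*}
with $\fhardBound : \R^d \to \R$ built using parameter $\T$ and $R = 230\sqrt{\T}$. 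By the chain rule and Lemma~\ref{lem:fullder-rand-bounded-props}.\ref{item:random-bounds-lipschitz}, the $p$th derivative of $f$ is $\Smp$-Lipschitz. Moreover, since the map $\fhardBound \mapsto f$ is a deterministic rescaling, any $\alg \in \AlgRand$ operating on $f$ induces, via $y\ind{t} \defeq x\ind{t}/\sigma$, a sequence informed by $\fhardBound$ in the sense of~\eqref{eq:prelims-randomized-alg}.

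Next, I would tune $\sigma$ and $\T$ to satisfy the two constraints of being $\epsilon$-hard and lying in $\Fclass{p}$. By Lemma~\ref{lem:fullder-rand-bounded-props}.\ref{item:random-bounds-f-gap},
\begin{equation*}
f(0) - \inf_x f(x) \le \frac{12\,\Smp \sigma^{p+1}}{\smChat{p}} \T.
\end{equation*}
A gradient of $\fhardBound$ exceeding $1/2$ translates to a gradient of $f$ exceeding $\Smp \sigma^p / (2 \smChat{p})$, so choosing $\sigma = (2\smChat{p}\epsilon/\Smp)^{1/p}$ ensures the scaled gradient threshold equals $\epsilon$. Substituting this $\sigma$ into the function-value bound, the requirement $f(0) - \inf f \le \DeltaF$ becomes $\T \lesssim \DeltaF (\Smp/\smChat{p})^{1/p} \epsilon^{-(p+1)/p}$, so I would set $\T = \lfloor c_0 \DeltaF (\Smp/\smChat{p})^{1/p} \epsilon^{-(p+1)/p}\rfloor$ for an appropriate numerical $c_0$; this guarantees $f \in \Fclass{p}$.

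The final step is to invoke Lemma~\ref{lem:fullder-rand-hard} with $\delta = 1/3$ on the sequence $\{y\ind{t}\}$, which requires the ambient dimension to satisfy $d \ge 52 \cdot 230^2 \cdot \T^2 \log(6\T^2)$, that is $d = O(\T^2 \log \T) = O(q(\T))$ for $q(x) = x^2 \log(2x)$. The lemma then yields, with probability at least $2/3$,
\begin{equation*}
\normbig{\grad f(x\ind{t})} = \frac{\Smp \sigma^p}{\smChat{p}} \normbig{\grad \fhardBound(y\ind{t})} > \epsilon \quad \text{for all } t \le \T.
\end{equation*}
Since $2/3 > 1/2$, the defining inequality in~\eqref{eq:prelims-time-eps} fails at $t = \T$, whence $\TimeEps{\alg}{f} > \T$ for the chosen $\alg$. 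As $\alg \in \AlgRand$ was arbitrary and $f \in \Fclass{p}$, this produces the claimed bound on $\CompEps{\AlgRand}{\Fclass{p}}$, and the dimension bound follows by substituting $\T \asymp \DeltaF(\Smp/\smChat{p})^{1/p}\epsilon^{-(p+1)/p}$ into $q(\T)$.

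The substantive work sits entirely in the supporting lemmas: Lemma~\ref{lem:fullder-rand-hard} already absorbs the delicate sequential-conditioning concentration argument that converts the robust zero-chain structure of $\fhard$ into a high-probability ``shot-in-the-dark'' bound on $\absinner{u\ind{j}}{x\ind{t}}$ under a uniformly random rotation, while Lemma~\ref{lem:fullder-rand-bounded-props} handles the compositional smoothness of $\fhardBound$ through $\rho$. Given these two ingredients, the theorem reduces to the scaling bookkeeping above; the only remaining care point is verifying that the probability margin $1 - \delta = 2/3$ strictly exceeds the $1/2$ threshold in the complexity definition, which motivates the specific choice $\delta = 1/3$ (any $\delta < 1/2$ would do, at the cost of constants absorbed into $c_0$ and into the dimension requirement).
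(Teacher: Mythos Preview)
Your approach is essentially identical to the paper's: define $f_U(x) = \frac{\Smp\sigma^{p+1}}{\smChat{p}}\fhardBound(x/\sigma)$, observe that $x\ind{t}/\sigma$ is informed by $\fhardBound$, invoke Lemma~\ref{lem:fullder-rand-hard}, and choose $\sigma = (2\smChat{p}\epsilon/\Smp)^{1/p}$ and $\T$ via Lemma~\ref{lem:fullder-rand-bounded-props}.\ref{item:random-bounds-f-gap}. The paper uses $\delta = 1/2$ rather than your $\delta = 1/3$, but this is immaterial.

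There is one small slip in your final step. The probability $\ge 2/3$ delivered by Lemma~\ref{lem:fullder-rand-hard} is over \emph{both} the random rotation $U$ and the algorithm's internal randomness $\xi$, whereas the quantity $\TimeEps{\alg}{f}$ in~\eqref{eq:prelims-time-eps} is defined for a \emph{fixed} function $f$ with probability taken over $\xi$ alone. You therefore cannot plug the joint probability directly into~\eqref{eq:prelims-time-eps}; you need one extra averaging line: since $\E_U\big[\P_\xi(\norms{\grad f_U(x\ind{t})}>\epsilon~\forall t\le\T \mid U)\big] \ge 2/3$, there exists a specific $U = U_\alg$ for which the inner conditional probability exceeds $1/2$, and for that particular $f_{U_\alg}\in\Fclass{p}$ one obtains $\TimeEps{\alg}{f_{U_\alg}} > \T$. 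The paper makes this step explicit via the distributional inequalities~\eqref{eq:fullder-rand-prob-lower-bound}--\eqref{eq:fullder-rand-T-lower-bound}.
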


We return to the proof of Theorem~\ref{thm:fullder-final} in
Sec.~\ref{sec:fullder-final-proof}, following the same outline as that
of Theorem~\ref{thm:fullder-simple}, and provide some commentary here.  An
inspection of the proof to come shows that we actually demonstrate a
stronger result than that claimed in the theorem. For any $\delta\in(0,1)$
let $d \ge \ceil{52 \cdot (230)^2 \cdot \T^2 \log(2 \T^2/\delta)}$ where $\T =
\lfloor{c_0 \DeltaF ({\Smp}/{\smChat{p}})^{1/p}
  \epsilon^{-\frac{1+p}{p}}}\rfloor$ as in the claimed lower bound. In the
proof we construct a probability measure $\mu$ on functions in $\Fclass{p}$, of
fixed dimension $d$, such that
\begin{equation}
  \label{eq:fullder-rand-prob-lower-bound}
  \inf_{\alg \in \AlgRand}
  \int \P_\alg\left(
  \normbig{\nabla f(x\ind{t})} > \epsilon
  ~ \mbox{for~all~} t \le \T \mid f \right) d\mu(f)
  > 1-\delta,
\end{equation}
where the randomness in $\P_\alg$ depends only on $\alg$.  Therefore, by
definition~\eqref{eq:prelims-time-eps}, for \emph{any} $\alg\in\AlgRand$ a
function $f$ drawn from $\mu$ satisfies
\begin{equation}
  \label{eq:fullder-rand-T-lower-bound}
  \TimeEps{\alg}{f} > \T
  ~\mbox{with probability greater than }1-2\delta,
\end{equation} 
implying
Theorem~\ref{thm:fullder-final} for any $\delta \ge 1/2$. Thus, we exhibit a
randomized procedure for finding hard instances for any randomized
algorithm that requires no knowledge of the algorithm itself.

Theorem~\ref{thm:fullder-final} is stronger than
Theorem~\ref{thm:fullder-simple} in that it applies to the broad class of
all randomized algorithms. Our probabilistic analysis requires that the  
functions
constructed to prove Theorem~\ref{thm:fullder-final} have dimension scaling proportional to $\T^2 \log(\T)$ where $\T$ is the lower bound on the number of iterations. Contrast this to Theorem~\ref{thm:fullder-simple}, which
only requires dimension $2\T + 1$. A similar gap exists in complexity results for convex 
optimization~\cite{WoodworthSr16,WoodworthSr17}. At present, it unclear if these gaps are fundamental or a consequence of our specific constructions.

\subsection{Proof of Theorem~\ref{thm:fullder-final}}
\label{sec:fullder-final-proof}

We set up our hard instance distribution $f_U: \R^d \to \R$, indexed by a
uniformly distributed orthogonal matrix $U\in\orthogonalgroup(d,\T)$, by
appropriately scaling $\fhardBound$ defined
in~\eqref{eq:fullder-fhardBound-def},
\begin{equation*}
  f_U(x) \defeq \frac{\Smp \sigma^{p+1}}{\smChat{p}} \fhardBound(x/\sigma),
\end{equation*}
where the integer $\T$ and scale parameter $\sigma > 0$ are to be
determined, $d=\lceil{52\cdot (230)^2 T^2 \log(4T^2)}\rceil$, and the
quantity $\smChat{p} \le \exp(c_1 p \log p + c_1)$ for a numerical constant $c_1$
is defined in
Lemma~\ref{lem:fullder-rand-bounded-props}.\ref{item:random-bounds-lipschitz}.

Fix $\alg\in\AlgRand$ and let $x\ind{1},x\ind{2},\ldots,x\ind{T}$ be the
iterates produced by $\alg$ applied on $f_U$. Since $f$ and $\fhardBound$
differ only by scaling, the iterates
$x\ind{1}/\sigma,x\ind{2}/\sigma,\ldots,x\ind{T}/\sigma$ are informed by
$\fhardBound$ (recall Sec.~\ref{sec:prelims-algs}), and therefore we may
apply Lemma~\ref{lem:fullder-rand-hard} with $\delta = 1/2$ and our large
enough choice of dimension $d$ to conclude that
\begin{equation*}
  \P_{\alg,U}\left( \normbig{\grad \fhardBound\left( x\ind{t}/\sigma \right)}
  > \frac{1}{2}
  ~ \mbox{for~all}~ t \le \T\right) > \frac{1}{2},
\end{equation*}
where the probability is taken over both the random
orthogonal $U$ and any randomness in $\alg$.
As $\alg$ is arbitrary, 
taking $\sigma = (2\smChat{p}\epsilon/\Smp )^{1/p}$, this inequality becomes
the desired strong inequality~\eqref{eq:fullder-rand-prob-lower-bound} with $\delta=1/2$ and $\mu$ induced by the distribution of $U$. Thus, by~\eqref{eq:fullder-rand-T-lower-bound}, for every $\alg\in\AlgRand$ there exists $U_\alg\in\orthogonalgroup(d,\T)$ such that $\TimeEps{\alg}{f_{U_\alg}} \ge 1+T$, so
\begin{equation*}
	\inf_{\alg\in\AlgDet}\sup_{U\in\orthogonalgroup(d,\T)}\TimeEps{\alg}{f_U} \ge 1+T.
\end{equation*}

It remains to choose $T$ to guarantee that $f_U$ belongs to the relevant
function class (bounded and smooth) for every orthogonal $U$. By
Lemma~\ref{lem:fullder-rand-bounded-props}.\ref{item:random-bounds-lipschitz},
$f_U$ has $\Smp$-Lipschitz continuous $p$th order derivatives. By
Lemma~\ref{lem:fullder-rand-bounded-props}.\ref{item:random-bounds-f-gap},
we have
\begin{equation*}
  f_U(0) - \inf_x f_U(x)
  \le \frac{\Smp \sigma^{p+1}}{\smChat{p}}
  \left(\fhard(0) - \inf_x \fhard(x)\right)
  \le \frac{12\Smp \sigma^{p+1}}{\smChat{p}} T =
  \frac{24 (2\smChat{p})^{1/p} \epsilon^{\frac{p+1}{p}}}{\Smp^{1/p}} T,
\end{equation*}
where in the last transition we have substituted $\sigma =
(2\smC{p}\epsilon/\Smp)^{1/p}$.
Setting
$\T = \lfloor{\frac{\DeltaF}{48} ({\Smp}/{\smChat{p}})^{1/p} \epsilon^{-\frac{1+p}{p}}}\rfloor$
gives $f_U(0) - \inf_x f_U(x) \le \Delta$, and
$f_U \in \Fclass{p}$, yielding the theorem.
\section{Distance-based lower bounds}\label{sec:fullder-distance}

We have so far considered finding approximate stationary points of smooth
functions with bounded sub-optimality at the origin, \ie $f(0)-\inf_x f(x)
\le \DeltaF$. In convex optimization, it is common to consider instead
functions with bounded distance between the origin and a global minimum. 
We may consider a similar restriction for non-convex functions; for $p\ge 1$ 
and positive $\Smp,
D$, let
\begin{equation*}
\FclassD{p}
\end{equation*}
be the class of $\mc{C}^\infty$ functions with $\Smp$-Lipschitz $p$th order
derivatives satisfying 
\begin{equation}
  \label{eq:fullder-distance-def}
  \sup_x \left\{\norm{x} \mid x \in \argmin f \right\} \le D,
\end{equation}
that is, all global minima have bounded distance to the origin.

In this section we give a lower bound on the complexity of this function class 
that
has the same $\epsilon$ dependence as our bound for the class
$\Fclass{p}$. This is in sharp contrast to convex optimization, where
distance-bounded functions enjoy significantly better $\epsilon$ dependence
than their value-bounded counterparts (see
Section~\ref*{sec:convex} in the companion~\cite{NclbPartII}). 
Qualitatively, the reason for this
difference is that the lack of convexity allows us to ``hide'' global minima
close to the origin that are difficult to find for any algorithm with local
function access~\cite{NemirovskiYu83}.

We postpone the construction and proof to
Appendix~\ref{sec:proof-general-distance}, and move directly to the final bound.

\begin{restatable}{theorem}{thmFullderDistLB}\label{thm:fullder-final-dist}
  There exist numerical constants $0 < c_0, c_1 < \infty$ such that the
  following lower bound holds. For any $p\ge 1$, let $D, \Smp$, and
  $\epsilon$ be positive. Then
  \begin{equation*}
    \CompEps{\AlgRand}{\FclassD{p}} \ge 
    c_0 \cdot D^{1+p} \left( \frac{\Smp}{\smC{p}'}\right)^{\frac{1+p}{p}}
    \epsilon^{-\frac{1+p}{p}},
  \end{equation*}
  where $\smC{p}' \le e^{c_1 p \log p + c_1}$. 
  The lower bound 
  holds even if we restrict $\FclassD{p}$ to functions with domain of 
  dimension $1 + c_2 q\Big(D^{1+p} \left( 
  {\Smp}/{\smC{p}'}\right)^{\frac{1+p}{p}}
  \epsilon^{-\frac{1+p}{p}}\Big)$, for a some numerical constant 
  $c_2<\infty$ 
  and $q(x) = x^2 \log(2x)$.
\end{restatable}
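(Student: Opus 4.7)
My proof plan parallels that of Theorem~\ref{thm:fullder-final}: construct a hard instance that is a robust zero-chain, apply a uniformly random orthogonal rotation, compose with a boundedness-enforcing projection, and optimize scaling parameters. The one essential adjustment is to replace the quadratic confining term $\|x\|^2/10$ of $\fhardBound$ with a higher-order confining term, so that the hard instance tightly controls the distance of its global minimum from the origin: a plain rescaling of $\fhardBound$ yields only $\|x^*\|=O(\sigma\sqrt{T})$ and hence the weaker lower bound $T\sim D^{2}(\Smp/\epsilon)^{2/p}$, whereas a steeper confinement admits a smaller $\|x^*\|$ at fixed $T$.

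Concretely, define a modified hard instance
\begin{equation*}
\fhardConv(x) = \fhardRot(\rho(x)) + \frac{\alpha}{(p+1)!}\|x\|^{p+1}
\end{equation*}
(with a smooth mollification of $\|x\|^{p+1}$ near $0$ if needed to preserve $C^\infty$ regularity), where $\rho$ is the soft projection of~\eqref{eq:fullder-fhardBound-def}, $U$ is a uniformly random column-orthogonal $d\times T$ matrix, and $\alpha>0$ is a confinement parameter. The key properties I would verify are: $\fhardConv$ is a robust zero-chain (inherited from $\fhard$ since the added term is radially symmetric and coordinate-invariant); its $p$-th derivative is Lipschitz with constant $\smChat{p}+c_p\alpha$ for a numerical $c_p$; its global minimum satisfies $\|x^*\|=O((\sqrt{T}/\alpha)^{1/p})$ by balancing $\|\grad \fhardRot(\rho(x))\|\le 23\sqrt{T}$ from Lemma~\ref{lem:fullder-bounded}.\ref{item:fullder-gradbound} against the radial confining gradient $\alpha\|x\|^p/p!$; and an analogue of Lemma~\ref{lem:fullder-rand-hard} holds, giving $\|\grad \fhardConv(x\ind{t})\|\ge 1/2$ for all $t\le T$ with high probability.

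Following Section~\ref{sec:fullder-final-proof}, consider the scaled function $f_U(x)=\lambda\fhardConv(x/\sigma)$ and choose $\lambda,\sigma,\alpha$ to meet the constraints: $\Smp$-Lipschitz $p$-th derivative ($\lambda(\smChat{p}+c_p\alpha)/\sigma^{p+1}\le\Smp$), distance to minimum bounded by $D$ ($\sigma(\sqrt{T}/\alpha)^{1/p}\le D$), gradient at the first $T$ iterates exceeding $\epsilon$ ($\lambda/\sigma\ge 2\epsilon$), and ambient dimension sufficient for the random rotation argument ($d\gtrsim T^2\log T$). Solving these in the free parameters---and absorbing $p$-dependent factors into $\smC{p}'$---yields the desired $T\gtrsim D^{p+1}(\Smp/\smC{p}')^{(p+1)/p}\epsilon^{-(p+1)/p}$, and the randomized lower bound machinery from Section~\ref{sec:fullder-random-lb} converts this into the complexity bound of the theorem.

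The main technical obstacle is the gradient lower bound for $\fhardConv$, since the confining gradient $\alpha\|x\|^{p-1}x/p!$ may partially cancel the chain gradient along the random direction $u\ind{j}$ identified by Lemma~\ref{lem:fullder-gradbound}. I would address this via a two-regime argument mirroring the $\|x\|\le R/2$ vs.\ $\|x\|\ge R/2$ split in the proof of Lemma~\ref{lem:fullder-rand-hard}: for iterates of small norm the chain contribution along $u\ind{j}$ is $\Omega(1)$ while the confining contribution is $O(\alpha\|x\|^{p-1}|\inner{u\ind{j}}{x}|)=o(1)$ using $|\inner{u\ind{j}}{x}|<1/2$ from Lemma~\ref{lem:fullder-rand-slow}; for iterates of large norm the radial confining gradient itself has magnitude $\alpha\|x\|^p\gtrsim\sqrt{T}$ and dominates. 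Calibrating $\alpha$ and the boundary between the two regimes so that neither contribution is swamped is the central quantitative step.
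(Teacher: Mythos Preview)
Your two-regime argument for the gradient lower bound has a genuine gap when $p\ge 2$. The ``small norm'' regime in the proof of Lemma~\ref{lem:fullder-rand-hard} is $\|x\|\le R/2=115\sqrt{T}$, not $\|x\|=O(1)$: this is the regime in which the soft projection $\rho$ is close to the identity and the directional argument along $u\ind{j}$ goes through. But in that regime the contribution of your confining term along $u\ind{j}$ is $\frac{\alpha}{p!}\|x\|^{p-1}|\langle u\ind{j},x\rangle|$, and while Lemma~\ref{lem:fullder-rand-slow} controls $|\langle u\ind{j},\rho(x)\rangle|$ (hence $|\langle u\ind{j},x\rangle|\lesssim 1$), it says nothing about $\|x\|^{p-1}$, which can be as large as $(R/2)^{p-1}\sim T^{(p-1)/2}$. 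For this contribution to be $O(1)$ you are forced to take $\alpha\lesssim T^{-(p-1)/2}$; but then your own bound on the global minimizer becomes $\|x^\star\|\sim(\sqrt{T}/\alpha)^{1/p}\sim\sqrt{T}$, identical to what the quadratic confinement already gives. You cannot close the gap by shifting the regime boundary either: requiring the radial term to dominate $\|\nabla\fhardRot(\rho(x))\|\le 23\sqrt{T}$ in the ``large norm'' regime forces the boundary to sit at $\|x\|\gtrsim(\sqrt{T}/\alpha)^{1/p}$, and matching this to the small-norm constraint again yields $\alpha\sim T^{-(p-1)/2}$. The upshot is that steeper confinement buys nothing, and your construction only recovers the weaker bound $T\sim D^{2}(\Smp/\epsilon)^{2/p}$ that you yourself flagged as insufficient.

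The paper takes a completely different route: it keeps $\fhardBound$ (with its quadratic term) unchanged and instead \emph{subtracts} a smooth bump $\hhard(U^\top x/D)$ centered at $\tfrac{4}{5}D\,u\ind{T}$ and supported in $\{x:\|x\|\le D\}$. Because $\hhard$ vanishes whenever $|\langle u\ind{T},x\rangle|<\tfrac{3}{5}D$, the algorithm---which by Lemma~\ref{lem:fullder-rand-slow} cannot make $|\langle u\ind{T},\rho(x\ind{t})\rangle|$ large---never sees the bump, so the gradient lower bound is inherited verbatim from Lemma~\ref{lem:fullder-rand-hard}. Meanwhile the bump is deep enough (depth $\sim D^{p+1}$) to force every global minimizer into its support, hence within distance $D$ of the origin, \emph{regardless} of where the minimizer of $\fhardBound$ sits. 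The distance constraint then becomes $12\sigma^{p+1}T\lesssim D^{p+1}$ rather than $\sigma\sqrt{T}\lesssim D$, which is exactly what produces the exponent $D^{p+1}\epsilon^{-(p+1)/p}$.
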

\noindent
We remark that a lower-dimensional construction suffices for proving the 
lower bound for deterministic algorithm, similarly to 
Theorem~\ref{thm:fullder-simple}.

While we do not have a matching upper bound for 
Theorem~\ref{thm:fullder-final-dist}, we can match its $\epsilon$ dependence 
in the smaller function class
\begin{equation*}
\FclassDTwo{1}{p} = \FclassD{1} \cap \FclassD{p},
\end{equation*} 
due to the fact that for any $f:\R^d\to\R$ with $\SmGrad$-Lipschitz 
continuous gradient and global minimizer $x\opt$, we have $f(x)-f(x\opt)\le 
\half\SmGrad\norm{x-x\opt}^2$ for all $x\in\R^d$~\cite[cf.][Eq.\  
(9.13)]{BoydVa04}. Hence $\FclassDTwo{1}{p} \subset \Fclass{p}$, with 
$\DeltaF\defeq \half\SmGrad D^2$, and consequently by the 
bound~\eqref{eq:pth-order-reg-ub} we have
\begin{equation*}
\CompEps{\AlgDet\ind{p}\cap\AlgZR\ind{p}}{\FclassDTwo{1}{p}} \lesssim
 D^2 \SmGrad  \Smp^{1/p} \epsilon^{-\frac{p+1}{p}}.
\end{equation*}
\section{Conclusion}

This work provides the first algorithm independent and tight lower bounds 
on the dimension-free complexity of finding stationary points. As a 
consequence, we have characterized the optimal rates of convergence to 
$\epsilon$-stationarity, under the assumption of high dimension and an 
oracle that provides all derivatives. Yet, given the importance of 
high-dimensional problems, the picture is incomplete: high-order 
algorithms---even second-order method---are often impractical in large 
scale settings. We address this in the companion~\cite{NclbPartII}, which 
provides sharper lower bounds for the more restricted class of first-order 
methods. In~\cite{NclbPartII} we also provide a full conclusion for this 
paper sequence, discussing in depth the implications and questions that 
arise from our results.

\arxiv{
\section*{Acknowledgments}
OH was supported by the PACCAR INC fellowship. YC and JCD were partially
supported by the SAIL-Toyota Center for AI Research, NSF-CAREER award
1553086, and a Sloan Foundation Fellowship in Mathematics. YC was partially
supported by the Stanford Graduate Fellowship and the Numerical Technologies
Fellowship.
}

\arxiv{\bibliographystyle{abbrvnat}}
\mathprog{\bibliographystyle{abbrvnat}} 

\newpage
\appendix
\section{Proof of Propositions~\ref{prop:prelims-det-zr} 
and~\ref{prop:prelims-det-zr-dim}}\label{sec:prelims-proofs}

The core of the proofs of Propositions~\ref{prop:prelims-det-zr} 
and~\ref{prop:prelims-det-zr-dim} is the following construction.

\begin{lemma}\label{lem:prelims-app-det-zr-core}
	Let $p\in\N\cup\{\infty\}$, $T_0\in\N$ and $\alg\in\AlgDet\ind{p}$. 
	There exists an 
	algorithm $\algzr\in\AlgZR\ind{p}$ with the following property. For 
	every 
	$f:\R^d\to\R$ there exists an orthogonal matrix 
	$U\in\R^{(d+T_0)\times d}$ such that, for every $\epsilon > 0$,
	\begin{equation*}
	  \TimeEps{\alg}{f_U} > T_0
	  ~~\mbox{or}~~
	  \TimeEps{\alg}{f_U} = \TimeEps{\algzr}{f},
	\end{equation*}
	 where $f_U(x) \defeq f(U^\top x)$. 
\end{lemma}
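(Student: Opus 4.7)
The plan is to construct $\algzr$ explicitly as an algorithm that internally simulates $\alg$ while adaptively building $U$ column by column in response to the derivatives of $f$ that $\algzr$ observes. At each step $t$, $\algzr$ will maintain an ``active'' index set $C_t \subseteq [d]$, representing columns of $U$ already fixed, together with a partial specification of $U$: the columns $u\ind{k}$ for $k \in C_t$ will be committed orthonormal vectors in $\R^{d+T_0}$, while the remaining columns will be constrained (for now) to lie in the orthogonal complement of $\text{span}\{u\ind{j} : j \in C_t\}$ and of the simulated iterates $x\ind{1}, \ldots, x\ind{t}$. Starting from $C_0 = \emptyset$, step $t \le T_0$ will proceed as follows: (i) use past responses $r\ind{1}, \ldots, r\ind{t-1}$ to compute $x\ind{t} = \alg\ind{t}(r\ind{1}, \ldots, r\ind{t-1})$; (ii) strengthen the ongoing constraint so that every yet-to-be-fixed column is also orthogonal to $x\ind{t}$, forcing $(y\ind{t})_k := \innerbig{u\ind{k}}{x\ind{t}} = 0$ for $k \notin C_{t-1}$ and hence $\support{y\ind{t}} \subseteq C_{t-1}$; (iii) output $y\ind{t}$ as $\algzr$'s $t$-th iterate, query $f$ to obtain $\deriv{(0,\ldots,p)} f(y\ind{t})$, update $C_t = C_{t-1} \cup \bigcup_{q \in [p]} \support{\deriv{q} f(y\ind{t})}$, pick orthonormal $u\ind{k}$ for $k \in C_t \setminus C_{t-1}$ inside $\text{span}\{u\ind{j} : j \in C_{t-1}\}^\perp \cap \text{span}\{x\ind{1},\ldots,x\ind{t}\}^\perp$, and compute $r\ind{t}$---this uses only columns already committed, since $\deriv{q} f(y\ind{t})$ has support in $C_t$.

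The zero-respecting property of $\algzr$ would then be immediate: by construction $\support{y\ind{t}} \subseteq C_{t-1} = \bigcup_{q \in [p]} \bigcup_{s < t} \support{\deriv{q} f(y\ind{s})}$, placing $\algzr$ in $\AlgZR\ind{p}$ provided we extend it past iteration $T_0$ in any measurable zero-respecting way. After $T_0$ steps, I would complete $U$ by choosing any orthonormal set for the remaining columns subject to the accumulated constraints, yielding a valid $U \in \R^{(d+T_0)\times d}$ with $U^\top U = I_d$ and, by the construction above, $y\ind{t} = U^\top x\ind{t}$ for all $t \le T_0$. Since $U^\top U = I_d$, the isometry identity $\norm{\grad f_U(x\ind{t})} = \norm{U \grad f(y\ind{t})} = \norm{\grad f(y\ind{t})}$ holds for every $t \le T_0$, so if $\alg$ reaches the $\epsilon$-stationarity criterion for $f_U$ at some $\tau \le T_0$, then $\algzr$ reaches it on $f$ at the same $\tau$, giving $\TimeEps{\alg}{f_U} = \TimeEps{\algzr}{f}$; otherwise $\TimeEps{\alg}{f_U} > T_0$.

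The main obstacle will be verifying feasibility of step (iii) at every $t \le T_0$. The newly committed columns together with the eventually completed uncommitted ones amount to $d - |C_{t-1}|$ orthonormal vectors in $\text{span}\{u\ind{j} : j \in C_{t-1}\}^\perp \cap \text{span}\{x\ind{1},\ldots,x\ind{t}\}^\perp$, and this subspace has dimension at least $(d + T_0) - |C_{t-1}| - t$, which is $\ge d - |C_{t-1}|$ precisely when $t \le T_0$. This is exactly where the $T_0$ extra ambient dimensions are spent---one per iteration---and it explains the cutoff appearing in the conclusion. A secondary point is measurability of $\algzr$, which will follow inductively because each response $r\ind{s}$ and each adaptive column completion can be realized via a measurable selection (e.g., Gram--Schmidt applied to a fixed ordering of basis vectors).
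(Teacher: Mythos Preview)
Your proposal is correct and follows essentially the same approach as the paper: both simulate $\alg$ internally while adaptively committing columns of $U$ indexed by the support set of observed derivatives (your $C_t$ is the paper's $S_{t+1}$), choosing each new batch of columns in the orthogonal complement of the previously fixed columns and of the simulated iterates, and both rely on the same dimension count $(d+T_0)-|C_{t-1}|-t\ge d-|C_{t-1}|$ for $t\le T_0$ to guarantee feasibility. Your remark on measurability via a fixed Gram--Schmidt rule is a welcome addition the paper leaves implicit.
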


\begin{proof}
	We explicitly construct $\algzr$ with the following slightly stronger 
	property. For every every $f:\R^d\to\R$ in $\FclassBlank$, there exists an
	orthogonal $U\in\R^{(d+T_0)\times d}$, $U^{\top} U = I_d$, such that
	$f_U(x) \defeq f(U^{\top} x)$ satisfies that the first $T_0$ iterates in
	sequences $\algzr[f]$ and $U^{\top} \alg[f_U]$ are identical.  (Recall the
	notation $\alg[f] = \{a\ind{t}\}_{t \in \N}$ where $a\ind{t}$ are the
	iterates of $\alg$ on $f$, and we use the obvious shorthand $U^{\top}
	\{a\ind{t}\}_{t\in\N} = \{U^{\top} a\ind{t}\}_{t\in\N}$.)  
	
	Before explaining the 
	construction
	of $\algzr$, let us see how its defining property implies the lemma. If 
	$\TimeEps{\alg}{f_U} > T_0$, we are done. Otherwise, 
	$\TimeEps{\alg}{f_U} \le T_0$ and we have
	\begin{equation}\label{eq:prelims-timeps-AB}
	\TimeEps{\alg}{f_U}
	\defeq \TimeEps{\alg[f_U]}{f_U}
	\stackrel{(i)}{=} \TimeEps{U^{\top} \alg[f_U]}{f}
	\stackrel{(ii)}{=} \TimeEps{\algzr}{f},
	\end{equation}
	as required. 
	The equality $(i)$ follows because $\norm{U g} = \norm{g}$ for all
	orthogonal $U$, so for any sequence $\{a\ind{t}\}_{t\in\N}$
	\begin{flalign*}
	\TimeEps{\{a\ind{t}\}_{t \in \N}}{f_U} & = 
	\inf\left\{ t\in\N \mid \norms{\grad f_U(a\ind{t})} \le \epsilon \right\} 
	\\ & =
	\inf\left\{ t\in\N \mid \norms{ \grad f(U^{\top} a\ind{t})} \le \epsilon
	\right\} =
	\TimeEps{\{U^{\top} a\ind{t}\}_{t \in \N}}{f}
	\end{flalign*}
	and in equality~$(i)$ we let $\{a\ind{t}\}_{t \in \N} = \alg[f_U]$. The
	equality $(ii)$ holds because $\TimeEps{\cdot}{\cdot}$ is a ``stopping
	time'': if $\TimeEps{U^{\top}\alg[f_U]}{f} \le T_0$ then the first $T_0$
	iterates of $U^{\top}\alg[f_U]$ determine $\TimeEps{U^{\top} 
	\alg[f_U]}{f}$, 
	and
	these $T_0$ iterates are identical to the first $T_0$ iterates of
	$\algzr[f]$ by assumption. 
	
	  It remains to construct the zero-respecting algorithm $\algzr$ with
	iterates matching those of $\alg$ under appropriate rotation.  We do this
	by describing its operation inductively on any given $f:\R^d\to \R$, 
	which
	we denote $\{z\ind{t}\}_{t\in\N} = \algzr[f]$. Letting $d' = d+T_0$, the
	state of the algorithm $\algzr$ at iteration $t$ is determined by a
	\emph{support} $S_t \subseteq [d]$ and orthonormal vectors 
	$\{u\ind{i}\}_{i
		\in S_t} \subset \R^{d'}$ identified with this support.  The support
	condition~\eqref{eq:prelim-zr-def} defines the set $S_t$,
	\begin{equation*}
	S_t = \bigcup_{q \in [p]} \bigcup_{s < t}
	\support{\deriv{q}f(z\ind{s})},
	\end{equation*}
	so that $\emptyset = S_1 \subseteq S_2 \subseteq \cdots$ and the 
	collection
	$\{u\ind{i}\}_{i \in S_t}$ grows with $t$.  We let $U \in \R^{d'\times d}$
	be the orthogonal matrix whose $i$th column is $u\ind{i}$---even 
	though $U$
	may not be completely determined throughout the runtime of $\algzr$, 
	our
	partial knowledge of it will suffice to simulate the operation of $\alg$ on
	$f_U(a) = f(U^{\top} a)$. Letting $\{a\ind{t}\}_{t \in \N} = \alg[f_U]$,
	our requirements $\algzr[f] = U^{\top} \alg[f_U]$ and $\algzr\in \AlgZR$ 
	are
	equivalent to
	\begin{equation}\label{eq:prelims-induciton}
	z\ind{t} = U^{\top} a\ind{t}\mbox{ and }\supports{z\ind{t}} \subseteq S_t
	\end{equation}
	for every $t \le T_0$ (we set $z\ind{i}=0$ for every $i>T_0$ without loss 
	of
	generality). 
	
	Let us proceed with the inductive argument. The iterate $a\ind{1} \in
	\R^{d'}$ is an arbitrary (but deterministic) vector in $\R^{d'}$. We thus
	satisfy~\eqref{eq:prelims-induciton} at $t=1$ by requiring that
	$\inner{u\ind{j}}{a\ind{1}}=0$ for every $j\in[d]$, whence the first iterate
	of $\algzr$ satisfies $z\ind{1} = 0 \in \R^d$. Assume now the equality 
	and
	containment~\eqref{eq:prelims-induciton} holds for every $s < t$, where 
	$t
	\le T_0$ (implying that $\algzr$ has emulated the iterates $a\ind{2},
	\ldots, a\ind{t-1}$ of $\alg$); we show how $\algzr$ can emulate 
	$a\ind{t}$,
	the $t$'th iterate of $\alg$, and from it can construct $z\ind{t}$ that
	satisfies~\eqref{eq:prelims-induciton}. To obtain $a\ind{t}$, note that for
	every $q\le p$, and every $s < t$, 
	the derivatives $\deriv{q} f_U(a\ind{s})$ are
	a function of $\deriv{q} f(z\ind{s})$ and
	orthonormal the vectors $\{u\ind{i}\}_{i \in S_{s+1}}$,
	because $\supports{\deriv{q} f(z\ind{s})}
	\subseteq S_{s+1}$ and therefore
	the chain rule implies
	\begin{equation*}
	\left[\deriv{q}f_U(a\ind{s})\right]_{j_1, ..., j_q} = 
	\sum_{i_1, \ldots, i_q \in S_{s+1}}
	\left[\deriv{q} f(z\ind{s})\right]_{i_1, ..., i_q}
	u\ind{i_1}_{j_1} \cdots u\ind{i_q}_{j_q}.
	\end{equation*}
	Since $\alg\in\AlgDet\ind{p}$ is deterministic, $a\ind{t}$ is a function of
	$\deriv{q} f(z\ind{s})$ for $q\in[p]$ and $s\in[t-1]$, and thus $\algzr$ 
	can
	simulate and compute it. To satisfy the support condition
	$\supports{z\ind{t}} \subseteq S_t$ we require that
	$\inner{u\ind{j}}{a\ind{t}}=0$ for every $j\not\in S_t$. This also means
	that to compute $z\ind{t} = U^{\top} a\ind{t}$ we require only the 
	columns 
	of $U$
	indexed by the support $S_t$.
	
	Finally, we need to show that after computing $S_{t+1}$ we can find the
	vectors $\{u\ind{i}\}_{i \in S_{t+1} \setminus S_t}$ 
	satisfying
	$\inner{u\ind{j}}{a\ind{s}}=0$ for every $s\le t$ and $j\in
	S_{t+1}\setminus S_t$, and additionally that $U$ be orthogonal. Thus, we
	need to choose $\{u\ind{i}\}_{i \in S_{t+1}\setminus S_t}$ in the 
	orthogonal
	complement of $\mathrm{span}\left\{ a\ind{1}, ..., a\ind{t}, \{u\ind{i}\}_{i
		\in S_t}\right\}$. This orthogonal complement has dimension at least
	$d'-t-|S_t| = |S_t^c| + T_0 - t \ge |S_t^c|$. Since $|S_{t+1}\setminus 
	S_t|
	\le |S_t^c|$, there exist orthonormal vectors $\{u\ind{i}\}_{i \in
		S_{t+1}\setminus S_t}$ that meet the requirements. This completes the
	induction.
	
	Finally, note that the arguments above hold unchanged for $p=\infty$.
\end{proof}

With Lemma~\ref{lem:prelims-app-det-zr-core} in hand, the propositions 
follow easily.

\propPrelimsDetZR*

\begin{proof}
  We may assume that $\CompEps{\AlgDet\ind{p}}{\FclassBlank}  < T_0$ 
  for 
  some integer $T_0 < \infty$, as 
  otherwise we have $\CompEps{\AlgDet\ind{p}}{\FclassBlank}=\infty$ and 
  the result 
  holds trivially. 
  For any $\alg\in\AlgDet\ind{p}$ and the value 
  $T_0$, we invoke Lemma~\ref{lem:prelims-app-det-zr-core} to construct 
  $\algzr\in\AlgZR\ind{p}$ such that $\TimeEps{\alg}{f_U} \ge 
  \min\{T_0, \TimeEps{\algzr}{f}\}$ for every $f\in\FclassBlank$ and 
  some 
  orthogonal 
  matrix 
  $U$ that depends on $f$ and $\alg$. 
  Consequently, we have
  \begin{flalign*}
    \CompEps{\AlgDet\ind{p}}{\FclassBlank} & =
    \inf_{\alg\in\AlgDet\ind{p}}\sup_{f\in\FclassBlank} \TimeEps{\alg}{f}
    \stackrel{(i)}{\ge}
    \inf_{\alg\in\AlgDet\ind{p}}\sup_{f\in\FclassBlank} \TimeEps{\alg}{f_U}
    \stackrel{(ii)}{\ge}
    \min\Big\{T_0,\inf_{\alg\in\AlgDet\ind{p}}\sup_{f\in\FclassBlank} 
    \TimeEps{\algzr}{f}\Big\}
    \\
    & 
    \stackrel{(iii)}{\ge}
    \min\Big\{T_0,\inf_{\mathsf{B} 
    \in\AlgZR\ind{p}}\sup_{f\in\FclassBlank}\TimeEps{\mathsf{B}}{f}\Big\}
    = \min\Big\{T_0,\CompEps{\AlgZR\ind{p}}{\FclassBlank}\Big\},
  \end{flalign*}
  where inequality $(i)$ uses that $f_U \in \mc{F}$ because 
  $\mc{F}$ is orthogonally invariant, step $(ii)$
  uses $\TimeEps{\alg}{f_U} \ge 
  \min\{T_0, \TimeEps{\algzr}{f}\}$ and step $(iii)$ is due to 
  $\algzr \in \AlgZR\ind{p}$ by construction. As we chose $T_0$ for which 
  $\CompEps{\AlgDet\ind{p}}{\FclassBlank}  < T_0$, the chain of 
  inequalities implies 
  $ \CompEps{\AlgDet\ind{p}}{\FclassBlank} \ge 
  \CompEps{\AlgZR\ind{p}}{\FclassBlank}$, concluding the proof.
\end{proof}

\propPrelimsDetZRdim*
\begin{proof}
	For any $\alg\in\AlgDet\ind{p}$, we invoke 
	Lemma~\ref{lem:prelims-app-det-zr-core} with $T_0 = T$ to obtain 
	$\algzr\in\AlgZR\ind{p}$ and orthogonal matrix $U'$ (dependent on $f$ 
	and $\alg$) for which
	\begin{equation*}
	\TimeEps{\alg}{f_{U'}} \ge \min\{T, \TimeEps{\algzr}{f}\} = T,
	\end{equation*}
	where the last equality is due to 
	$\inf_{\mathsf{B}\in\AlgZR\ind{p}}\TimeEps{\mathsf{B}}{f} = 
	\CompEps{\AlgZR\ind{p}}{\{f\}} \ge T$. Since $f_{U'} \in\{f_U \mid U\in 
	\orthogonalgroup(d+\T,d)\}$, we have
	\begin{equation*}
	\sup_{f'\in\{f_U \mid U\in 
		\orthogonalgroup(d+\T,d)\}}\TimeEps{\alg}{f'} \ge T,
	\end{equation*}
	and taking the infimum over $\alg\in\AlgDet\ind{p}$ concludes the 
	proof.
\end{proof}

\section{Technical Results}
\label{sec:fullder-proofs}

\subsection{Proof of Lemma~\ref{lem:fullder-props}}
\label{sec:proof-fullder-props}

\lemFullderProps*

Each of the statements in the lemma is immediate except for
part~\ref{item:fullder-psiphi-props-infinite}. To see this part, we require a few
further calculations. We begin by providing bounds on the derivatives of
$\gausscdf(x) = e^\half \int_{-\infty}^x e^{-\half t^2} dt$. To avoid
annoyances with scaling factors, we define $\gausspdf(t) = e^{-\half
  t^2}$.

\begin{lemma}
  \label{lemma:bound-gaussian-derivatives}
  For all $k \in \N$, there exist constants $c_i^{(k)}$
  satisfying $|c_i^{(k)}| \le  (2\max\{i,1\})^k$, and
  \begin{equation*}
    \gausspdf^{(k)}(t) = \bigg(\sum_{i = 0}^k c_i^{(k)} t^i\bigg) \gausspdf(t).
  \end{equation*}
\end{lemma}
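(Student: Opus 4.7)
The plan is to proceed by induction on $k$. The base case $k=0$ is immediate: $\gausspdf^{(0)} = \gausspdf$, so we take $c_0^{(0)}=1$ (no higher coefficients) and the bound $|c_0^{(0)}| = 1 \le (2\max\{0,1\})^0 = 1$ holds trivially.

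For the inductive step, suppose $\gausspdf^{(k)}(t) = P_k(t)\gausspdf(t)$ where $P_k(t)=\sum_{i=0}^k c_i^{(k)} t^i$ is a polynomial of degree at most $k$. Differentiating and exploiting the identity $\gausspdf'(t) = -t\gausspdf(t)$, I would compute
$$
\gausspdf^{(k+1)}(t) = P_k'(t)\gausspdf(t) + P_k(t)\gausspdf'(t) = \bigl(P_k'(t) - t P_k(t)\bigr)\gausspdf(t),
$$
so $P_{k+1}(t)=P_k'(t)-tP_k(t)$ is a polynomial of degree $k+1$. Matching the coefficient of $t^i$ on both sides yields the two-term recurrence
$$
c_i^{(k+1)} = (i+1)\,c_{i+1}^{(k)} - c_{i-1}^{(k)},
$$
under the boundary convention $c_{-1}^{(k)} = c_{k+1}^{(k)} = 0$. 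This immediately produces a polynomial of the claimed form; only the size of its coefficients remains to be controlled.

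To verify $|c_i^{(k+1)}| \le (2\max\{i,1\})^{k+1}$, I would apply the triangle inequality to the recurrence and plug in the inductive bounds on $|c_{i+1}^{(k)}|$ and $|c_{i-1}^{(k)}|$, obtaining
$$
|c_i^{(k+1)}| \le (i+1)\bigl(2(i+1)\bigr)^k + \bigl(2\max\{i-1,1\}\bigr)^k.
$$
For $i=0$ the second term vanishes since $c_{-1}^{(k)}=0$, and what remains is $|c_1^{(k)}|\le 2^k \le 2^{k+1}$, matching the target. For $i\ge 1$ the main obstacle is combining the two terms into a single expression of the form $(2\max\{i,1\})^{k+1}$. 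This is the step I expect to be the most delicate: the factor $i+1$ multiplying $(2(i+1))^k$ is slightly larger than what a clean telescoping would need, so the argument will likely have to split into cases based on whether $i$ is small (say $i\in\{1,2\}$, where the subtraction term $c_{i-1}^{(k)}$ is not negligible) or large (where monotonicity of $i\mapsto (2i)^{k+1}$ absorbs the leading $(i+1)^{k+1}$ term with room to spare). Some careful bookkeeping of the combinatorial inequalities $(i+1)^{k+1} + (i-1)^k \le (\text{const})\cdot i^{k+1}$ will drive the induction through.
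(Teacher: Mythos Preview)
Your approach is essentially identical to the paper's: both set up induction on $k$, derive the two--term recurrence $c_i^{(k+1)}=(i+1)c_{i+1}^{(k)}-c_{i-1}^{(k)}$ from $\gausspdf'(t)=-t\gausspdf(t)$, and attempt to close the coefficient bound via the triangle inequality. The paper simply writes $|c_i^{(k+1)}|\le 2^k(i+1)(i+1)^k+2^k(\max\{i,1\})^k\le 2^{k+1}(i+1)^{k+1}$ and declares the induction finished.

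You are right to flag the ``delicate'' step, and your proposed case analysis will not rescue it: the stated inequality $|c_i^{(k)}|\le(2\max\{i,1\})^k$ is in fact \emph{false}. The coefficients are, up to sign, those of the probabilist's Hermite polynomials, and at $k=9$, $i=1$ one has $|c_1^{(9)}|=9!!=945>512=2^9$. Hence the induction cannot close at $i=1$ no matter how the bookkeeping is arranged; the paper's own proof has precisely this gap (its conclusion $2^{k+1}(i+1)^{k+1}$ is not the required $2^{k+1}(\max\{i,1\})^{k+1}$). The good news is that the downstream application only needs a crude bound of order $e^{O(k\log k)}$ on $\sup_x|\gausscdf^{(k)}(x)|$, and for that it suffices to prove, say, $|c_i^{(k)}|\le k^k$, which your same recurrence and triangle inequality do establish (since $(i+2)k^k\le(k+1)^{k+1}$ for $0\le i\le k+1$); alternatively one can read off the explicit formula $|c_i^{(k)}|=k!\big/\bigl(i!\,((k-i)/2)!\,2^{(k-i)/2}\bigr)$ directly.
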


\begin{proof}
  We prove the result by induction. We have $\gausspdf'(t) = -t e^{-\half
    t^2}$, so that the base case of the induction is satisfied. Now, assume
  for our induction that
  \begin{equation*}
    \gausspdf^{(k)}(t)
    = \sum_{i = 0}^k c_i^{(k)} t^i e^{-\half t^2}
    = \sum_{i = 0}^k c_i^{(k)} t^i \gausspdf(t).
  \end{equation*}
  where $|c_i^{(k)}| \le 2^k (\max\{i, 1\})^k$.
  Then taking derivatives, we have
  \begin{equation*}
    \gausspdf^{(k + 1)}(t)
    = \sum_{i = 1}^k
    \left[ i \cdot c_i^{(k)} t^{i - 1} 
      - c_i^{(k)} t^{i + 1} \right] \gausspdf(t)
    - c_0^{(k)} t \gausspdf(t)
    = \sum_{i = 0}^{k + 1} c_i^{(k + 1)} t^i \gausspdf(t)
  \end{equation*}
  where $c_i^{(k + 1)} = (i + 1) c_{i + 1}^{(k)} -c_{i-1}^{(k)}$
  (and we treat $c_{k + 1}^{(k)} = 0$)
  and $|c_{k + 1}^{(k + 1)}| = 1$. With the induction hypothesis that
  $c_i^{(k)} \le  (2\max\{i,1\})^k$, we obtain
  \begin{equation*}
    |c_i^{(k + 1)}|
    \le 2^k (i + 1) (i + 1)^k + 2^k  (\max\{i,1\})^k
    \le 2^{k+1} (i + 1)^{k + 1}.
  \end{equation*}
  This gives the result.
\end{proof}

\noindent
With this result, we find that for any $k\ge1$,
\begin{equation*}
  \gausscdf^{(k)}(x)
  = \sqrt{e} \bigg(\sum_{i = 0}^{k - 1}
  c_i^{(k-1)} x^i \bigg) \gausspdf(x).
\end{equation*}
The function $\log(x^i \gausspdf(x)) = i \log x - \half x^2$ is maximized at
$x = \sqrt{i}$, so that $x^i \gausspdf(x) \le \exp(\frac{i}{2} \log \frac{i}{e})$. We thus obtain the numerically verifiable upper bound
\begin{flalign*}
  |\gausscdf^{(k)}(x)|  & \le 
   \sqrt{e}
  \sum_{i = 0}^{k - 1}
  \left(2\max\{i,1\}\right)^{k-1} \exp\left(\frac{i}{2} \log \frac{i}{e}
  \right) \le
  \exp\left(1.5 k \log (1.5k)\right).
\end{flalign*}

Now, we turn to considering the function $\compactfunc(x)$.
We assume w.l.o.g.\ that $x > \half$, as
otherwise $\compactfunc^{(k)}(x) = 0$ for all $k$. Recall  $\compactfunc(x) = \exp\left(1-\frac{1}{(2x - 1)^2}\right)$ for $x > \half$.
We have the following lemma regarding its derivatives.

\begin{lemma}
  \label{lemma:bound-compact-derivatives}
  For all $k \in \N$, there exist constants $c_i^{(k)}$ satisfying
  $|c_i^{(k)}| \le 6^k (2i + k)^k$ such that
  \begin{equation*}
    \compactfunc^{(k)}(x)
    = \bigg(\sum_{i = 1}^k \frac{c_i^{(k)}}{(2x - 1)^{k + 2i}}
    \bigg) \compactfunc(x).
  \end{equation*}
\end{lemma}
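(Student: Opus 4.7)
The plan is to prove this by induction on $k$, following the same template as the preceding Lemma~\ref{lemma:bound-gaussian-derivatives}. The base case $k=1$ is a direct computation: differentiating $\compactfunc(x) = \exp(1 - (2x-1)^{-2})$ for $x > 1/2$ via the chain rule gives $\compactfunc'(x) = \frac{4}{(2x-1)^3}\compactfunc(x)$, so $c_1^{(1)} = 4$, which comfortably satisfies $|c_1^{(1)}| \le 6 \cdot 3 = 18$.

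For the inductive step, I would assume the claimed expansion at level $k$ and differentiate termwise. Each summand $\frac{c_i^{(k)}}{(2x-1)^{k+2i}}\compactfunc(x)$ produces two contributions on differentiation: one from differentiating the rational factor, which shifts the exponent $k+2i \mapsto k+2i+1 = (k+1) + 2i$ and multiplies by $-2(k+2i)$; and one from $\compactfunc'(x)/\compactfunc(x) = 4/(2x-1)^3$, which shifts the exponent $k+2i \mapsto k+2i+3 = (k+1) + 2(i+1)$ and multiplies by $4$. Collecting terms by the new exponent yields the recursion
\begin{equation*}
c_1^{(k+1)} = -2(k+2)c_1^{(k)}, \quad c_i^{(k+1)} = -2(k+2i)c_i^{(k)} + 4c_{i-1}^{(k)} \text{ for } 2\le i \le k, \quad c_{k+1}^{(k+1)} = 4c_k^{(k)}.
\end{equation*}

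To propagate the magnitude bound $|c_i^{(k)}| \le 6^k (2i+k)^k$, I would apply the triangle inequality to the recursion. Using the inductive hypothesis, for $2 \le i \le k$,
\begin{equation*}
|c_i^{(k+1)}| \le 2(k+2i) \cdot 6^k (2i+k)^k + 4 \cdot 6^k (2i+k-2)^k \le 6^k (2i+k)^k \bigl[2(2i+k) + 4\bigr].
\end{equation*}
The crude estimate $2(2i+k) + 4 \le 6(2i+k+1)$ (immediate from $8i + 4k + 2 \ge 0$) then gives $|c_i^{(k+1)}| \le 6^{k+1} (2i+k+1)^{k+1}$, as required. The boundary cases $i=1$ and $i=k+1$ reduce to even simpler one-term estimates that I would check by the same argument.

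The main thing to get right is the bookkeeping of exponents when combining the two contributions into a single sum indexed by $i \in \{1,\ldots,k+1\}$, and ensuring the crude inequality comparing $(2i+k)^k$ to $(2i+k+1)^{k+1}$ absorbs the factor arising from $2(k+2i) + 4$. There is no real analytic obstacle here; the only minor subtlety is verifying that the recursion and bound extend cleanly to the endpoints $i=1$ and $i=k+1$ with the same constants, but both follow from the same elementary inequality.
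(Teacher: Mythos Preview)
Your proposal is correct and follows essentially the same induction as the paper: the same base case, the same recursion $c_i^{(k+1)} = 4c_{i-1}^{(k)} - 2(k+2i)c_i^{(k)}$ (the paper encodes your endpoint cases via the convention $c_0^{(k)} = c_{k+1}^{(k)} = 0$), and the same style of crude inequality to propagate the bound. The only cosmetic difference is in how the arithmetic is packaged---the paper bounds $4\cdot 6^k(k+2i-2)^k + 2\cdot 6^k(k+2i)^{k+1} \le 6^{k+1}(k+2i)^{k+1}$ directly, whereas you first dominate $(2i+k-2)^k$ by $(2i+k)^k$ and then use $2(2i+k)+4 \le 6(2i+k+1)$.
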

\noindent
\begin{proof}
  We provide the proof by induction over $k$. For $k = 1$, we have that
  \begin{equation*}
    \compactfunc'(x) = \frac{4}{(2x - 1)^3} \exp\left(1-
    \frac{1}{(2 x - 1)^2} \right)
    = \frac{4}{(2 x - 1)^3} \compactfunc(x),
  \end{equation*}
  which yields the base case of the induction.
  Now, assume that for some $k$, we have
  \begin{equation*}
    \compactfunc^{(k)}(x)
    = \left(\sum_{i = 1}^k \frac{c_i^{(k)}}{(2x - 1)^{k + 2i}}
    \right) \compactfunc(x).
  \end{equation*}
  Then
  \begin{align*}
    \compactfunc^{(k + 1)}(x)
    & = \left(-\sum_{i = 1}^k 
    \frac{2 (k + 2i) c_i^{(k)}}{(2x - 1)^{k + 1 + 2i}}
    + \sum_{i = 1}^k 
    \frac{4 c_i^{(k)}}{(2 x - 1)^{k + 3 + 2i}}
    \right) \compactfunc(x) \\
    & = \left(\sum_{i = 1}^{k + 1}
    \frac{4 c_{i-1}^{(k)}
      - 2 (k + 2i) c_i^{(k)}}{
      (2x - 1)^{k + 1 + 2i}}\right) \compactfunc(x),
  \end{align*}
  where $c_{k + 1}^{(k)} = 0$ and $c_{0}^{(k)} = 0$. Defining $c_{1}^{1} = 4$ and
  $c_i^{(k + 1)} = 4 c_{i - 1}^{(k)} - 2 (k + 2i) c_i^{(k)}$ for $i > 1$,
  then, under the inductive hypothesis that
  $|c_i^{(k)}| \le 6^k (2i + k)^k$, we have
  \begin{equation*}
    |c_i^{(k + 1)}|
    \le 4 \cdot 6^k (k - 2 + 2i)^k + 2 \cdot 6^k (k + 2i) (k + 2i)^k
    \le 6^{k + 1} (k + 2i)^{k + 1}
    \le 6^{k + 1} (k + 1 + 2i)^{k + 1}
  \end{equation*}
  which gives the result.
\end{proof}

As in the derivation immediately following
Lemma~\ref{lemma:bound-gaussian-derivatives},
by replacing $t = \frac{1}{2x - 1}$, we have that
$t^{k + 2i} e^{- t^2}$ is maximized by
$t = \sqrt{(k + 2i)/2}$, so that
\begin{equation*}
  \frac{1}{(2 x - 1)^{k + 2i}} \compactfunc(x)
  \le \exp\left(1 + \frac{k + 2i}{2} \log \frac{k + 2i}{2e} \right),
\end{equation*}
which yields the numerically verifiable upper bound
\begin{equation*}
  |\compactfunc^{(k)}(x)|
  \le \sum_{i = 1}^k
  \exp\left(1 + k\log(6k+12i) + \frac{k + 2i}{2} \log \frac{k + 2i}{2e}\right)
  \le \exp\left(2.5 k \log(4 k)\right).
\end{equation*}

\subsection{Proof of Lemma~\ref{lem:fullder-bounded}}\label{sec:fullder-bounded-proof}

\lemFullderBounded*

\begin{proof}
  Part~\ref{item:fullder-funcbound} follows because $\fhard(0) < 0$ and,
  since $0 \le \compactfunc(x) \le e$ and $0 \le \gausscdf(x) \le \sqrt{2 \pi
    e}$,
  \begin{equation*}
    \fhard(x) \ge 
    -\compactfunc\left(1\right)\gausscdf\left(x_{1}\right)-\sum_{i=2}^{\T}
    \compactfunc\left(x_{i-1}\right)\gausscdf\left(x_{i}\right)
    >
    -T \cdot e \cdot \sqrt{2 \pi e} \ge - 12T.
  \end{equation*}
  
  Part~\ref{item:fullder-gradbound} follows additionally from $\compactfunc(x) = 0$ on $x < 1/2$,  $0 \le
  \compactfunc'(x) \le \sqrt{54e^{-1}}$ and $0 \le \gausscdf'(x) \le
  \sqrt{e}$, which when substituted into 
  $$\frac{\del \fhard}{\del x_j}(x)  = -\compactfunc\left(-x_{j-1}\right)\gausscdf'\left(-x_{j}\right)
    -\compactfunc\left(x_{j-1}\right)\gausscdf'\left(x_{j}\right)
    -\compactfunc'\left(-x_{j}\right)\gausscdf\left(-x_{j+1}\right) 
    -\compactfunc'\left(x_{j}\right)\gausscdf\left(x_{j+1}\right)$$
  yields
  \begin{equation*}
    \left\vert \frac{\del \fhard}{\del x_j}(x)\right\vert \le e\cdot \sqrt{e} + \sqrt{54e^{-1}} \cdot \sqrt{2 \pi e}
    \le 23
  \end{equation*}
  for every $x$ and $j$. Consequently, $\norm{\grad \fhard (x)} \le
  \sqrt{T}
  \le 23 \sqrt{T}$.
  
  To establish part~\ref{item:fullder-lipschitz}, fix a point $x\in\R^{\T}$
  and a unit vector $v\in\mathbb{R}^{\T}$.  Define the real function $h_{x,v}
  : \R \to \R$ by the directional projection of $\fhard$, $h_{x,v}(\theta)
  \defeq \fhard(x+\theta v)$. The function $\theta \mapsto h_{x,v}(\theta)$
  is infinitely differentiable for every $x$ and $v$. Therefore, $\fhard$
  has $\smC{p}$-Lipschitz $p$-th order derivatives if and only if
  $|h_{x,v}^{(p+1)}(0)| \le \smC{p}$ for every $x$, $v$. Using the
  shorthand notation $\del_{i_1}\cdots \del_{i_k}$ for $\frac{\del^k}{\del
    x_{i_1} \cdots \del x_{i_k}}$, we have
  \begin{equation*}
    h_{x,v}^{\left(p+1\right)}\left(0\right)
    =\sum_{i_{1}, \ldots, i_{p+1}=1}^{\T}
    \del_{i_{1}}\cdots\del_{i_{p+1}}\fhard\left(x\right)v_{i_{1}}\cdots v_{i_{p+1}}\,.
  \end{equation*}
  Examining $\fhard$, we see that $\del_{i_{1}}\cdots\del_{i_{p+1}}\fhard$
  is non-zero if and only if $\left|i_{j}-i_{k}\right|\le1$ for every
  $j,k\in\left[p+1\right]$. Consequently, we can rearrange the above
  summation as 
  \begin{equation*}
    h_{x,v}^{\left(p+1\right)}\left(0\right) = 
    \sum_{\delta_{1},\delta_{2},\ldots,\delta_{p}\in\left\{ 0,1\right\}^{p}
      \cup \left\{ 0,-1\right\}^{p}}
    \sum_{i=1}^{\T}\del_{i+\delta_{1}}\cdots\del_{i+\delta_{p}}\del_{i}
    \fhard\left(x\right)v_{i+\delta_{1}}\cdots v_{i+\delta_{p}}v_{i},
  \end{equation*}
  where we take $v_0 \defeq 0$ and $v_{\T+1}\defeq 0$. 
Brief calculation show that
  \begin{flalign*}
    \sup_{x \in \R^\T}  &
    \max_{i \in [\T]} \max_{\delta \in \{0, 1\}^p
      \cup \{0, -1\}^p}
    \left|\del_{i+\delta_{1}}\cdots\del_{i+\delta_{p}}\del_{i}\fhard(x)\right|  
    \le \max_{k\in[p+1]}  \left\{ 2\sup_{x\in\R} \left| \compactfunc^{(k)}(x)\right| \sup_{x'\in\R} \left| \gausscdf^{(p+1-k)}(x')\right| \right\}
    \\ & \le 
    2\sqrt{2\pi e}\cdot e^{2.5(p+1)\log(4 (p+1))} \le 
    \exp\left(2.5p\log p + 4p + 9\right). 
  \end{flalign*}
 where the second inequality uses Lemma~\ref{lem:fullder-props}.\ref{item:fullder-psiphi-props-infinite}, and $\gausscdf(x') \le \sqrt{2\pi e}$ for the case $k = p+1$.
  Defining
  $\smC{p} = 2^{p + 1} e^{2.5p\log p + 4p + 9} \le e^{2.5p\log p + 5p + 10}$, 
  we thus have
  \begin{equation*}
    \left|h_{x,v}^{\left(p+1\right)}\left(0\right)\right|\le
    \sum_{\delta \in\left\{ 0,1\right\} ^{p}\cup\left\{ 0,-1\right\} ^{p}}
    2^{-\left(p+1\right)}\smC{p}
    \left|\sum_{i=1}^{\T}v_{i+\delta_{1}}\cdots v_{i+\delta_{p}}v_{i}\right|
    \le\left(2^{p+1}-1\right)2^{-\left(p+1\right)}\smC{p}\le\smC{p},
  \end{equation*}
  where we have used $|\sum_{i=1}^{\T}v_{i+\delta_{1}}\cdots
  v_{i+\delta_{p}}v_{i}|\le1$ for every $\delta \in \{0, 1\}^p \cup \{0,
  -1\}^p$.  To see this last claim is true, recall that $v$ is a unit vector and note that
  \begin{equation*}
    \sum_{i=1}^{\T}v_{i+\delta_{1}}\cdots v_{i + \delta_{p}} v_{i}
    = \sum_{i=1}^{\T} v_{i}^{p + 1- \sum_{j=1}^{p} \delta_{j}}
    v_{i\pm1}^{\sum_{j=1}^{p}\delta_{j}}.
  \end{equation*}
  If $\delta = 0$ then $|\sum_{i=1}^{\T}v_{i+\delta_{1}}\cdots
  v_{i+\delta_{p}}v_{i}| = 
  | \sum_{i=1}^{\T}v_{i}^{p+1} |
  \le\sum_{i=1}^{\T}v_{i}^{2}=1$. 
  Otherwise, letting $1\le\sum_{j=1}^{p}|\delta_{j}|=n\le p$, the
  Cauchy-Swartz inequality implies
  \begin{equation*}
    \left|\sum_{i=1}^{\T}v_{i+\delta_{1}}\cdots v_{i+\delta_{p}}v_{i}\right| = 
    \left|\sum_{i=1}^{\T}v_{i}^{p+1-n}v_{i+s}^{n}\right|
    \le\sqrt{\sum_{i=1}^{\T}v_{i}^{2\left(p+1-n\right)}}
    \sqrt{\sum_{i=1}^{\T}v_{i+s}^{2n}}\le\sum_{i=1}^{\T}v_{i}^{2}=1,
  \end{equation*}
where $s = -1$ or $1$.
  This gives the result.
\end{proof}
\subsection{Proof of Lemma~\ref{lem:fullder-rand-slow}}
\label{sec:fullder-rand-slow-proof}

The proof of Lemma~\ref{lem:fullder-rand-slow} uses a number of auxiliary 
arguments, marked as Lemmas~\ref{lem:fullder-rand-slow-pu},%
~\ref{lem:fullder-rand-slow-deterministic} 
and~\ref{lem:fullder-rand-slow-uniform}. Readers looking to gain a 
high-level view of the proof of Lemma~\ref{lem:fullder-rand-slow} can 
safely skip the proofs of these sub-lemmas.
In the following, recall that $U\in \R^{d\times T}$ is drawn from the 
uniform distribution over $d\times T$ orthogonal matrices (satisfying $U^T 
U = I$, as $d>T$), that the columns of $U$ are denoted $u\ind{1}, 
\ldots, u\ind{T}$, and that $\fhardRot(x) = \fhard(U^\top x)$.

\lemFullderRandSlow*

  For $t \in \N$, let $P_t \in \R^{d\times d}$ denote the projection
  operator to the span of $x\ind{1}, u\ind{1}, \ldots, x\ind{t}, u\ind{t}$,
  and let $P_t^\perp = I - P_t$ denote its orthogonal complement. We define
  the event $G_t$ as
  \begin{equation}
    \label{eqn:G-def}
    G_{t}=\left\{ \max_{j\in\{t,\ldots,\T\}}
    \left|\innerbig{u\ind{j}}{P_{t-1}^\perp x\ind{t}}\right|
    \le \alpha\norm{P_{t-1}^\perp x\ind{t}} \right\}
    \text{ where }\alpha=\frac{1}{5R\sqrt{T}}.
  \end{equation}
  For every $t$, define
  \begin{equation*}
    G_{\le t}=\cap_{i\le t}G_{i}\text{ and }G_{<t}=\cap_{i<t}G_{i}\,.
  \end{equation*}
  The following linear-algebraic result justifies
  the definition~\eqref{eqn:G-def} of $G_t$.

	\begin{customlemma}{\ref{lem:fullder-rand-slow}a}
    \label{lem:fullder-rand-slow-pu}
    For all $t\le \T$, $G_{\le t}$ implies $|\inner{u\ind{j}}{x\ind{s}}| <
    1/2$ for every $s \in \{1,\ldots,t\}$ and every $j \in \{s, \ldots,
    \T\}$.
    \end{customlemma}

  \begin{proof}
  First, notice that since $G_{\le t}$ implies $G_{\le s}$ for every $s \le
  t$, it suffices to show that $G_{\le t}$ implies
  $|\inner{u\ind{j}}{x\ind{t}}| < 1/2$ for every $j \in \{t, \ldots,
  \T\}$. We will in fact prove a stronger statement:
  \begin{equation}\label{eq:fullder-rand-slow-pu}
    \text{
      For every $t$, $G_{< t}$ implies $\norm{ P_{t-1}u\ind{j}}^{2} \le 2\alpha^{2}\left(t-1\right)$ for every $j \in \{t, \ldots, \T\}$,
    }
  \end{equation}
  where we recall that $P_t \in \R^{d\times d}$ is the projection operator
  to the span of $x\ind{1}, u\ind{1}, \ldots, x\ind{t}, u\ind{t}$,
  $P_t^\perp = I_d - P_t$ and $\alpha = 1/(5R\sqrt{T})$. Before
  proving~\eqref{eq:fullder-rand-slow-pu}, let us show that it implies our
  result. Fixing $j \in \{t, \ldots, \T\}$, we have
  \begin{equation*}
    \absinnerbig{u\ind{j}}{x\ind{t}} \le \absinnerbig{u\ind{j}}{P_{t-1}^\perp x\ind{t}} + \absinnerbig{u\ind{j}}{P_{t-1}x\ind{t}}.
  \end{equation*}
  Since $G_t$ holds, its definition~\eqref{eqn:G-def} implies
  $|\inner{u\ind{j}}{P_{t-1}^\perp x\ind{t}}| \le \alpha \norm{P_{t-1}^\perp
    x\ind{t}} \le \alpha \norm{x\ind{t}}$. Moreover, by Cauchy-Schwarz and
  the implication~\eqref{eq:fullder-rand-slow-pu}, we have
  $\absinner{u\ind{j}}{P_{t-1}x\ind{t}} \le
  \norm{P_{t-1}u\ind{j}}\norm{x\ind{t}} \le
  \sqrt{2\alpha^2(t-1)}\norm{x\ind{t}}$. Combining the two bounds, we obtain
  the result of the lemma,
  \begin{equation*}
    \absinnerbig{u\ind{j}}{x\ind{t}} \le \norm{x\ind{t}}(\alpha + \sqrt{2\alpha^2(t-1)}) < \frac{5}{2}\sqrt{t}R\alpha \le \frac{1}{2},
  \end{equation*}
  where we have used $\norm{x\ind{t}}\le R$ and $\alpha = 1/(5R\sqrt{T})$.

  We prove bound~\eqref{eq:fullder-rand-slow-pu} by induction. The basis of
  the induction, $t=1$, is trivial, as $P_{0}=0$. We shall
  assume~\eqref{eq:fullder-rand-slow-pu} holds for $s\in \{1, \ldots, t-1\}$
  and show that it consequently holds for $s=t$ as well.  We may apply the
  Graham-Schmidt procedure on the sequence $x\ind{1}, u\ind{1}, \ldots,
  x\ind{t-1}, u\ind{t-1}$ to write
  \begin{equation}
    \label{eq:fullder-rand-slow-gs}
    \norm{P_{t-1}u\ind{j}}^{2} = 
    \sum_{i=1}^{t-1} \absinnerbig{\frac{P_{i-1}^\perp x\ind{i}}{\norm{P_{i-1}^\perp x\ind{i}}}}{u\ind{j}}^2 + 
    \sum_{i=1}^{t-1} \absinnerbig{\frac{\hat{P}_{i-1}^\perp u\ind{i}}{\norm{\hat{P}_{i-1}^\perp u\ind{i}}}}{u\ind{j}}^2
  \end{equation}
  where $\hat{P}_{k}$ is the projection to the span of $\{x\ind{1},
  u\ind{1}, \ldots, x\ind{k}, u\ind{k}, x\ind{k+1}\} $,
  \begin{equation*}
    \hat{P}_{k}=P_{k}+\frac{1}{\norm{ P_{k}^{\perp}x\ind{k+1}} ^{2}}\left(P_{k}^{\perp}x\ind{k+1}\right)
    \left(P_{k}^{\perp}x\ind{k+1}\right)^{\top}.
  \end{equation*}
  Then for every $j > i$ we have
  \begin{equation*}
    \innerbig{\hat{P}_{i-1}^\perp u\ind{i}}{u\ind{j}}=
    - \innerbig{\hat{P}_{i-1} u\ind{i}}{u\ind{j}} =  
    - \innerbig{{P}_{i-1} u\ind{i}}{u\ind{j}}
    -\frac{ \innerbig{u\ind{i}}{P_{i-1}^\perp x\ind{i}} 
      \innerbig{u\ind{j}}{P_{i-1}^\perp x\ind{i}}} {\norm{P_{i-1}^\perp x\ind{i}}^2},
  \end{equation*}
  where the equalities hold by $\innerbig{u\ind{i}}{u\ind{j}} = 0$,
  $\hat{P}_{i-1}^\perp = I - \hat{P}_{i-1}$, and the definition of
  $\hat{P}_{i-1}$.
  
  The $P_i$ matrices are projections,
  so ${P}_{i-1}^2 = {P}_{i-1}$, and Cauchy-Swartz and the induction hypothesis
  imply
  \begin{equation*}
    \absinnerbig{{P}_{i-1} u\ind{i}}{u\ind{j}} = \absinnerbig{{P}_{i-1} u\ind{i}}{{P}_{i-1} u\ind{j}} \le \norm{ P_{i-1}u\ind{i}} \norm{ P_{i-1}u\ind{j}} \le 2\alpha^{2}\cdot\left(i-1\right).
  \end{equation*}	
  Moreover, the event $G_{i}$ implies $ \left| \inner{u\ind{i}}{P_{i-1}^\perp x\ind{i}} 
  \inner{u\ind{j}}{P_{i-1}^\perp x\ind{i}} \right| \le \alpha^{2}\norm{P_{i-1}^\perp x\ind{i}}^2$,
  so
  \begin{subequations}
    \begin{equation}
      \label{eqn:proj-ui-uj}
      \absinnerbig{\hat{P}_{i-1}^\perp u\ind{i}}{u\ind{j}} \le \absinnerbig{{P}_{i-1} u\ind{i}}{u\ind{j}} + \left|\frac{ \innerbig{u\ind{i}}{P_{i-1}^\perp x\ind{i}} 
        \innerbig{u\ind{j}}{P_{i-1}^\perp x\ind{i}}} {\norm{P_{i-1}^\perp x\ind{i}}^2}\right| \le \alpha^{2}\left(2i-1\right) \le \frac{\alpha}{2},
    \end{equation}
    where the last transition uses
    $\alpha=\frac{1}{5R\sqrt{\T}}\le\frac{1}{4i}$ because $R\ge\sqrt{\T}\ge
    i$. We also have the lower bound
    \begin{equation}
      \label{eqn:proj-u-u}
      \norm{ \hat{P}_{i-1}^{\perp}u\ind{i}} ^{2} = \absinnerbig{\hat{P}_{i-1}^\perp u\ind{i}}{u\ind{i}} =1-\norm{ P_{i-1}u\ind{i}} ^{2} -
      \frac{ \left(\innerbig{u\ind{i}}{P_{i-1}^\perp x\ind{i}}\right)^{2}}{\norm{ P_{i-1}^{\perp}x\ind{i}} ^{2}}\ge1-\alpha^{2}\left(2i-1\right)\ge\frac{1}{2},
    \end{equation}
  \end{subequations}
  where the first equality uses $(P_{i-1}^\perp)^2 = P_{i-1}^\perp$,
  the second the definition of $\hat{P}_{i-1}$, and the inequality uses
  $\inner{u\ind{j}}{P_{i-1}^\perp x\ind{i}} \le \alpha \norms{P_{i-1}^\perp
    x\ind{i}}$ and $\norms{ P_{i-1}u\ind{j}}^{2} \le
  2\alpha^{2}\left(i-1\right)$.

  Combining the observations~\eqref{eqn:proj-ui-uj}
  and~\eqref{eqn:proj-u-u}, we can bound each summand in the second
  summation in~\eqref{eq:fullder-rand-slow-gs}. Since the summands in the
  first summation are bounded by $\alpha^2$ by definition~\eqref{eqn:G-def}
  of $G_i$, we obtain
  \begin{equation*}
    \normbig{P_{t-1}u\ind{j}}^{2}
    \le \sum_{i=1}^{t-1}\alpha^{2}
    +\sum_{i=1}^{t-1}\frac{\left(\alpha/2\right)^{2}}{1/2}
    =\alpha^{2}\left(t-1+\frac{t-1}{2}\right)\le2\alpha^{2}\left(t-1\right),
  \end{equation*}
  which completes the induction. 
\end{proof}

  \noindent
  By Lemma~\ref{lem:fullder-rand-slow-pu} the event $G_{\le T}$ implies our
  result, so using
   $\P(G_{\le T}^c) \le \sum_{t = 1}^\T \P(G_t^c \mid G_{< t})$,
  it suffices to show that
  \begin{equation}
    \P\left(G_{\le \T}^c\right)
    \le \sum_{t=1}^{\T} \P( G_t^c \mid G_{<t} ) \le \delta.
    \label{eqn:smallprob-Gs}
  \end{equation}
  Let us therefore consider $\P\left(G_{t}^{c} \mid G_{<t}\right)$.  By the
  union bound and fact that $\norm{ P_{t-1}^{\perp}u\ind{j}}
  \le1$ for every $t$ and $j$,
  \begin{align}
    \P (G_{t}^{c} \mid G_{<t})
    & \le 
    \sum_{j \in \{t,\ldots,T\}} \P\left(
    \absinnerbig{u\ind{j}}{\unitvec{P_{t-1}^\perp x\ind{t}}} > \alpha
    \mid G_{<t} \right) \nonumber \\ &
    = \sum_{j \in \{t,\ldots,T\}} \E_{\xi, U_{(<t)}} 
    \P\left( \absinnerbig{u\ind{j}}{\unitvec{P_{t-1}^\perp x\ind{t}}} > \alpha
    \mid \xi, U_{(<t)}, G_{<t} \right) \nonumber \\ &
    \le \sum_{j \in \{t,\ldots,T\}} \E_{\xi, U_{(<t)}} 
    \P\left( \absinnerbig{\unitvec{P_{t-1}^\perp u\ind{j}}} {\unitvec{P_{t-1}^\perp x\ind{t}}} > \alpha
    \mid \xi, U_{(<t)}, G_{<t} \right),
    \label{eq:fullder-rand-slow-Gc-ub}
  \end{align}
  where $U_{(<t)}$ is shorthand for $u\ind{1},\ldots,u\ind{t-1}$ and $\xi$
  is the random variable generating $x\ind{1},\ldots,x\ind{\T}$.
  
  In the following lemma, we state formally that conditioned on $G_{<i}$,
  the iterate $x\ind{i}$ depends on $U$ only through its first $(i-1)$ columns.

\begin{customlemma}{\ref{lem:fullder-rand-slow}b}
    \label{lem:fullder-rand-slow-deterministic}
  For every $i\le \T$, there exist measurable functions $\alg\ind{i}_+$ and $\alg\ind{i}_{-}$ such that
    \begin{equation}\label{eq:fullder-rand-slow-xi-det}
      x\ind{i}=\alg\ind{i}_+\left(\xi,U_{(<i)}\right)
      \indic{G_{<i}} +
      \alg\ind{i}_{-}\left(\xi,U\right)\indic{G_{<i}^{c}}.
    \end{equation}
\end{customlemma}

    \begin{proof}
    Since the iterates are informed by $\fhardRot$, we may write each one as
    (recall definition~\eqref{eq:prelims-randomized-alg})
    \begin{equation*}
      x\ind{i}=\alg\ind{i}\left(\xi, \deriv{(0,\ldots,p)}
      \fhardRot(x\ind{1}), \ldots,
      \deriv{(0,\ldots,p)}\fhardRot(x\ind{i-1})\right)
      = \alg\ind{i}_{-}\left(\xi,U\right),
    \end{equation*}
    for measurable functions $\alg\ind{i},\alg\ind{i}_{-}$, where we recall
    the shorthand $\deriv{(0,\ldots,p)} h(x)$ for the derivatives of $h$ at $x$
    to order $p$. Crucially, by Lemma~\ref{lem:fullder-rand-slow-pu},
    $G_{<i}$ implies $\absinner{u\ind{j}}{x\ind{s}} < \frac{1}{2}$ for every
    $s<i$ and every $j\ge s$. As $\fhard$ is a fixed robust zero-chain (Definition~\ref{def:robust-zero-chain}), for any $s<i$, the derivatives of
    $\fhardRot$ at $x\ind{s}$ can therefore be expressed as functions of
    $x\ind{s}$ and $u\ind{1},\ldots,u\ind{s-1}$, and---applying this
    argument recursively---we see that $x\ind{i}$ is of the
    form~\eqref{eq:fullder-rand-slow-xi-det} for every $i\le \T$.
  \end{proof}

  Consequently (as $G_{<t}$ implies $G_{<i}$ for every $i\le t$),
  conditioned on $\xi, U_{(<t)}$ and $G_{<t}$, the iterates $x\ind{1},
  \ldots, x\ind{t}$ are deterministic, and so is
  $P_{t-1}^{\perp}x\ind{t}$. If $P_{t-1}^{\perp}x\ind{t}=0$ then
  $G_t$ holds and $\P (G_{t}^{c}\mid G_{<t})=0$, so we may assume without
  loss of generality that $P_{t-1}^{\perp}x\ind{t}\neq0$. We may therefore
  regard $\unitvecsmall{P_{t-1}^\perp x\ind{t}}$
  in~\eqref{eq:fullder-rand-slow-Gc-ub} as a deterministic unit
  vector in the subspace $P_{t-1}^\perp$ projects to. We
  now characterize the
  conditional distribution of $\unitvecsmall{P_{t-1}^\perp u\ind{j}}$.

\begin{customlemma}{\ref{lem:fullder-rand-slow}c}
    \label{lem:fullder-rand-slow-uniform}
    Let $t\le \T$, and $j\in\{t, \ldots, \T\}$.  Then conditioned on $\xi,
    U_{(<t)}$ and $G_{<t}$, the vector $\unitvec{P_{t-1}^\perp u\ind{j}}$ is
    uniformly distributed on the unit sphere in the subspace to which
    $P_{t-1}^\perp$ projects.
\end{customlemma}

  \begin{proof}
  	This lemma is subtle. The vectors
  	$u\ind{j}$, $j\ge t$, conditioned on $U_{(<t)}$, are certainly uniformly
  	distributed on the unit sphere in the subspace orthogonal to
  	$U_{(<t)}$. However, the additional conditioning on $G_{<t}$ requires
  	careful handling. 
  Throughout the proof we fix $t\le \T$ and $j\in\{t,\ldots,\T\}$. We begin
  by noting that by~\eqref{eq:fullder-rand-slow-pu}, $G_{<t}$ implies
  \begin{equation*}
    \norm{P_{t-1}^\perp u\ind{j} }^2 
    = 1 - \norm{P_{t-1} u\ind{j} }^2
    \ge 1 - 2\alpha^2(t-1) > 0.
  \end{equation*}
  Therefore, when $G_{<t}$ holds we have $P_{t-1}^\perp u\ind{j} \neq 0$ so
  $\unitvecsmall{P_{t-1}^\perp u\ind{j}}$ is well-defined.
  
  To establish our result, we will show that the density of $U_{(\ge t)} =
  [u\ind{t}, \ldots, u\ind{\T}]$ conditioned on $\xi, U_{( < t)}, G_{<t}$ is
  invariant to rotations that preserve the span of
  $x\ind{1},u\ind{1},\ldots,x\ind{t-1},u\ind{t-1}$. More formally, let
  $p_{\ge t}$ denote the density of $U_{(\ge t)}$ conditional on $\xi,U_{(< t)}$ 
  and $G_{<t}$. We wish to show that
  \begin{equation}\label{eq:fullder-rand-slow-rot-inv}
    p_{\ge t} \left(U_{(\ge t)} \mid \xi,U_{(<t)},G_{<t}\right) = 
    p_{\ge t} \left(Z U_{(\ge t)} \mid \xi,U_{(<t)},G_{<t}\right)
  \end{equation}
  for every rotation $Z\in\R^{d\times d}$, $Z^{\top} Z = I_d$, 
  satisfying
  \begin{equation*}
    Zv=v=Z^{\top}v
    ~~ \mbox{for~all} ~~
    v\in \left\{ x\ind{1},u\ind{1},\ldots,x\ind{t-1},u\ind{t-1}\right\}.
  \end{equation*}
  Throughout, we let $Z$ denote such a rotation. 
  Letting $p_{\xi,U}$ and $p_{U}$ denote the densities of
  $(\xi, U)$ and $U$, respectively, we have
  \begin{equation*}
    p_{\ge t} \left(U_{(\ge t)} \mid \xi, U_{(<t)},G_{<t}\right) 
    = \frac{\P\left(G_{<t}  \mid  \xi,U\right)  p_{\xi, U}\left(\xi, U \right)}{\P\left(G_{<t}  \mid  \xi,U_{(<t)}\right)  p_{\xi,U_{(<t)}}\left( \xi,U_{(<t)} \right) } 
    = \frac{\P\left(G_{<t}  \mid  \xi,U\right) p_{U}\left(U\right)}
    {\P\left(G_{<t}  \mid  \xi,U_{(<t)}\right) p_{U_{(<t)}}\left(U_{(<t)}\right)}
  \end{equation*}
  where the first equality holds by the definition of conditional
  probability and second by the independence of $\xi$ and $U$.  We have $Z
  U_{(<t)} = U_{(<t)} $ and therefore, by the invariance of $U$ to
  rotations, $p_U([U_{(<t)}, Z U_{(\ge t)}]) = p_U(Z U) = p_U(U)$. Hence,
  replacing $U$ with $ZU$ in the above display yields
  \begin{equation*}
    p_{\ge t} \left( Z U_{(\ge t)} \mid \xi,U_{(<t)},G_{<t}\right)
    = \frac{\P\left(G_{<t}  \mid  \xi, Z U\right) p_{U}\left(U\right)}
    {\P\left(G_{<t}  \mid  \xi,U_{(<t)}\right) p_{U_{(<t)}}\left(U_{(<t)}\right)}.
  \end{equation*}
  Therefore if we prove $\P(G_{<t}\mid
  \xi,U) = \P(G_{<t} \mid \xi,Z U)$---as we proceed to do---then
  we can conclude the equality~\eqref{eq:fullder-rand-slow-rot-inv} holds.
  
  First, note that $\P\left(G_{<t}\mid\xi,U\right)$ is supported on
  $\{0,1\}$ for every $\xi,U$, as they completely determine
  $x\ind{1},\ldots,x\ind{T}$. It therefore suffices to show that $\P(G_{<t}
  \mid \xi,U)=1$ if and only if $\P\left(G_{<t} \mid \xi,Z U\right)=1$. Set
  $U'=Z U$, observing that ${u'}\ind{i}=Z u\ind{i}=u\ind{i}$ for any $i<t$,
  and let ${x'}\ind{1},\ldots,{x'}\ind{\T}$ be the sequence generated from
  $\xi$ and $U'$. We will prove by induction on $i$ that
  $\P(G_{<t}\mid\xi,U)=1$ implies
  $\P(G_{<i}\mid\xi,U')=1$ for every $i\le t$. The basis of the
  induction is trivial as $G_{<1}$ always holds. Suppose now that
  $\P(G_{<i}\mid\xi,U')=1$ for $i<t$, and therefore
  ${x'}\ind{1},\ldots,{x'}\ind{i}$ can be written as functions of
  $\xi$ and ${u'}\ind{1},\ldots,{u'}\ind{i-1}=u\ind{1},\ldots,u\ind{i-1}$ by
  Lemma~\ref{lem:fullder-rand-slow-deterministic}.  Consequently,
  ${x'}\ind{l}=x\ind{l}$ for any $l\le i$ and also
  $P_{i-1}'^{\perp}{x'}\ind{i}=P_{i-1}^{\perp}x\ind{i}$.  Therefore, for any
  $l\ge i$,
  \begin{equation*}
    \absinnerbig{{u'}\ind{l}}{\unitvec{P_{i-1}'^{\perp}{x'}\ind{i}}}
    \stackrel{(i)}{=} 
    \absinnerbig{u\ind{l}}{Z^{\top} \unitvec{P_{i-1}^{\perp}{x}\ind{i}}}
    \stackrel{(ii)}{=} 
    \absinnerbig{u\ind{l}}{\unitvec{P_{i-1}^{\perp}{x}\ind{i}}}
    \stackrel{(iii)}{\le}  \alpha ,
  \end{equation*}
  where in $(i)$ we substituted ${u'}\ind{l} = Z u\ind{l}$ and
  $P_{i-1}'^{\perp}{x'}\ind{i}=P_{i-1}^{\perp}x\ind{i}$, $(ii)$ is because
  $P_{i-1}^{\perp}x\ind{i}=x\ind{i}-P_{i-1}x\ind{i}$ is in the span of 
  \mathprog{vectors} 
  $\left\{ x\ind{1},u\ind{1},\ldots,x\ind{i-1},u\ind{i-1},x\ind{i}\right\}$
  and therefore not modified by $Z^{\top}$, and $(iii)$ is by our assumption
  that $G_{<t}$ holds, and so $G_{i}$ holds. Therefore
  $\P\left(G_{i}\mid\xi,U'\right)=1$ and
  $\P\left(G_{<i+1}\mid\xi,U'\right)=1$, concluding the induction.  An
  analogous argument shows that $\P\left(G_{<t}\mid\xi,U'\right)=1$ implies
  $\P\left(G_{<t}\mid\xi,U\right)=\P\left(G_{<t}\mid\xi,Z^{\top}U'\right)=1$
  and thus $\P\left(G_{<t}\mid\xi,U\right)=\P\left(G_{<t}\mid\xi,Z U\right)$
  as required.
  
  Marginalizing the density~\eqref{eq:fullder-rand-slow-rot-inv} to obtain a
  density for $u\ind{j}$ and recalling that $P_{t-1}^\perp$ is measurable
  $\xi, U_{( < t)}, G_{<t}$, we conclude that, conditioned on
  $\xi, U_{( < t)}, G_{<t}$ the random variable $\unitvec{P_{t-1}^\perp
    u\ind{j} }$ has the same density as $\unitvec{ P_{t-1}^\perp Z u\ind{j}
  }$. However, $P_{t-1}^\perp Z = Z P_{t-1}^\perp$ by assumption on $Z$, and
  therefore
  \begin{equation*}
    \unitvec{ P_{t-1}^\perp Z u\ind{j} } = Z \unitvec{ P_{t-1}^\perp u\ind{j} }.
  \end{equation*} 
  We conclude that the conditional distribution of the unit vector
  $\unitvec{ P_{t-1}^\perp u\ind{j} }$ is invariant to rotations in the
  subspace to which $P_{t-1}^\perp$ projects.
\end{proof}
  
  Summarizing the discussion above, the conditional probability
  in~\eqref{eq:fullder-rand-slow-Gc-ub} measures the inner product
  of two unit vectors in a subspace of $\R^d$ of dimension
  $d'=\tr\left(P_{t-1}^{\perp}\right)\ge d-2\left(t-1\right)$, with one of
  the vectors deterministic and the other uniformly distributed. We may
  write this as
  \begin{equation*}
    \P\left( \absinnerbig{\unitvec{P_{t-1}^\perp u\ind{j}}} {\unitvec{P_{t-1}^\perp x\ind{t}}} > \alpha
    \mid \xi, U_{(<t)}, G_{<t} \right) = \P( |v_1| > \alpha),
  \end{equation*}
  where $v$ is uniformly distributed on the unit sphere in $\R^{d'}$. By a
  standard concentration of measure bound on the
  sphere~\cite[Lecture~8]{Ball97},
  \begin{equation*}
    \P( |v_1| > \alpha) \le 2e^{-d'\alpha^2/2}
    \le 2e^{-\frac{\alpha^{2}}{2}\left(d-2t\right)}.
  \end{equation*}	
  Substituting this bound back into the
  probability~\eqref{eq:fullder-rand-slow-Gc-ub} gives
  \begin{equation*}
    \P\left(G_{t}^{c}\mid G_{<t}\right)\le 2\left(\T-t+1\right)e^{-\frac{\alpha^{2}}{2}\left(d-2t\right)}
    \le 2 \T e^{-\frac{\alpha^{2}}{2}\left(d-2\T\right)}.
  \end{equation*}
  Substituting this in turn into the bound~\eqref{eqn:smallprob-Gs}, we have
  $\P(G_{\le T}^c) \le \sum_{t = 1}^\T \P(G_t^c \mid G_{< t}) \le 2 \T^2
  e^{-\frac{\alpha^2}{2} (d - 2\T)}$.  Setting $d\ge52\T
  R^{2}\log\frac{2\T^{2}}{\delta}\ge\frac{2}{\alpha^{2}}\log\frac{2\T^{2}}{\delta}+2\T$
  establishes $\P(G_{\le \T}^c)\le \delta$, concluding
  Lemma~\ref{lem:fullder-rand-slow}. \qed

\subsection{Proof of Lemma~\ref{lem:fullder-rand-bounded-props}}\label{sec:fullder-rand-bounded-props-proof}

\lemFullderRandBoundedProps*

\newcommand{\derivtil}[1]{\wt{\nabla}^{{#1}}}

\begin{proof}
  Part~\ref{item:random-bounds-f-gap} holds because
  $\fhardBound\left(0\right)=\fhard\left(0\right)$ and
  $\fhardBound\left(x\right)\ge\fhardRot\left(\rho(x)\right)$ for every $x$, so
  \begin{equation*}
    \inf_{x\in\R^{d}}\fhardBound\left(x\right)
    \ge \inf_{x\in\R^{d}}\fhardRot\left(\rho(x)\right)
    = 
    \inf_{\norm{x} \le R} \fhard\left(x\right)
    \ge \inf_{x\in\R^{d}}\fhard\left(x\right),
  \end{equation*}
  and therefore by
  Lemma~\ref{lem:fullder-bounded}.\ref{item:fullder-funcbound}, we have
  $\fhardBound(0) - \inf_{x}\fhardBound(x) \le \fhard(0)-\inf_{x}\fhard(x)
  \le 12T$.

  Establishing part~\ref{item:random-bounds-lipschitz} requires
  substantially more work.  Since smoothness with respect to Euclidean
  distances is invariant under orthogonal transformations, we take $U$ to be the first $\T$ columns of the $d$-dimensional identity matrix, denoted $U=I_{d,\T}$. Recall the scaling
  $\rho(x) = R x / \sqrt{R^2 + \norm{x}^2}$ with ``radius'' $R = 230
  \sqrt{\T}$ and the definition
  $\fhardBound(x) = \fhard(U^{\top}\rho(x)) + \frac{1}{10} \norm{x}^2$.
  The quadratic $\frac{1}{10} \norm{x}^2$ term in $\fhardBound$ 
  has $\frac{1}{5}$-Lipschitz first derivative and $0$-Lipschitz higher
  order derivatives (as they are all constant or zero),
  and we take $U = I_{d,\T}$ without loss of generality,
  so we consider the function
  \begin{equation*}
    \fhardBoundI(x) \defeq
    \fhard(\rho(x)) =
    \fhard\left(\left[\rho\left(x\right)\right]_{1},
    \ldots, \left[\rho\left(x\right)\right]_{\T}\right).
  \end{equation*}

  We now compute the partial derivatives of $\fhardBoundI$. 
  Defining $y = \rho(x)$, let $\derivtil{k}_{j_1, ..., j_k} \defeq \frac{\del^k}{\del y_{j_1}\cdots \del y_{j_k}}$ denote derivatives with respect to $y$. In addition,  
  define $\mc{P}_k$ to be the
  set of all partitions of $[k] = \{1, \ldots, k\}$, \ie $(S_1, \ldots,
  S_L) \in \mc{P}_k$ if and only if the $S_i$ are disjoint and $\cup_l S_l =
  [k]$.  Using the chain rule, we have for any $k$ and set of indices
  $i_1, \ldots, i_k \le T$ that
  \begin{equation}
    \label{eqn:crazy-partial-derivatives}
    \deriv{k}_{i_1, ..., i_k}
    \fhardBoundI(x) =
    \sum_{(S_1, \ldots, S_L) \in \mc{P}_k}
    \sum_{j_1, ..., j_L =1}^\T
    \bigg(\prod_{l = 1}^L
    \deriv{|S_l|}_{i_{{S_l}}} \rho_{j_l}(x)\bigg)
    \derivtil{L}_{j_1, ..., j_L} \fhard(y),
    ~~ y = \rho(x),
  \end{equation}
  where we have used the shorthand $\deriv{|S|}_{i_{S}}$ to denote
  the partial derivatives with respect to each of $x_{i_j}$ for $j \in S$.
  We use the equality~\eqref{eqn:crazy-partial-derivatives} to
  argue that (recall the identity~\eqref{eq:prelims-eckhart-young-tensors})
  \begin{equation*}
    \opnorm{\deriv{p+1} \fhardBoundI(x)}
    = \sup_{\norm{v} = 1}
    \inner{\deriv{p+1} \fhardBoundI(x)}{v^{\otimes (p+1)}} \defeq \smChat{p} - 
    \frac{1}{5}\indic{p=1} \le e^{c p\log p + c},
  \end{equation*}
  for some numerical constant\footnote{ 
  	To simplify notation we allow $c$ to change from equation to equation throughout the proof, always representing a finite numerical constant independent of $d$, $\T$, $k$ or $p$.}, 
   $0< c < \infty$ and every $p\ge1$. As explained in Section~\ref{sec:prelims-funcs}, this implies $\fhardBound$ has $e^{c p\log p + c}$-Lipschitz $p$th order derivative, giving part \ref{item:random-bounds-lipschitz} of the lemma.

  To do this, we begin by considering the partitioned
  sum~\eqref{eqn:crazy-partial-derivatives}. Let $v \in \R^d$ be an
  arbitrary direction with $\norm{v} = 1$. Then for $j \in [d]$ and $k
  \in \N$ we define the quantity
  \begin{equation*}
    \wt{v}_j^k
    = \wt{v}_j^k(x) \defeq \<\deriv{k} \rho_j(x), v^{\otimes k}\>,
  \end{equation*}
  algebraic manipulations and rearrangement of the
  sum~\eqref{eqn:crazy-partial-derivatives} yield
  \begin{align*}
    \<\deriv{k} \fhardBoundI(x), v^{\otimes k}\>
    & = \sum_{(S_1, \ldots, S_L) \in \mc{P}_k}
    \sum_{i_1, \ldots, i_k = 1}^d
    v_{i_1} v_{i_2} \cdots v_{i_k}
    \sum_{j_1, ..., j_L =1}^\T
    \bigg(\prod_{l = 1}^L
    \deriv{|S_l|}_{i_{{S_l}}} \rho_{j_l}(x)\bigg)
    \derivtil{L}_{j_1, ..., j_L} \fhard(y) \\
    & = \sum_{(S_1, \ldots, S_L) \in \mc{P}_k}
    \sum_{j_1, ..., j_L =1}^\T
    \wt{v}_{j_1}^{|S_1|}
    \cdots \wt{v}_{j_L}^{|S_L|}
    \derivtil{L}_{j_1, ..., j_L} \fhard(y) \\
    & = \sum_{(S_1, \ldots, S_L) \in \mc{P}_k}
    \innerbig{\derivtil{L} \fhard(y)}{\wt{v}^{|S_1|}
    \otimes \cdots \otimes \wt{v}^{|S_L|}}.
  \end{align*}
  We claim that there exists a numerical constant
  $c < \infty$ such that for all $k \in \N$,
  \begin{equation}
    \label{eqn:bound-tilde-v}
    \sup_x \norms{\wt{v}^k(x)} \le
    \exp(c k \log k + c) R^{1 - k}.
  \end{equation}
  Before proving inequality~\eqref{eqn:bound-tilde-v}, we
  show how it implies the desired lemma.
  By the preceding display, we have
  \begin{equation*}
    \absinner{\deriv{p+1}\fhardBoundI(x)}{v^{\otimes (p+1)}}
    \le 
    \sum_{(S_1, \ldots, S_L) \in \mc{P}_{p+1}}
    \opnorm{\derivtil{L} \fhard(y)}
    \prod_{l = 1}^L \norms{\wt{v}^{|S_l|}}.
  \end{equation*}
  Lemma~\ref{lem:fullder-bounded} shows that there
  exists a numerical constant $c < \infty$ such that
 \begin{equation*}
 	 \opnorm{\nabla^{(L)}
    \fhard(y)} \le \smC{L-1} \le \exp(c L \log L + c)~\mbox{for all }L \ge 2.
 \end{equation*}
  When the number of partitions $L = 1$, we 
  have $|S_1| = p+1 \ge 2$, and so
  Lemma~\ref{lem:fullder-bounded}.\ref{item:fullder-gradbound}
  yields
  \begin{equation*}
    \opnorm{\grad \fhard(y)}
    \norms{\wt{v}^{|S_1|}}
    = \norm{\grad \fhard(y)}
    \norms{\wt{v}^{|S_1|}}
    \le 23 \sqrt{T} \cdot R^{-p} \exp(c p \log p + c)
    \le \exp(c p \log p + c),
  \end{equation*}
  where we have used $R = 230 \sqrt{\T}$.
  Using $|S_1| + \cdots + |S_L| = p+1$ and the fact that $q(x) = (x+1)\log(x+1)$ satisfies $q(x)+q(y) \le q(x+y)$ for every $x,y>0$, we have
  \begin{equation*}
    \opnorm{\derivtil{L} \fhard(y)}
   \prod_{l = 1}^L \norms{\wt{v}^{|S_l|}} \le \exp(c p \log p + c)
  \end{equation*}
  for some $c < \infty$ and every $(S_1, \ldots, S_L) \in \mc{P}_{p+1}$. Bounds on Bell numbers~\cite[Thm.~2.1]{BerendTa10} give that there are at
  most $\exp(k \log k)$ partitions in $\mc{P}_k$, which combined with the bound above gives desired result.

  Let us return to the derivation of inequality~\eqref{eqn:bound-tilde-v}.
  We begin by recalling Fa\`{a} di Bruno's formula for the chain rule. Let
  $f, g : \R \to \R$ be appropriately smooth functions. Then
  \begin{equation}
    \label{eqn:faa-di-bruno}
    \frac{d^k}{dt^k}
    f(g(t))
    = \sum_{P \in \mc{P}_k}
    f^{(|P|)}(g(t)) \cdot \prod_{S \in P} g^{(|S|)}(t),
  \end{equation}
  where $|P|$ denotes the number of disjoint elements of partition $P \in
  \mc{P}_k$.   
  Define the function $\wb{\rho}(\xi) = \xi / \sqrt{1 + \norms{\xi}^2}$,
  and let $\lambda(\xi) = \sqrt{1 + \norms{\xi}^2}$ so that $\wb{\rho}(\xi)
  = \nabla \lambda(\xi)$ and $\rho(\xi) = R \wb{\rho}(\xi / R)$.
  Let $\wb{v}_j^k(\xi) = \<\deriv{k} \wb{\rho}_j(\xi), v^{\otimes k}\>$,
  so that
  \begin{equation}
    \wb{v}^k(\xi)
    = \nabla
    \<\deriv{k} \lambda(\xi), v^{\otimes k}\>
    ~~ \mbox{and} ~~
    \wt{v}^k = R^{1 - k} \wb{v}^k(x / R).
    \label{eqn:definitions-of-vs}
  \end{equation}
  With this in mind, we consider the quantity
  $\<\deriv{k} \lambda(\xi), v^{\otimes k}\>$. Defining
  temporarily the functions $\alpha(r) = \sqrt{1 + 2r}$ and
  $\beta(t) = \half \norms{\xi + t v}^2$, and their
  composition $h(t) = \alpha(\beta(t))$, we evidently have
  \begin{equation*}
    h^{(k)}(0) = \<\deriv{k} \lambda(\xi), v^{\otimes k}\>
    = \sum_{P \in \mc{P}_k}
    \alpha^{(|P|)}(\beta(0))
    \cdot \prod_{S \in P} \beta^{(|S|)}(0),
  \end{equation*}
  where the second equality used Fa\'{a} di Bruno's
  formula~\eqref{eqn:faa-di-bruno}.
  Now, we note the following immediate facts:
  \begin{equation*}
	\alpha^{(l)}(r)
	= (-1)^l \frac{(2l - 1)!!}{(1 + 2r)^{l - 1/2}}
	~~ \mbox{and} ~~
    \beta^{(l)}(t) = \begin{cases}
    	\inner{v}{\xi} + t \norm{v}^2 & l = 1 \\
    	 \norm{v}^2 & l = 2 \\ 
    	 0 & l > 2.
    \end{cases}
  \end{equation*}
  Thus, if we let $\mc{P}_{k,2}$ denote the partitions of
  $[k]$ consisting only of subsets with one or two elements, we have
  \begin{equation*}
    h^{(k)}(0)
    = \sum_{P \in \mc{P}_{k,2}}
    (-1)^{|P|} \frac{(2|P| - 1)!!}{(1 + \norm{\xi}^2)^{|P| - 1/2}}
    \<\xi, v\>^{\countset_1(P)}
    \norm{v}^{2\countset_2(P)}
  \end{equation*}
  where $\countset_i(P)$ denotes the number of sets in $P$ with
  precisely $i$ elements. Noting that $\norm{v} = 1$, we may
  rewrite this as
  \begin{equation*}
    \<\deriv{k} \lambda(\xi), v^{\otimes k}\>
    = \sum_{l = 1}^k
    \sum_{P \in \mc{P}_{k,2},
      \countset_1(P) = l}
    (-1)^{|P|}
    \frac{(2|P| - 1)!!}{(1 + \norm{\xi}^2)^{|P| - 1/2}} \<\xi, v\>^l.
  \end{equation*}
  Taking derivatives we obtain
  \begin{equation*}
    \wt{v}^k
    = \nabla
    \<\deriv{k} \lambda(\xi), v^{\otimes k}\>
    = \bigg(\sum_{l = 1}^k
    a_l(\xi) \<\xi, v\>^{l - 1}\bigg) v
    + \bigg(\sum_{l = 1}^k b_l(\xi) \<\xi, v\>^l \bigg) \xi
  \end{equation*}
  where
  \begin{equation*}
    a_l(\xi) = l \cdot \sum_{P \in \mc{P}_{k,2},
      \countset_1(P) = l}
    \frac{(-1)^{|P|} (2|P| - 1)!!}{(1 + \norm{\xi}^2)^{|P| - 1/2}}
    ~~ \mbox{and} ~~
    b_l(\xi) =
    \sum_{P \in \mc{P}_{k,2}, \countset_1(P) = l}
    \frac{(-1)^{|P| + 1}
      (2|P| + 1)!!}{(1 + \norm{\xi}^2)^{|P| + 1/2}}.
  \end{equation*}
  We would like to bound $a_l(\xi) \<\xi, v\>^{l-1}$ and $b_l(\xi) \<\xi,
  v\>^l \xi$.  Note that $|P| \ge \countset_1(P)$ for every $P\in\mc{P}_{k}$, so $|P|\ge l$ in the sums above. Moreover, bounds for Bell
  numbers~\cite[Thm.~2.1]{BerendTa10} show that there are at most $\exp(k
  \log k)$ partitions of $[k]$, and $(2k - 1)!! \le \exp(k \log k)$ as well.
  As a consequence, we obtain
  \begin{equation*}
    \sup_\xi
    |a_l(\xi) \<\xi, v\>^{l - 1}|
    \le \exp(c l \log l)
    \sup_\xi \frac{|\<\xi, v\>|^{l - 1}}{
      (1 + \norm{\xi}^2)^{(l - 1)/2}}
    < \exp(c l \log l),
  \end{equation*}
  where we have used $\absinner{\xi}{v}\le \norm{\xi}$ due to $\norm{v}=1$. We similarly bound 
  $\sup_\xi |b_l(\xi)| |\<\xi, v\>|^l \norm{\xi}$.
  Returning to expression~\eqref{eqn:definitions-of-vs}, we have
  \begin{equation*}
    \sup_x \norms{\wt{v}^k(x)}
    \le \exp\left(c k \log k + c\right)
    R^{1 - k},
  \end{equation*}
  for a numerical constant $c<\infty$.
  This is the desired bound~\eqref{eqn:bound-tilde-v}, completing the proof.
\end{proof}

\section{Proof of Theorem~\ref{thm:fullder-final-dist}}
\label{sec:proof-general-distance}

\thmFullderDistLB*

We divide the proof of the theorem into two parts, as in our previous
results, first providing a few building blocks, then giving the
theorem.  The basic idea is to introduce a negative ``bump'' that is
challenging to find, but which is close to the origin. %

To make this precise, 
let $e\ind{j}$ denote the $j$th standard basis
vector. Then we define the bump function $\hhard:\R^\T \to \R$
by
\begin{flalign}
  \label{eq:fullder-distance-hhard-def}
  \hhard(x) &
  = \compactfunc\left( 1 - \frac{25}{2}\norm{x - \frac{4}{5}e\ind{\T}}^2\right) 
  = 
  \begin{cases}
    0 & \norm{x -\frac{4}{5}e\ind{\T}} \ge \frac{1}{5}\\
    \exp\left( 1-\frac{1}{
      \left(1-25\norm{x -\frac{4}{5}e\ind{\T}}^2\right)^{2}}\right)
    & \mbox{otherwise.}
  \end{cases}
\end{flalign}
As Figure~\ref{fig:fullder-distance} shows, $\hhard$ features a unit-height
peak centered at $\frac{4}{5} e\ind{\T}$, and it is identically zero when the
distance from that peak exceeds $\frac{1}{5}$. The volume of the peak vanishes
exponentially with $\T$, making it hard to find by querying
$\hhard$ locally. We list the properties of $\hhard$ necessary for our
analysis.

\begin{figure}
	\centering
	\includegraphics[width=0.75\textwidth]{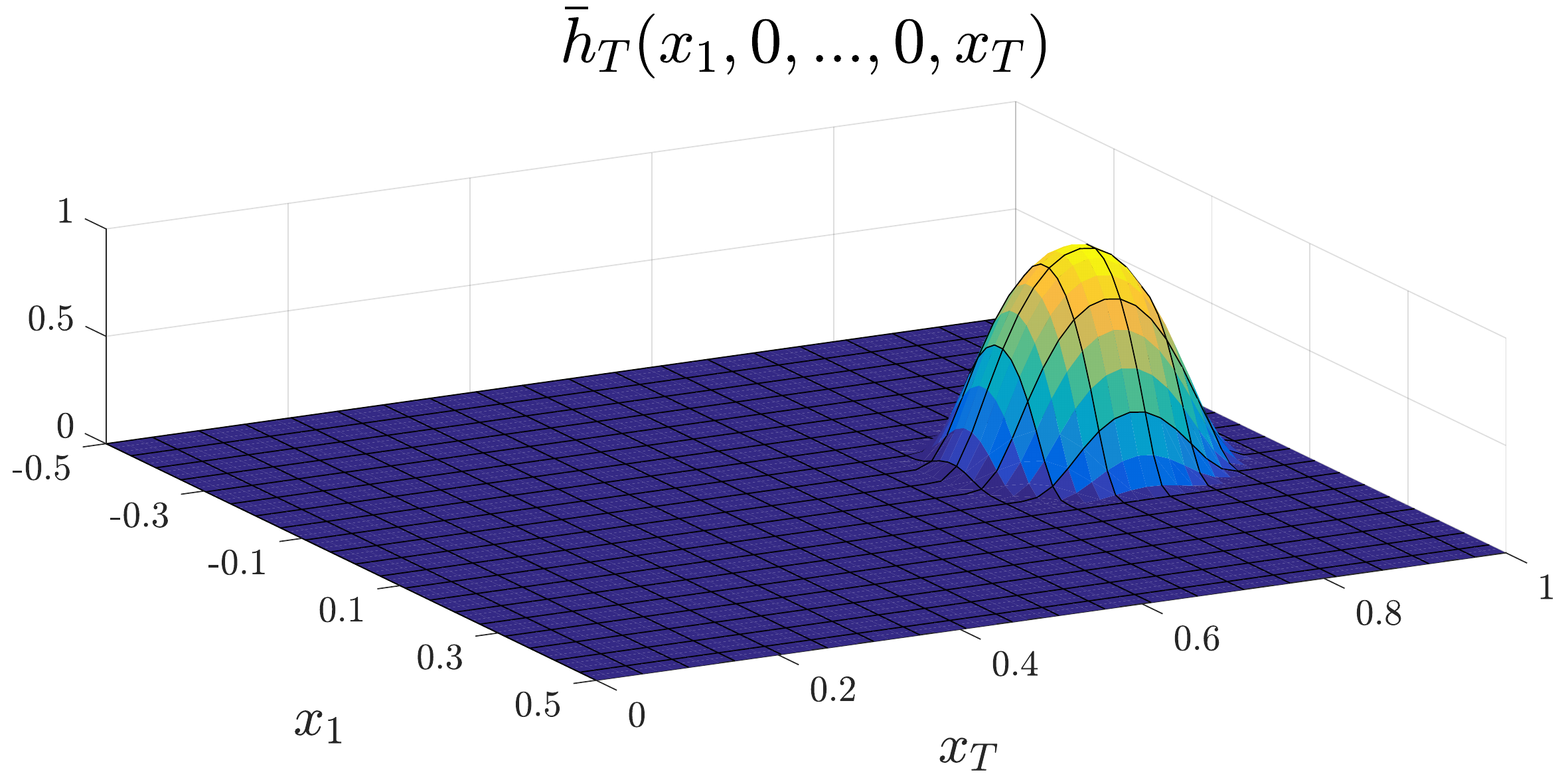} 
	\caption{Two-dimensional cross-section of the bump function $\hhard$.}\label{fig:fullder-distance}
\end{figure}

\begin{restatable}{lemma}{lemFullderHhardProps}\label{lem:fullder-hhard-props}
  The function $\hhard$ satisfies the following.
  \begin{enumerate}[i.]
  \item \label{item:fullder-hhard-bump} $\hhard\left(0.8 e\ind{\T}\right) =
    1$ and $\hhard(x)\in[0,1]$ for all $x\in\R^\T$.
  \item \label{item:fullder-hhard-radius} 
    $\hhard(x)=0$ on the set $\{x\in\R^d\,|\,x_{\T} \le \frac{3}{5} 
    \textnormal{ or }\norm{x}\ge 1\}$.
  \item \label{item:fullder-hhard-lipschitz} For $p \ge 1$, the $p$th
    order derivatives of $\hhard$ are $\smCvar{p}$-Lipschitz continuous,
    where $\smCvar{p} < e^{3 p\log p + c p}$ for some numerical constant $c 
    <
    \infty$.
  \end{enumerate}
\end{restatable}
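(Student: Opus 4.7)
\medskip

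\noindent\textbf{Proof plan.} Parts~\ref{item:fullder-hhard-bump} and~\ref{item:fullder-hhard-radius} reduce to direct computation. For~\ref{item:fullder-hhard-bump}, evaluating at $x=\tfrac{4}{5}e\ind{T}$ gives $\hhard(\tfrac{4}{5}e\ind{T})=\compactfunc(1)=\exp(1-1/(2\cdot 1-1)^2)=1$. The argument $1-\tfrac{25}{2}\|x-\tfrac{4}{5}e\ind{T}\|^2$ is always at most $1$; since $\compactfunc$ is nonnegative, nondecreasing, and satisfies $\compactfunc(1)=1$ (from Lemma~\ref{lem:fullder-props}.\ref{item:fullder-psiphi-props-zero},~\ref{item:fullder-psiphi-props-bounded}), the value of $\hhard$ lies in $[0,1]$. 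For~\ref{item:fullder-hhard-radius}, if $x_T\le \tfrac{3}{5}$ then $\|x-\tfrac{4}{5}e\ind{T}\|\ge \tfrac{1}{5}$, and if $\|x\|\ge 1$ then $\|x-\tfrac{4}{5}e\ind{T}\|\ge \|x\|-\tfrac{4}{5}\ge \tfrac{1}{5}$; either way $\tfrac{25}{2}\|x-\tfrac{4}{5}e\ind{T}\|^2\ge \tfrac{1}{2}$, so the argument to $\compactfunc$ is $\le \tfrac{1}{2}$ and $\hhard(x)=0$ by Lemma~\ref{lem:fullder-props}.\ref{item:fullder-psiphi-props-zero}.

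\medskip

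\noindent The bulk of the work is part~\ref{item:fullder-hhard-lipschitz}. I write $\hhard(x)=\compactfunc(g(x))$ with $g(x) = 1-\tfrac{25}{2}\|x-\tfrac{4}{5}e\ind{T}\|^2$, exploiting the crucial fact that $g$ is a \emph{quadratic}: $\grad g(x)=-25(x-\tfrac{4}{5}e\ind{T})$, $\hess g = -25 I$, and $\deriv{k}g \equiv 0$ for $k\ge 3$. To bound $\opnorm{\deriv{p+1}\hhard(x)}$, I fix a unit vector $v$ and consider the univariate function $t\mapsto \compactfunc(q_x(t))$ where $q_x(t)\defeq g(x+tv) = g(x)+t\<\grad g(x),v\> - \tfrac{25}{2}t^2$ is a quadratic polynomial in $t$, so $q_x^{(1)}(0)=\<\grad g(x),v\>$, $q_x^{(2)}(0)=-25$, and $q_x^{(k)}(0)=0$ for $k\ge 3$. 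By Fa\`a di Bruno's formula (as recalled in~\eqref{eqn:faa-di-bruno}),
\begin{equation*}
\inner{\deriv{p+1}\hhard(x)}{v^{\otimes(p+1)}} = \sum_{P\in\mc{P}_{p+1,2}} \compactfunc^{(|P|)}(g(x))\prod_{S\in P}q_x^{(|S|)}(0),
\end{equation*}
where $\mc{P}_{p+1,2}\subset\mc{P}_{p+1}$ restricts to partitions whose blocks all have size $1$ or $2$ (the only ones contributing, because $q_x^{(k)}(0)=0$ for $k\ge 3$).

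\medskip

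\noindent It remains to bound each ingredient. Outside the ball $\|x-\tfrac{4}{5}e\ind{T}\|\le\tfrac{1}{5}$ the function $\hhard$ vanishes identically, so I may assume $x$ lies in this ball, giving $|\<\grad g(x),v\>|\le 25\cdot \tfrac{1}{5}=5$, while $|q_x^{(2)}(0)|=25$. For a partition with $c_1$ singletons and $c_2$ doubletons, $c_1+2c_2=p+1$, so
\begin{equation*}
\prod_{S\in P}|q_x^{(|S|)}(0)| \le 5^{c_1}\cdot 25^{c_2} = 5^{c_1+2c_2} = 5^{p+1}.
\end{equation*}
Lemma~\ref{lem:fullder-props}.\ref{item:fullder-psiphi-props-infinite} supplies $|\compactfunc^{(k)}|\le \exp(\tfrac{5}{2}k\log(4k))$. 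The number of partitions in $\mc{P}_{p+1,2}$ equals the number of involutions on $[p+1]$, which by the classical Chowla-Herstein-Moore estimate~\cite{ChowlaHeMo51} is at most $\exp(\tfrac{1}{2}(p+1)\log(p+1)+c(p+1))$ for a numerical constant $c$. Combining these three bounds yields
\begin{equation*}
\opnorm{\deriv{p+1}\hhard(x)} \le \exp\!\left(\tfrac{1}{2}(p+1)\log(p+1)+\tfrac{5}{2}(p+1)\log(4(p+1))+(p+1)\log 5+c(p+1)\right)\le e^{3p\log p+c'p}
\end{equation*}
for a possibly larger numerical constant $c'$, which is the claimed Lipschitz bound since $\opnorm{\deriv{p+1}\hhard}\le \smCvar{p}$ is equivalent to $\smCvar{p}$-Lipschitzness of $\deriv{p}\hhard$. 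The main subtlety is the combinatorial accounting: using the Bell number as in the proof of Lemma~\ref{lem:fullder-rand-bounded-props} would give $3.5p\log p$, which falls short; it is essential to exploit that $g$ is quadratic and replace the Bell bound by the sharper involution count to land at $3p\log p$.
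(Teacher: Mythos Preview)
Your proof is correct and follows essentially the same route as the paper's. Both arguments handle parts~\ref{item:fullder-hhard-bump} and~\ref{item:fullder-hhard-radius} by inspection, and for part~\ref{item:fullder-hhard-lipschitz} apply Fa\`a di Bruno to the composition of $\compactfunc$ with a quadratic, restrict to partitions with blocks of size $1$ or $2$, and combine the involution count $|\mc{P}_{k,2}|\le e^{\frac{k}{2}\log k}$ with the derivative bound on $\compactfunc$ from Lemma~\ref{lem:fullder-props}.\ref{item:fullder-psiphi-props-infinite}. The only cosmetic difference is that the paper first translates and rescales to work with $\compactfunc(1-\tfrac{1}{2}\|x\|^2)$, which makes the product over blocks bounded by $1$ rather than $5^{p+1}$; this factor is absorbed into the $cp$ term either way, and your observation that the involution bound (not the Bell-number bound) is needed to hit the $3p\log p$ constant is exactly the point.
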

\noindent
We prove the lemma in Section~\ref{sec:fullder-hhard-props-proof}; the proof
is similar to that of Lemma~\ref{lem:fullder-rand-bounded-props}. With these
properties in hand, we can prove Theorem~\ref{thm:fullder-final-dist}.

\subsection{Proof of Lemma~\ref{lem:fullder-hhard-props}}
\label{sec:fullder-hhard-props-proof}

Properties~\ref{item:fullder-hhard-bump} and~\ref{item:fullder-hhard-radius}
are evident from the definition~\eqref{eq:fullder-distance-hhard-def} of
$\hhard$. To show property~\ref{item:fullder-hhard-lipschitz}, consider
$\hhardVar(x) =
\hhard(\frac{x+0.8e\ind{T}}{5})=\compactfunc(1-\half\|x\|^2)$, which is a
translation and scaling of $\hhard$, so if we show $\hhardVar$ has
$(\smCvar{p}/5^{p+1})$-Lipschitz $p$th order derivatives, for every $p\ge1$,
we obtain the required results. For any $x,v\in\R^\T$ with $\norm{v}\le1$ we
define the directional projection $\hhardVar_{x,v}(t) = \hhardVar(x+t\cdot
v)$. The required smoothness bound is equivalent to
\begin{equation*}
  \left| \hhardVar_{x,v}^{(p+1)}(0) \right| \le \smCvar{p}/5^{p+1} \le e^{c p\log p + c}
\end{equation*} 
for every $x,v\in\R^d$ with $\norm{v}\le1$, every $p\ge1$ and some numerical
constant $c<\infty$ (which we allow to change from equation to equation,
caring only that it is finite and independent of $\T$ and $p$).

As in the proof of Lemma~\ref{lem:fullder-rand-bounded-props}, we write
$\hhardVar_{x,v}(t) = \compactfunc(\beta(t))$ where $\beta(t) =
1-\half\norm{x+tv}^2$, and use Fa\'{a} di Bruno's
formula~\eqref{eqn:faa-di-bruno} to write, for any $k\ge 1$,
\begin{equation*}
  \hhardVar_{x,v}^{(k)}(0) 
  = \sum_{P \in \mc{P}_k}
  \compactfunc^{(|P|)}(\beta(0))
  \cdot \prod_{S \in P} \beta^{(|S|)}(0),
\end{equation*}
where $\mc{P}_k$ is the set of partitions of $[k]$ and $|P|$ denotes the
number of set in partition $P$. Noting that $\beta'(0) = -\inner{x}{v}$,
$\beta''(0) = -\norm{v}^2$ and $\beta^{(n)}(0)=0$ for any $n>2$, we have
\begin{equation*}
  \hhardVar_{x,v}^{(k)}(0) 
  = \sum_{P \in \mc{P}_{k,2}}
  (-1)^{|P|} \compactfunc^{(|P|)}\left(1-\half\norm{x}^2\right)
  \inner{x}{v}^{\countset_1(P)}
  \norm{v}^{2\countset_2(P)}
\end{equation*}
where $\mc{P}_{k,2}$ denote the partitions of $[k]$ consisting only of
subsets with one or two elements and $\countset_i(P)$ denotes the number of
sets in $P$ with precisely $i$ elements.

Noting that $\compactfunc^{(k)}(1-\half\norm{x}^2) = 0$ for any
$k\ge 0$ and $\norm{x} > 1$, we may assume $\norm{x}\le 1$. Since
$\norm{v}\le 1$, we may bound $| \hhardVar_{x,v}^{(p+1)}(0)|$ by
\begin{equation*}
  \left|\hhardVar_{x,v}^{(p+1)}(0) \right|
  \le \left|  \mc{P}_{p+1,2} \right| \cdot
  \max_{k\in[p+1]}\sup_{x\in\R} | \compactfunc^{(k)}(x)| 
  \stackrel{(i)}{\le} e^{\frac{p+1}{2}\log(p+1)}
  \cdot e^{\frac{5(p+1)}{2}\log(\frac{5}{2}(p+1))}
  \le e^{3 p \log p + c p}
\end{equation*}
for some absolute constant $c<\infty$, where inequality $(i)$ follows from
Lemma~\ref{lem:fullder-props}.\ref{item:fullder-psiphi-props-bounded} and
that the number of matchings in the complete graph (or the $k$th telephone
number~\cite[Lem.~2]{ChowlaHeMo51}) has bound $|\mc{P}_{k,2}| \le
e^{\frac{k}{2}\log k}$. This gives the result.

\subsection{Proof of Theorem~\ref{thm:fullder-final-dist}}
\label{sec:fullder-proof-final-dist}

For some $\T\in \N$ and $\sigma > 0$ to be specified, and
$d=\ceil{52\cdot230^2 \cdot T^2 \log(4 \T^2)}$, consider the function $f_U 
:\R^d
\to \R$ indexed by orthogonal matrix $U\in \R^{d\times \T}$ and defined as
\begin{equation*}
  f_U(x) = \frac{\Smp\sigma^{p+1}}{\smC{p}'}\fhardBound(x/\sigma) -  
  \frac{\Smp D^{p+1}}{\smC{p}'}\hhard(U^{\top} x/D),
\end{equation*}
where $\fhardBound(x) = \fhardRot(\rho(x)) + \frac{1}{10} \norm{x}^2$ is the 
randomized hard instance construction~\eqref{eq:fullder-fhardBound-def} with
$\rho(x) = x / \sqrt{1 + \norm{x/R}^2}$, $\hhard$ is the bump
function~\eqref{eq:fullder-distance-hhard-def} and $\smC{p}' = \smChat{p} +
\smCvar{p}$, for $\smChat{p}$ and $\smCvar{p}$ as in
Lemmas~\ref{lem:fullder-rand-bounded-props}.\ref{item:random-bounds-lipschitz}
and \ref{lem:fullder-hhard-props}.\ref{item:fullder-hhard-lipschitz},
respectively. By the lemmas, $f_U$ has $\Smp$-Lipschitz $p$th order
derivatives and $\smC{p}' \le e^{c_1 p\log p + c_1}$ for some $c_1<\infty$. 
We assume that $\sigma \le D$;
our subsequent choice of $\sigma$ will obey this constraint. 

Following our general proof strategy, we first demonstrate that $f_U \in
\FclassD{p}$, for which all we need do is guarantee that the global
minimizers of $f_U$ have norm at most $D$. By the
constructions~\eqref{eq:fullder-fhardBound-def}
and~\eqref{eq:fullder-fhard-def} of $\fhardBound$ and $\fhardRot$,
Lemma~\ref{lem:fullder-hhard-props}.\ref{item:fullder-hhard-bump} implies
\begin{flalign*}
  f_U\left((0.8D)u\ind{\T}\right)
  &  = \frac{\Smp\sigma^{p+1}}{\smC{p}'}\fhard(\rho(e\ind{\T}))
  +  \frac{\Smp\sigma^{p+1}}{10\smC{p}'}\norm{\frac{4Du\ind{\T}}{5\sigma}}^2
  - \frac{\Smp D^{p+1}}{\smC{p}'}\hhard(0.8 e\ind{\T}) \\
  & = \frac{\Smp \sigma^{p + 1}}{\smC{p}'} \fhard(0)
  + \frac{8 \Sm{p} \sigma^{p - 1} D^2}{125 \smC{p}'}
  + \frac{-\Sm{p} D^{p + 1}}{\smC{p}'}
  < -\frac{117}{125} \frac{\Sm{p} D^{p + 1}}{\smC{p}'}
  + \frac{\Smp \sigma^{p + 1}}{\smC{p}'} \fhard(0)
\end{flalign*}
with the final inequality using our assumption $\sigma \le D$. On the other
hand, for any $x$ such that $\hhard(U^{\top} x/D) = 0$, we have by
Lemma~\ref{lem:fullder-rand-bounded-props}.\ref{item:random-bounds-f-gap}
(along with $\fhardBound(0)=0)$ that
\begin{equation*}
  f_U(x) \ge \frac{\Smp\sigma^{p+1}}{\smC{p}'} \inf_x \fhardBound(x) \ge 
  - 12\frac{\Smp\sigma^{p+1}}{\smC{p}'}T
  + \frac{\Smp \sigma^{p + 1}}{\smC{p}'} \fhard(0).
\end{equation*}
Combining the two displays above, we conclude that if
\begin{equation*}
  12 \frac{\Smp \sigma^{p + 1}}{\smC{p}'} \T
  \le \frac{117}{125}\frac{\Smp D^{p + 1}}{\smC{p}'},
\end{equation*}
then all global minima $x\opt$ of $f_U$ must satisfy $\hhard(U^{\top} x\opt / 
D)
> 0$.  Inspecting the definition~\eqref{eq:fullder-distance-def} of
$\hhard$, this implies $\norm{x^\star/D - 0.8u\ind{T}} < \frac{1}{5}$, and
therefore $\norm{x^\star} \le D$. Thus, by setting
\begin{equation}
  \T = \floor{\frac{D^{p+1}}{13\sigma^{p+1}}},
  \label{eqn:T-choice-to-distance}
\end{equation}
we guarantee that $f_U\in \FclassD{p}$ as long as $\sigma \le D$.

It remains to show that, for an appropriately chosen $\sigma$, any 
randomized algorithm requires (with high probability) more than $\T$ 
iterations 
to find $x$ such that $\norms{\grad f_U(x)} <
\epsilon$. We claim that when $\sigma\le D$, for any
$x\in \R^d$,
\begin{equation}
  \label{eq:fullder-distance-rhoequiv}
  \absinner{u\ind{\T}}{\rho (x/\sigma)} < \half
  ~~\mbox{implies} ~~
  \hhard(U^{\top} y/D) = 0
  ~~ \mbox{for}~ y ~ \mbox{in a neighborhood of } x.
\end{equation}
We defer the proof of claim~\eqref{eq:fullder-distance-rhoequiv} to the end
of this section.

Now, let $U \in \R^{d \times \T}$ be an orthogonal matrix chosen uniformly
at random from $\orthogonalgroup(d,T)$.
Let $x\ind{1},\ldots, x\ind{t}$ be a sequence of iterates generated by
algorithm $\alg\in\AlgRand$ applied on $f_U$. We argue that $|\<u\ind{T},
\rho(x\ind{t} / \sigma)\>| < 1/2$ for all $t \le \T$, with high probability. To do 
so, we briefly revisit the proof of Lemma~\ref{lem:fullder-rand-slow} 
(Sec.~\ref{sec:fullder-rand-slow-proof}) where we replace $\fhardRot$ with 
$f_U$ and $x\ind{t}$ with $\rho(x\ind{t}/\sigma)$. By 
Lemma~\ref{lem:fullder-rand-slow-pu} we have that for every $t\le\T$ the 
event $G_{\le t}$ implies  $|\<u\ind{\T},\rho(x\ind{s} / \sigma)\>| < 1/2$ for all 
$s\le t$, and therefore by the claim~\eqref{eq:fullder-distance-rhoequiv} we 
have that Lemma~\ref{lem:fullder-rand-slow-deterministic} holds (as we may 
replace the terms $\hhard(U^{\top} x\ind{s}/D)$, $s<t$, with 0 whenever 
$G_{<t}$ holds). The rest of the proof of 
Lemma~\ref{lem:fullder-rand-slow-pu} proceeds unchanged and gives us that 
with probability
greater than $1/2$ (over any randomness in $\alg$ and the uniform choice of
$U$),
\begin{equation*}
\absinner{u\ind{T}}{\rho (x\ind{t}/\sigma)} < \half
~~ \mbox{for all} ~
t \le \T.
\end{equation*}
By claim~\eqref{eq:fullder-distance-rhoequiv}, this implies $\grad
\hhard(U^{\top} x\ind{t}/D)=0$, and by Lemma~\ref{lem:fullder-rand-hard},  
$\norms{\grad \fhardBound(x\ind{t}/\sigma)} > 1/2$. Thus, after
scaling,
\begin{equation*}
  \norm{\grad f_U(x\ind{t})} > \frac{\Smp \sigma^p}{2\smC{p}'}
\end{equation*}
for all $t\le \T$, with probability greater that $1/2$. 
As in the proof of Theorem~\ref{thm:fullder-final}, By taking $\sigma =
(2\smC{p}'\epsilon/\Smp)^{1/p}$ we guarantee
\begin{equation*}
\inf_{\alg\in\AlgDet}\sup_{U\in\orthogonalgroup(d,\T)}\TimeEps{\alg}{f_U} 
\ge 1+T.
\end{equation*}
where $\T = \floor{D^{p + 1} / 13 \sigma^{p + 1}}$ is defined in
Eq.~\eqref{eqn:T-choice-to-distance}. Thus, as $f_U \in \FclassD{p}$ for our 
choice of $\T$, we  immediately obtain
\begin{equation*}
  \CompEps{\AlgRand}{\FclassD{p}}
  \ge \T + 1
  \ge 
  \frac{D^{1+p}}{52}\left( \frac{\Smp}{\smC{p}'}\right)^{\frac{1+p}{p}}
  \epsilon^{-\frac{1+p}{p}},%
\end{equation*}
as long as our initial assumption $\sigma \le D$ holds.  
When $\sigma > D$, we have
that $\frac{2 \smC{p}'}{\Smp} \epsilon > D^p$,
or $1 > D^{p + 1} (\frac{\Smp}{2 \smC{p}'})^\frac{1 + p}{p}
\epsilon^{-\frac{1 + p}{p}}$, so that the bound
is vacuous in this case regardless: every method must take at least
$1$ step.

Finally, we return to demonstrate
claim~\eqref{eq:fullder-distance-rhoequiv}.  Note that
$\absinner{u\ind{\T}}{\rho (x/\sigma)} < 1/2$ is equivalent to
$\absinner{u\ind{\T}}{x} < \frac{\sigma}{2} \sqrt{1+\norms{\frac{x}{\sigma
      R}}^2}$, and consider separately the cases $\norm{x/\sigma} \le R/2$
and $\norm{x/\sigma} > R/2 = 115\sqrt{T}$. In the first case, we have
$\absinner{u\ind{\T}}{x} < (\sqrt{5}/4)\sigma < (3/5)D$, by our assumption
$\sigma \le D$. Therefore, by
Lemma~\ref{lem:fullder-hhard-props}.\ref{item:fullder-hhard-radius} we have
that $\hhard(U^{\top} y/D) = 0$ for $y$ near $x$. In the second case, we have
$\norm{x} > (4R/\sqrt{5})\absinner{u\ind{\T}}{x} > 230
\absinner{u\ind{\T}}{x}$. If in addition $\absinner{u\ind{\T}}{x} < (3/5)D$
then our conclusion follows as before. Otherwise, $\norm{x}/D >
230\cdot(3/5) > 1$, so again the conclusion follows by
Lemma~\ref{lem:fullder-hhard-props}.\ref{item:fullder-hhard-radius}.

\end{document}